\theoremstyle{definition}
\newtheorem{definition}{Definition}[section]
\newtheorem{proposition}[definition]{Proposition}
\newtheorem{lemma}[definition]{Lemma}
\newtheorem{theorem}[definition]{Theorem}
\newtheorem{corollary}[definition]{Corollary}
\newtheorem{remark}[definition]{Remark}
\newtheorem{example}[definition]{Example}
\newtheorem{construction}[definition]{Construction}
\newcommand{\bbD}{\mathbb{D}}
\newcommand{\bbN}{\mathbb{N}}
\newcommand{\bbZ}{\mathbb{Z}}
\newcommand{\bbR}{\mathbb{R}}
\newcommand{\KA}{\mathcal{A}}
\newcommand{\KB}{\mathcal{B}}
\newcommand{\KR}{\mathcal{R}}
\newcommand{\KT}{\mathcal{T}}
\newcommand{\KP}{\mathcal{P}}
\newcommand{\KM}{\mathcal{M}}
\newcommand{\KC}{\mathcal{C}}
\newcommand{\KE}{\mathcal{E}}
\newcommand{\KF}{\mathcal{F}}
\newcommand{\KV}{\mathcal{V}}
\newcommand{\SL}{\mathscr{L}}
\newcommand{\SX}{\mathscr{X}}
\newcommand{\SH}{\mathscr{H}}
\newcommand{\fX}{\mathfrak{X}}
\newcommand{\fU}{\mathfrak{U}}
\newcommand{\fL}{\mathfrak{L}}
\newcommand{\sbullet}{{\scriptscriptstyle\bullet}}
\DeclareMathOperator{\spec}{Spec}
\DeclareMathOperator{\proj}{Proj}
\DeclareMathOperator{\dist}{dist}
\DeclareMathOperator{\an}{\mathrm{an}}
\DeclareMathOperator{\FS}{\mathrm{FS}}
\DeclareMathOperator{\Sp}{\mathrm{sp}}
\DeclareMathOperator{\ddc}{\mathrm{dd}^{c}}
\DeclareMathOperator{\MA}{\mathrm{MA}}
\DeclareMathOperator{\V}{\mathbf{V}}
\DeclareMathOperator{\Cone}{\mathbf{C}}
\DeclareMathOperator{\Div}{\mathrm{div}}
\DeclareMathOperator{\vol}{\mathrm{vol}}
\DeclareMathOperator{\red}{\mathrm{red}}
\DeclareMathOperator{\Sh}{\Check{S}}
\DeclareMathOperator{\Shbd}{\partial_{\mathrm{Sh}}}
\DeclareMathOperator{\bir}{\mathrm{bir}}
\DeclareMathOperator{\supp}{\mathrm{supp}}
\DeclareMathOperator{\Prob}{\mathrm{Prob}}
\DeclareMathOperator{\fin}{\mathrm{fin}}
\DeclareMathOperator{\eq}{\mathrm{eq}}
\DeclareMathOperator{\Min}{\mathrm{Min}}
\DeclareMathOperator{\Frac}{\mathrm{Frac}}
\newcommand{\ndot}{\mathord{\cdot}}
\DeclarePairedDelimiter{\norm}{\lVert}{\rVert}
\DeclarePairedDelimiter{\abs}{\lvert}{\rvert}
\DeclarePairedDelimiter{\algnorm}{\vvvert}{\vvvert}
\author{Yanbo FANG}
\begin{document}
\title{Metrised ample line bundles in non-Archimedean geometry II}

\maketitle


\section{Introduction}

\subsection{Arithmetic Hilbert-Samuel function over a local place}
We continue our study of semipositive metrics $\phi$ on an ample line bundle $L$ of an irreducible projective variety $X$ of dimension $d$ over a non-archimedean (n-A) field $(k,\abs{\ndot})$ in \cite{Fan1}, from the point of view of its Banach algebra of normed sections $\KR(L,\phi)$. Recall that we denote by 
\begin{itemize}
    \item $R_n(L)$ the vector space $H^0(X, L^n)$; $R_{\sbullet}(L)$ the algebra $\oplus_{n\in\bbN} H^0(X, L^n)$;
    \item $\chi(n)$ the geometric Hilbert-Samuel (HS) function $n\mapsto \dim R_n(L)$;
    \item $\norm{\ndot}_{n\phi}$ the sup norm induced by $\phi$ on $R_n(L)$ for every $n\in\bbZ$, and $\algnorm{\ndot}_{\phi}$ the algebra norm on $R_{\sbullet}(L)$ as the orthogonal sum $\obot_{n\in\mathbb{N}}\norm{\ndot}_{n\phi}$;
    \item $\widehat{\chi}(n)$ the arithmetic HS function (over the local place $(k,\abs{\ndot})$), defined as the map $n\mapsto \det(R_n(L),\norm{\ndot}_{n\phi})$ valued in normed lines over the base valued field.
\end{itemize}


The geometric HS formula computes the leading term of $\chi(n)$ (normalised by $\frac{n^d}{d!}$) in the asymptotic expansion for $n\gg 1$ to be the intersection number $c_1(L)^n$. The classical strategy is to use an induction by dimension argument, comparing the difference of $\chi(n)$ for terms in the following exact sequence induced by multiplication by a global section $s\in R_1(L)$ and restriction to its divisor $V(s)$: 
\[0\rightarrow R_n(L)\xrightarrow{\cdot s}R_{n+1}(L)\xrightarrow{\ndot|_{V(s)}}R_{n+1}(L|_{V(s)})\rightarrow 0.\]

We aim to study the asymptotic behavior of the function $\widehat{\chi}(n)$; mimicking the classical strategy, one would like to evaluate its change in the sequence of normed vector spaces
\[(R_n(L),\norm{\ndot}_{n\phi})\xrightarrow{\cdot s}(R_{n+1}(L),\norm{\ndot}_{(n+1)\phi})\xrightarrow{\cdot|_{V(s)}}(R_{n+1}(L|_{V(s)}),\norm{\ndot}_{(n+1)\phi|_{V(s)}}).\]
This sequence is no longer `exact' with respect to norms on them, since the linear maps are not necessarily isometries. Metrisation leads to secondary/arithmetic invariants and our arithmetic HS function is of this nature. Under the identification of the underlying lines $\det R_{n+1}(L)\simeq\det R_{n}(L)\otimes \det R_{n+1}(L|_{V(s)})$, the change of $\widehat{\chi}(n)$ in this sequence is the distorsion of determinant norms
\[\tau_n(L,\phi; s):=\frac{\det\norm{\ndot}_{(n+1)\phi}}{\det\norm{\ndot}_{n\phi}\det\norm{\ndot}_{(n+1)\phi|_{\Div(s)}}}\in\bbR.\]
To estimate it, decompose the distorsion into an equidistribution part and an extension part; both parts can be expressed as operator norms (denoted by $\norm{\ndot}_{\hom}$) naturally induced by $\phi$ 
\[\tau_n(L,\phi;s)=\norm{\bigwedge^{\chi(n)}(\cdot s)}_{\hom,\phi}\cdot \norm{\bigwedge^{\chi_{V(s)}(n+1)}(\ndot|_{V(s)})}_{\hom,\phi}^{-1}.\]

We've estimated in \cite{Fan1} that the second term has an negligible asymptotic contribution in the limit. This article studies the limit of the first term. 

\subsection{Asymptotic formula and equidistribution measure}
Results in special cases or in similar forms are obtained previously (see next part) by other techniques, here we take a direct \emph{non-archimedean analytic} approach to evaluate the normalised limit of $\frac{1}{\chi(n)}\tau_n(L,\phi)$. 

We begin with a class of metrics which we call \emph{Shilov finite}: the Shilov boundary $\Shbd\KR(L,\phi)$ (of its Berkovich spectrum) consists of finitely many birational points. This class includes all semipositive model metrics, but also potentially more, since Shilov finite metric could possibly be singular.

First we can calculate the desired limit from the Banach algebra $\KR(L,\phi)$ using norm reduction techniques in the case of Shilov finite metrics. Expressed on the `disc domain' $\KM(\KR(L,\phi))$, the limit also takes the form of the pairing of the function $\log\abs{s}$ with a probability measure $\mu_{\eq}(\phi)$ which we call the \emph{equidistribution measure}. The measure takes the form
\[\mu_{\eq}(\phi):=\sum_{a\in \Shbd\KR(L,\phi)}\lambda_a\delta_a,\ \lambda_a:=\lim_{n\to\infty}\dim_{\widetilde{k}}[\pi_a\KR(L,\phi)]^{\sim}_n/\dim_{\widetilde{k}}[\KR(L,\phi)]^{\sim}_n\]
where $\pi_a\KR(L,\phi)$ is an analytic localization of $\KR(L,\phi)$ at $a$, and the $\widetilde{k}$-vector spaces $[-]^{\sim}$ are reductions of corresponding normed $k$-vector spaces. The equidistribution measure (pushed-forward to $X^{\an}$ by the natural projection map) is expected to coincide with the Monge-Ampère measure $\MA(\phi)$ if the metric is continuous. 
\begin{theorem}[\ref{T: equidistribution limit}]
    Let $\phi$ be a metric which is Shilov finite. Then it holds
    \[\lim_{n\to\infty}\frac{1}{\chi(n)}\log~ \norm{\bigwedge^{\chi(n)}(\cdot s)}_{\hom,\phi}=\int_{\Cone(L)^{\an}}\log\abs{s}\mu_{\eq}(\phi).\]
\end{theorem}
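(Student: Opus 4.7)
The plan is to reduce the operator-norm computation to a Hilbert--Samuel type dimension count in the graded reduction $\tilde\KR:=\bigoplus_n[R_n(L),\norm{\cdot}_{n\phi}]^{\sim}$ over the residue field $\tilde k$, and then invoke Shilov finiteness to split this count into contributions from the finitely many Shilov boundary points.

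First, express the operator norm via orthogonal bases. For each $n$, choose an $\epsilon$-orthogonal basis $(e_i)_{i=1}^{\chi(n)}$ of $(R_n(L),\norm{\cdot}_{n\phi})$ with $\norm{e_i}_{n\phi}=1$. In the non-Archimedean setting the wedge norm is (asymptotically) multiplicative on such a basis, yielding
\[\log\norm{\bigwedge^{\chi(n)}(\cdot s)}_{\hom,\phi}=\log\norm{(se_1)\wedge\cdots\wedge(se_{\chi(n)})}_{(n+1)\phi}+o(\chi(n)).\]
The reductions $\bar s\cdot\bar e_i$ form a family in $\tilde\KR_{n+1}$, and the log of the operator norm is governed by how much this family fails to be $\tilde k$-linearly independent, i.e.\ by a dimension defect in the graded reduction.

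Next, exploit Shilov finiteness. It guarantees that $\tilde\KR$ has only finitely many homogeneous minimal primes $\mathfrak{p}_{a_1},\dots,\mathfrak{p}_{a_r}$, indexed by the birational Shilov points. Each analytic localization $\pi_a\KR(L,\phi)$ is the normed analogue of $\tilde\KR/\mathfrak{p}_a$, and by definition its graded dimensions satisfy $\dim_{\tilde k}[\pi_a\KR]_n^{\sim}=\lambda_a\chi(n)+o(\chi(n))$, with $\sum_a\lambda_a=1$. At a birational Shilov point $a$, the seminorm $\abs{\cdot}_a$ is multiplicative, so multiplication by $s$ on the $a$-localization acts as exact scalar multiplication by $\abs{s(a)}$. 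Its contribution to the log operator norm of $\bigwedge^{\chi(n)}(\cdot s)$ is therefore $\dim_{\tilde k}[\pi_a\KR]_n^{\sim}\cdot\log\abs{s(a)}$. Summing over all Shilov points, dividing by $\chi(n)$, and letting $n\to\infty$ gives $\sum_a\lambda_a\log\abs{s(a)}=\int_{\Cone(L)^{\an}}\log\abs{s}\,\mu_{\eq}(\phi)$.

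The main obstacle is making the decomposition of the operator norm along Shilov points rigorous. One needs to verify that non-Shilov parts of the Berkovich spectrum contribute only $o(\chi(n))$ to the log-norm, and that the localizations $\pi_a\KR$ interact with the grading compatibly so that the local dimension counts assemble to $\chi(n)$ at leading order (in particular, that the $\lambda_a$ form a probability measure). Shilov finiteness is precisely the hypothesis that makes this asymptotic factorization exact, since it confines the relevant multiplicative seminorms to a finite set on which the birational property makes $\cdot s$ act as a single scalar.
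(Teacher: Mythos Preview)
Your overall strategy matches the paper's: decompose along the finitely many Shilov points, use multiplicativity of $\abs{\cdot}_a$ at each birational point to see $(\cdot s)$ as scalar multiplication, and argue the error in passing to the decomposition is $o(\chi(n))$. Your final paragraph also correctly names the obstacle and the ingredients needed (that $\sum_a\lambda_a=1$, and that the non-Shilov contribution is negligible).

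Where your sketch diverges from the paper, and is in fact imprecise, is the mechanism in the first paragraph. The ``dimension defect'' of the family $\bar s\cdot\bar e_i$ in $\widetilde{\KR}_{n+1}$ does not compute the log determinant norm: those reductions are only nonzero when $\norm{se_i}_{(n+1)\phi}=1$, and knowing which ones vanish only bounds the norm from above; what you actually need is the value of each $\norm{se_i}_{(n+1)\phi}$, which depends on \emph{which} Shilov point $e_i$ concentrates at. The paper does not compute in $\widetilde{\KR}$ directly. Instead it builds, for each $a\in\Shbd$, a graded \emph{contraction subspace} $\KC_a$ of the valued field $(\widehat{\kappa(a)},\abs{\cdot}_a)$ together with an isometric injection $\kappa:\KR(L,\phi)\hookrightarrow\obot_a\KC_a$ (Construction~\ref{Cs: contraction subspace and map}, Proposition~\ref{P: compression section algebra}). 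On each $\KC_a$ the determinant norm of $(\cdot s)$ is \emph{exactly} $\abs{s}_a^{\chi_a(n)}$ by Proposition~\ref{P: VF extension operator norm}, so the determinant norm on the direct sum is $\prod_a\abs{s}_a^{\chi_a(n)}$ on the nose. The passage from $\KR_n$ to $\obot_a(\KC_a)_n$ costs codimension $O(n^d)$ by Proposition~\ref{P: localization section algebra HS}, and the resulting error on $\log\norm{\bigwedge^{\chi(n)}(\cdot s)}_{\hom}$ is bounded via Proposition~\ref{P: det norm distorsion subspace}, with the required uniform constant $C$ on $\norm{(\cdot s)^{\pm1}}_{\hom}$ coming from Proposition~\ref{P: inverse of multiplication bounded}. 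Thus the paper replaces your reduction-and-defect heuristic by an explicit isometric embedding into a space where the computation is exact, and then controls the codimension; this is precisely what your last paragraph asks for but does not supply.
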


Then by an approximation argument one extends the result to the case of semipositive continuous metrics:
\begin{corollary}[\ref{T: equidistribution semipositive}]
    Let $\phi$ be a continuous semipositive metric. Then it holds
    \[\lim_{n\to\infty}\frac{1}{\chi(n)}\log~ \norm{\bigwedge^{\chi(n)}(\cdot s)}_{\hom,\phi}=\int_{X^{\an}}\log\abs{s}_{\phi}\MA(\phi).\]
\end{corollary}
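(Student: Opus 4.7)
The plan is to reduce the general continuous semipositive case to the Shilov-finite case of Theorem~\ref{T: equidistribution limit} by uniformly approximating $\phi$ by semipositive model metrics.

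\textbf{Reduction to model metrics.} By the density of semipositive model metrics in continuous semipositive metrics under the sup topology (Boucksom--Favre--Jonsson, Gubler--K\"unnemann), choose a sequence of semipositive model metrics $\phi_j$ with $\norm{\phi-\phi_j}_\infty\to 0$. Each $\phi_j$ is Shilov finite, so Theorem~\ref{T: equidistribution limit} gives
\[
\lim_{n\to\infty}\frac{1}{\chi(n)}\log\norm{\bigwedge^{\chi(n)}(\cdot s)}_{\hom,\phi_j} = \int_{\Cone(L)^{\an}}\log\abs{s}\,\mu_{\eq}(\phi_j).
\]
For a semipositive model metric the equidistribution measure $\mu_\eq(\phi_j)$ is a finite weighted sum of Dirac masses at divisorial points of the model, whose pushforward to $X^{\an}$ is the Chambert--Loir Monge--Amp\`ere measure $\MA(\phi_j)$ (the identification anticipated after Theorem~\ref{T: equidistribution limit}, provable explicitly in the model case by matching weights of special-fibre components). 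Consequently the cone integral rewrites as $\int_{X^{\an}}\log|s|_{\phi_j}\MA(\phi_j)$.

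\textbf{Convergence on the right.} Continuity of the non-Archimedean Monge--Amp\`ere operator in the sup topology on continuous semipositive metrics (Chambert--Loir, Gubler) gives $\MA(\phi_j)\rightharpoonup\MA(\phi)$ weakly. Since $\log|s|_{\phi_j}-\log|s|_\phi=\phi-\phi_j\to 0$ uniformly and the logarithmic singularity of $\log|s|_\phi$ along $\{s=0\}$ is uniformly integrable against the sequence $\MA(\phi_j)$ (by a standard Dini-type truncation using upper boundedness), one obtains
\[
\int_{X^{\an}}\log|s|_{\phi_j}\MA(\phi_j)\longrightarrow \int_{X^{\an}}\log|s|_\phi\MA(\phi).
\]

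\textbf{Convergence on the left (main obstacle).} Setting $F_n(\psi):=\chi(n)^{-1}\log\norm{\bigwedge^{\chi(n)}(\cdot s)}_{\hom,\psi}$, one must commute $\lim_n$ and $\lim_j$. The difficulty is that a direct factor-by-factor sup-norm perturbation yields only the crude estimate $|F_n(\phi)-F_n(\phi_j)|=O\bigl(n\,\norm{\phi-\phi_j}_\infty\bigr)$, insufficient to commute the limits. The required sharp uniform-in-$n$ bound $|F_n(\phi)-F_n(\phi_j)|\leq C\norm{\phi-\phi_j}_\infty+o_n(1)$ should be obtained by viewing $F_n(\psi)$ as the empirical mean $\chi(n)^{-1}\sum_i\log\sigma_i(\psi)$ of the singular values of the multiplication map $\cdot s$ with respect to $\psi$-sup norms, and showing that as $n\to\infty$ the empirical distribution of the $\log\sigma_i(\psi)$ concentrates on the pushforward of $\MA(\psi)$ by $\log|s|_\psi$ — a Bernstein--Markov-type statement that, in the present non-Archimedean setting, should follow from the Shilov concentration already underlying Theorem~\ref{T: equidistribution limit} (where the operator norm is localized to the Shilov points of $\KR(L,\phi)$). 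A uniform perturbation of $\psi$ by $\norm{\phi-\phi_j}_\infty$ shifts this limiting distribution by the same amount, providing the uniform-in-$n$ bound. A standard $\epsilon/3$-diagonalization then commutes the $n$- and $j$-limits and identifies $\lim_n F_n(\phi)$ with $\int_{X^{\an}}\log|s|_\phi\MA(\phi)$.
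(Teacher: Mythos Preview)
Your overall strategy matches the paper's: approximate $\phi$ uniformly by continuous semipositive Shilov finite metrics (model metrics), invoke Theorem~\ref{T: equidistribution limit} together with the identification $\mu_{\eq}=\MA^{\uparrow}$ (which the paper isolates as Theorem~\ref{T: eq=MA for Shilov finite}), and pass to the limit on both sides using the weak continuity of $\MA$ (Construction~\ref{Cs: MA measure}, Proposition~\ref{P: lifted MA measure}). Your treatment of the right-hand side is fine.

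For the left-hand side, you correctly flag the commutation of $\lim_n$ and $\lim_j$ as the crux. The paper's tool here is Proposition~\ref{P: continuity of equidistribution measure}: it applies Proposition~\ref{P: operator norm changes} directly to the determinant norms and obtains the Lipschitz bound $d(\mu_{\eq}(\phi_1),\mu_{\eq}(\phi_2))\leq 2\,d_1(\algnorm{\ndot}_{\phi_1},\algnorm{\ndot}_{\phi_2})$ for Shilov finite $\phi_1,\phi_2$, with $d_1\leq d_\infty$. No spectral concentration or Bernstein--Markov argument is invoked; Construction~\ref{Cs: equidistribution measure linear form} then uses this Lipschitz estimate to extend $L_\phi$ to all semipositive metrics. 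This is the precise replacement for your Bernstein--Markov sketch.

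That said, your worry about the crude finite-$n$ bound $|F_n(\phi)-F_n(\phi_j)|=O(n\norm{\phi-\phi_j}_\infty)$ is well-founded: that is exactly what Proposition~\ref{P: operator norm changes} gives before taking $n\to\infty$, so the \emph{existence} of $\lim_n F_n(\phi)$ for a non-Shilov-finite $\phi$ is not a purely formal consequence of the $d_1$-continuity of the already-computed limits $L_{\phi_j}$. The paper simply asserts this existence in Construction~\ref{Cs: equidistribution measure linear form} (writing $L_\phi(\log|s|):=\lim_n F_n(\phi)$) without further argument. So your proposal and the paper take the same route; the paper's continuity lemma is sharper and cleaner than your sketch, but the specific gap you name --- that the diagonal limit exists for general continuous semipositive $\phi$ --- is not fully closed by the paper's text either, and your proposed resolution remains heuristic.
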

We view these results as an arithmetic Hilbert-Samuel formula over the local place $(k,\abs{\ndot})$.

A natural class of Shilov finite metrics are those whose disc domain is a finite union of affinoid domains; if the metrics are assumed to be continuous, by a result of \cite{GK} they turn out to coincide with the class of piecewise linear (semipositive) metrics (Proposition \ref{P: piecewise-linear is equivalent to Stein-Liu}). For general Shilov finite metrics, we show some continuity properties of their equidistribution measures (Proposition \ref{P: surjectivity of equidistribution measure coefficients}) once their supports (projected to $X^{\an}$) is contained in a given finite set; this allows one to solve the equation for Shilov finite metrics with prescribed finite equidistribution measure (Proposition \ref{P: surjectivity of equidistribution measure coefficients}). 

Our results suggest that the study of non-Archimedean Monge-Ampère measure and related equation for semipositive metrics, as developpped in works such as \cite{CLD}\cite{GK} and \cite{BFJ2} \cite{BGJKM}\cite{BGM}, could be decoupled into two steps, each with a more analytic flavor:
\begin{itemize}
    \item first, focus on the equidistribution measure using purely Banach-algebra techniques, which employs its intrinsic integral model; certain properties such as differentiability of the relative volume and orthogonality property for the support of MA measure have counterparts and can be understood more intuitively in this way;
    \item then, to compare this measure to the genuine n-A MA measure, one needs the continuity property of the envelope metric.
\end{itemize}
This article only takes part of the first step. 

\subsection{Relations to other works}
\subsubsection{Complex analytic prototype and global arithmetic HS formula}
In the complex analytic setting, such an equidistribution identity is proved in \cite{AB} in their study of arithmetic HS function over number fields, where the determinant norm distorsion serves as the contribution to the arithmetic intersection number $\widehat{c}_1(\SL,\phi)^{d+1}$ from Archimedean places; for smooth strictly positive metrics, the equidistribution term is decoupled using peak sections adapted to a suitable dissection of the manifold $X^{\an}$ into a number of $\chi(n)$ small pieces; the construction of such sections relies on (analytic) localization techniques concerning spectral theory of $\overline{\partial}$-Laplacian and Bergman kernel estimates. 

The goal of \cite{AB} was to prove a simplified version to the arithmetic HS formula over a number field, originally established by Gillet and Soulé, as a consequence of their powerful arithmetic Riemann-Roch formula \cite{GS}. Let $K$ be a number field and let $(X,L)$ defined over $K$. For ample model line bundle of $L$ over $\mathfrak{o}_K$, equipped with a Hermitian metric $\phi_{\infty}$ of positive curvature on complex fibers, Gillet and Soulé's formula reads (stated in an adelic way, in a simplified version \footnote{In the sense that only the dominant term is kept; the original formula contains more terms in the asymptotic expansion with respect to $n$})
\[\frac{d!}{n^d}\log(\mathrm{vol}(\mathbb{B}(L^{\otimes n},\{n\phi_v\}))/\mathrm{covol}(H^0(L^{\otimes n})))\xrightarrow{n\to\infty}\widehat{c}_1(L,\{\phi_v\})^{d+1} \]
where one considers the adelic space $\mathbb{A}_k\otimes_k H^0(X,L^{\otimes n})$, together with non-Archimedean norms and metrics $\phi_v$ induced by the ample model, the archimedean metric $\phi_{\infty}$, and $\mathbb{B}$ the unit ball in the adelic section space with respect to sup norms from $\{\phi_v\}$ for all places $v$.

This version was extended to semipositive adelic metrics on ample line bundles \cite{Zha}: a semipositive adelic metric on $L$ consists of metrics $\phi_v$ on $L_v$ for each place $v$ of $K$ as described above, but with exceptions on finitely many non-archimedean places $v$ where $\phi_v$ could be a uniform limit of ample model metrics. Our result provides the non-Archimedean local piece for this version of arithmetic HS formula. 

\subsubsection{Non-archimedean equidistribution for model metrics} In the non-archimedean setting, we start with Shilov finite metrics (which include all integral/formal model metrics), and a weak (analytic) localization is realized using norm reduction and HS formula on the reduction section algebra over the residual field; these techniques are specific to the ultrametric world. Note that reduction only yields peak sections in a weak sense: they from a n-A orthogonal basis, with their pics localized to the support of the corresponding equidistribution measure, however each point in the support comes with the residual H-S multiplicity.   



For a metric $\phi$ coming from an ample integral model ($\simeq$ n-A Fubini-Study metric), the above formula has been obtained in \cite{Mor} using relative asymptotic Riemann-Roch formula on the integral model over integral ring. One can reformulate this proof in terms of affinoid algebras similar to certain calculation from \cite{Gub}, see the Appendix B. Our result grows out of an attempt to give an analytic generalisation of that, mimicking the argument by non-archimedean norm reduction in \cite[\S 6.6, 6.9]{CLD}, to include more metrics.

\subsubsection{Non-Archimedean differentiability of relative volume} Note that in litteratures such as \cite{CL}\cite{KT}\cite{CMc}\cite{BE}\cite{BGM}, a related but different distorsion, the relative volume $\vol(L,\phi,\phi+tf)$ of two sup norms corresponding to a continuous semipositive metric $\phi$ and its (small) perturbation by a continuous function $f$ has been thoroughly studied, the key outcome is a differentiability result 
\[ \frac{d}{dt}|_{t=0}\lim\frac{1}{\chi(n)}\log\frac{\det\norm{\ndot}_{n(\phi+tf)}}{\det\norm{\ndot}_{n\phi}}=\int_{X^{\an}}\log\abs{f}\MA(\phi)\]
where $\MA(\phi)$ is the non-Archimedean Monge-Ampère measure constructed in \cite{CL} and in \cite{CLD}\cite{GK}. This limit of relative volume does not directly give the limit of distorsion even if we are allowed to take $f=\log\abs{s}_{\phi}$ using approximations, because the limit procedures are different: our limit for $\tau_n(L,\phi)$ is a diagonal version of the two parameter limit for $\vol_n(L,\phi,\phi+tf)$. To get ours from theirs, some uniformity is needed. The passage has been recently established in \cite{Sed}, proving a more general formula, combining the result of differentiability of relative volume in \cite{BGM} and a method using Okunkov bodies. Our direct proof is independent of these techniques.

We'd also like to point out that a subclass of Shilov finite metrics (over a trivially valued field) has appeared in a recent study of n-A approach to K-stability for polarized varieties \cite{BJ2}: it consists of semipositive psh metrics whose Monge-Ampère measure is supported on a finite set of \emph{divisorial} points. It is shown in \emph{loc.cit.} that it suffices to test K-stability of a polarized variety via the $\beta$-invariant only through `generalized test configurations' that corresponds to such metrics.

\subsubsection{Another approach for global HS formula} Over a number ring, the equidistribution limit is calculated in \cite{Ni}, relying on a study of deformation to the normal cone construction for arithmetic line bundles, combined with normed Newton-Okunkov body techniques. The normed extension property for sections has been established earlier in \cite{Cha}. They directly work in a global setting, and their results also combine to give the arithmetic HS formula. We remark that both our approach over a local base and \emph{loc.~cit.} over a global base consider section algebras equipped with norms, and they are all inspired by the application of Grauert's construction to Arakelov geometry from \cite{Bos}.

\subsection{Conventions and notations}
Throughout the article, for simplicity we assume $\abs{k^{\times}}=\bbR_+$ and $k$ maximally complete, hence all norms are strict and with orthonormal basis; all norms are ultrametric; all Banach algebras (in calligraphic symbols such as $\KA$, $\KB$, $\KR$) are uniform, namely equipped with the spectral sup norm; they are usually obtained as the completion of some $k$-algebra (in Roman symbols such as $A$, $B$, $R$) with respect to an algebra norm $\algnorm{\ndot}$ (i.e. sub-multiplicative norm) which is power-multiplicative. As the line bundle $L$ is fixed, we'll sometimes omit it in the notations for section spaces and the section algebra, as $R_n$ and $R$.

\section{Banach spaces and Banach algebras}

\subsection{Banach spaces}

Recall that for a normed $k$-vector space $(E,\norm{\ndot})$, the closed and open unit balls are denoted by $E^{\circ}$ and $E^{\circ\circ}$, the unit sphere is denoted by $E^{\circ\setminus\circ\circ}$, the reduction $\widetilde{k}$-vector space is denoted by $\widetilde{E}$ (or $[E]^{\sim}$). For any subset $A$ of $E^{\circ}$, denote by $\widetilde{A}$ (or $[A]^{\sim}$) the subset $A/(A\cap E^{\circ\circ})$ of $\widetilde{E}$.

Recall that a finite set of vectors $\{e_i\}_{i\in I}$ in $E$ is \emph{orthogonal} (resp.\emph{orthonormal}) if $\norm{\sum_{i\in I} a_ie_i}=\max_{i\in I}\{\abs{a_i}\norm{e_i}\}$ for any coefficient $a_i\in k$ (resp. and $\norm{e_i}=1$) for any $i$. A finite set of vectors $\{e_i\}$ with $\norm{e_i}=1$ is orthonormal if and only if $\{\widetilde{e_i}\}_{i\in I}$ is $\widetilde{k}$-linearly independent. Since $(k,\abs{\ndot})$ is assumed to be spherically complete, any ultrametric Banach space $\KE$ is Cartesian; in particular, orthogonal projection exists for any closed subspace; since $\abs{k^{\times}}=\bbR_+$, any closed subspace possesses an orthonormal basis. (\cite[\S1-2]{BGR})

Recall that the complete tensor product $\KE\widehat{\otimes}\KF$ of two Banach spaces over a complete ultrametric valued field $(k,\abs{\ndot})$ (Banach field) is the completion of $\KE\otimes \KF$ with respect to the norm $\inf\max\{\norm{\ndot}_{\KE}\cdot\norm{\ndot}_{\KF}\}$ running over all possible presentations of a tensor ($\inf$) and over all its components ($\max$). Let $(l,\abs{\ndot}_l)$ be another Banach field, extension of $(k,\abs{\ndot})$. Then the operator norm of any linear operator $T: \KE\to \KF$ remains unchanged after base change to $T\otimes 1: \KE\widehat{\otimes}l\to \KF\widehat{\otimes}l$ (\cite{BGR}).

\begin{proposition}\label{P: VF extension operator norm}
Let $(k,\abs{\ndot})$ be a n-A Banach field and $(l,\abs{\ndot}_l)$ be a valued extension of it. Let $E$ be a $k$-linear subspace of $l$, of finite dimension $d$. Let $s\in l^{\times}$ be a non-zero element. Consider the linear map $(\cdot s)|_E$ from $E$ to $sE$, equip these two vector spaces with norms induced by $\abs{\ndot}_l$. Then the operator norm of $\bigwedge^d (\cdot s)|_E$ is equal to $(\abs{s}_l)^d$. 
\end{proposition}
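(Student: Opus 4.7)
The plan is to exploit the fact that multiplication by $s$ on the ambient field $l$ scales $\abs{\cdot}_l$ uniformly by $\abs{s}_l$: the map $\cdot s : (E,\abs{\cdot}_l) \to (sE,\abs{\cdot}_l)$ is therefore a $k$-linear similitude with factor $\abs{s}_l$. The claim then reduces to showing that a similitude with factor $\lambda$ between two $d$-dimensional $k$-normed spaces induces an operator of norm $\lambda^d$ on the top exterior powers, which in the ultrametric setting is a clean computation via orthonormal bases.

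To execute this, I would first choose an orthonormal basis $(e_1,\ldots,e_d)$ of $E$; this exists by the standing assumptions of the paper ($k$ maximally complete with $\abs{k^\times} = \bbR_+$, $E$ finite-dimensional). Next, using $\abs{k^\times} = \bbR_+$ again, I fix $c \in k^\times$ with $\abs{c} = \abs{s}_l$ and set $f_i := c^{-1} s \, e_i \in sE$. One then checks that $(f_i)$ is an orthonormal basis of $sE$: clearly $\abs{f_i}_l = \abs{c^{-1}s}_l \cdot \abs{e_i}_l = 1$, and for any $(a_i) \in k^d$,
\[
\Bigl| \sum_i a_i f_i \Bigr|_l \;=\; \abs{c^{-1} s}_l \cdot \Bigl| \sum_i a_i e_i \Bigr|_l \;=\; \max_i \abs{a_i},
\]
the last equality by orthonormality of $(e_i)$ in $E$. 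Hence $e_1 \wedge \cdots \wedge e_d$ and $f_1 \wedge \cdots \wedge f_d$ are unit vectors in $\bigwedge^d E$ and $\bigwedge^d(sE)$ respectively. Finally
\[
\bigwedge^d (\cdot s)\bigl( e_1 \wedge \cdots \wedge e_d \bigr) \;=\; (se_1) \wedge \cdots \wedge (se_d) \;=\; c^d \, f_1 \wedge \cdots \wedge f_d
\]
has determinant norm $\abs{c}^d = \abs{s}_l^d$; since the operator norm between $1$-dimensional normed spaces is the ratio of image to preimage norms, the conclusion follows.

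I do not anticipate a serious obstacle: the argument is essentially that ultrametric exterior powers respect similitudes. The only mildly delicate point is that orthonormality of $(f_i)$ is a condition on $k$-linear combinations, while the underlying norm comes from the extension $\abs{\cdot}_l$ on $l$; but since multiplication by $s \in l$ is $k$-linear and scales $\abs{\cdot}_l$ by the fixed factor $\abs{s}_l$, the $k$-orthonormality of $(e_i)$ transports verbatim to $(f_i)$. One could alternatively avoid introducing $c$ altogether by invoking directly the general identity $\norm{\bigwedge^d T} = \lambda^d$ for any $k$-linear similitude $T$ of factor $\lambda$ between two $d$-dimensional $k$-normed spaces admitting orthonormal bases.
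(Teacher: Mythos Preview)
Your proof is correct and follows essentially the same line as the paper: pick an orthonormal basis of $E$, normalise so that the scaling element has absolute value $1$, and check that the image basis remains orthonormal, whence the determinant norm is computed. The only cosmetic difference is that you verify orthonormality of $(f_i)$ directly from the multiplicativity of $\abs{\cdot}_l$ (the map $\cdot s$ being a similitude), whereas the paper passes through the reduction criterion (linear independence of $\{\widetilde{s\,e_i}\}$ over $\widetilde{k}$) and also sketches an alternative argument by base change to $l$.
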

\begin{proof}
    By rescaling one may assume $\abs{s}_l=1$, thus $\widetilde{s}\in \widetilde{l}$ is a non-zero element, hence the linear map of multiplication by $\widetilde{s}$ in the $\widetilde{k}$-vector space $\widetilde{l}$ is injective.
    
    Take an orthonormal basis $\{e_i\}_{i\in I}$ for $(E,\abs{\ndot}_l)$, then $\{\widetilde{e_i}\}_{i\in I}$ are linearly independent over $\widetilde{k}$; as $\widetilde{s\cdot e_i}=\widetilde{s}\cdot\widetilde{e_i}$ for any $i$ by the multiplicativity of $\abs{\ndot}_l$, the vectors $\{\widetilde{s\cdot e_i}\}_{i\in I}$ are also linearly independent over $\widetilde{k}$ by the injectivity of the map $(\cdot\widetilde{s})$, thus $\{s\cdot e_i\}_{i\in I}$ are orthonormal. Hence thanks to orthogonality, one calculates
    \[\norm{\bigwedge^d (\cdot s)|_E}_{\hom}=\norm{\bigwedge_{i\in I}s\cdot e_i}/\norm{\bigwedge_{i\in I}e_i}=\prod\norm{s\cdot e_i}/\prod\norm{e_i}=1=\abs{s}_l^d,\]
    getting the desired equality.
    Otherwise, one can also proceed by base change: the operator norm of $\bigwedge^d(\cdot s)|_E$ remains the same after analytic base change, equal to the operator norm of $\bigwedge^d [(\cdot s)\otimes 1]|_{E\widehat{\otimes}_kl}: E\widehat{\otimes}_k l \to sE\widehat{\otimes}_kl$, where the multplication operator becomes a scalar multiplication as $s\otimes 1$ can be identified with the scalar $1\otimes s$. This new operator norm is $(\abs{s}_l)^d$.
\end{proof}

\begin{construction}
    Let $E$ be a vector space over $k$, and $\{\norm{\ndot}_i\}_{i\in I}$ be a family of ultrametric norms on $E$. Denote by $\KE_i$ the Banach space of completion of $E$ with respect to $\norm{\ndot}_i$, and by $\iota_i: E\to \KE_i$ and $\iota: E\mapsto\prod_{i}E_i$ the canonical embeddings. The linear subspace $\prod'\KE_i$ of $\prod\KE_i$ consisting of $\prod v_i$ with $\sup_{i\in I}\norm{v_i}_i<\infty$ is a Banach space denoted by $\obot\KE_i$. Then $\sup_{i\in I}\norm{\ndot}_i$ defines a norm, written as $\norm{\ndot}_I$, on the linear subspace $(\prod\iota_i)^{-1}\prod'\KE_i$ of $E$, denoted by $E'_I$, whose unit ball is $E'\cap \bigcap_i \iota_i^{-1}\KE_i^{\circ}$. Denote by $\KE_I$ the Banach space of completion of $(E',\sup_{i\in I}\norm{\cdot}_i)$. 
\end{construction}

\begin{construction}\label{Cs: contraction subspace and map}
    Let $l: E\to F$ be a linear map of normed $k$-vector spaces; suppose $f$ is \emph{contractive} in the sense that $\norm{l(\ndot)}_F\leq\norm{\ndot}_E$. The \emph{contraction of $l(E)$ in $F$} is a linear subspace $F'\subset F$ (\emph{contraction subspace}) together with a contractive linear map $\kappa: l(E)\to F'$ (\emph{contraction map}) such that $\widetilde{\kappa}: [l(E^{\circ})]^{\sim}\rightarrow [F']^{\sim}$ is an isomorphism of $\widetilde{k}$-vector spaces. 
    
    They can be constructed as follows: choose $\widetilde{k}$-basis $\{\widetilde{e_j}\}_{j\in J}$ of $[l(E^{\circ})]^{\sim}$, and their lifts $\{e_j\}$ in $F$, which span $F'$; choose complementary $k$-basis $\{f_i\}_{i\in I}$ that is orthonormal\footnote{here we use the assumption that $k$ is maximally complete} to $\{e_j\}$ in $l(E^{\circ\setminus\circ\circ})\cap F^{\circ\circ}$ for $F'\subset l(E)$, and $\kappa$ is defined by the orthogonal projection $e_j\mapsto \widetilde{e_j}$ and $f_i\mapsto 0$. Note that both $F'$ and $\kappa$ depend on the choice of various $k$-basis, but $\dim_k F'$ is always equal to $\dim_{\widetilde{k}}[l(E^{\circ})]^{\sim}$ independent of choices by construction.
\end{construction}
\begin{proposition}\label{P: join contraction is injective and isometric}
    Let $E$ be a vector space over $k$, and $\{\norm{\ndot}_i\}_{i\in I}$ be a family of ultrametric norms on $E$. Denote by $\KC_i$ a contraction of $\pi_i\iota(\KE_{I})$ in $\KE_i$ with contraction map $\kappa_i$. Then $\kappa=\prod\kappa_i: \KE_{I}\rightarrow\obot_{i\in I}\KC_i$ is injective and isometric.
\end{proposition}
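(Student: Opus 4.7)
The plan is to prove the isometry $\norm{\kappa(v)}_{\obot} = \norm{v}_I$ for every $v\in\KE_I$, from which injectivity then follows automatically. The contractive inequality is immediate: each composition $\kappa_i\circ\pi_i\iota : \KE_I\to\KC_i$ is contractive as a composition of two contractive maps, so
\[\norm{\kappa(v)}_{\obot}=\sup_{i\in I}\norm{\kappa_i(\pi_i\iota(v))}_{\KC_i}\le\sup_{i\in I}\norm{\pi_i\iota(v)}_{\KE_i}=\norm{v}_I.\]

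For the reverse inequality, the key input is the defining isomorphism $\widetilde{\kappa_i}:[\pi_i\iota(\KE_I^{\circ})]^{\sim}\xrightarrow{\sim}[\KC_i]^{\sim}$ of the contraction. From it I would extract the following pointwise statement: if $w\in\pi_i\iota(\KE_I^{\circ})$ satisfies $\norm{w}_{\KE_i}=1$, then $\widetilde{w}$ is a non-zero element of $[\pi_i\iota(\KE_I^{\circ})]^{\sim}\subset\widetilde{\KE_i}$; applying the isomorphism gives $\widetilde{\kappa_i(w)}=\widetilde{\kappa_i}(\widetilde{w})\neq 0$ in $[\KC_i]^{\sim}$, and together with contractivity this forces $\norm{\kappa_i(w)}_{\KC_i}=1$. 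Informally, $\kappa_i$ preserves the norm on those vectors whose $\KE_i$-norm already realizes the full $\KE_I$-norm.

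Given a non-zero $v\in\KE_I$, I would rescale using $\abs{k^{\times}}=\bbR_+$ so that $\norm{v}_I=1$, select an index $i_0\in I$ at which the supremum $\norm{v}_I=\sup_{i\in I}\norm{\pi_i\iota(v)}_{\KE_i}$ is attained, and apply the observation of the previous paragraph to $w=\pi_{i_0}\iota(v)\in\pi_{i_0}\iota(\KE_I^{\circ})$ to conclude $\norm{\kappa_{i_0}(\pi_{i_0}\iota(v))}_{\KC_{i_0}}=1$; this yields $\norm{\kappa(v)}_{\obot}\geq 1=\norm{v}_I$ and completes the isometry. The crux of the argument is the attainment of the supremum in the definition of $\norm{\cdot}_I$: this is automatic whenever $I$ is finite, which is the regime of all subsequent applications of this proposition (where $I$ indexes a finite Shilov boundary of a Shilov-finite Banach algebra); in the fully general setting one would need to supplement this by approximating $v$ by elements of the dense subspace $E'_I$ and propagating the equality by ultrametric continuity, with care taken so that no component norm is lost in the passage to the limit.
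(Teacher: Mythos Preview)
Your proof is correct and takes essentially the same approach as the paper's. The paper's argument is terser but identical in substance: it takes $e\in\KE_I^{\circ\setminus\circ\circ}$, asserts the existence of an index $i$ with $\pi_i\iota(e)\in\KE_i^{\circ\setminus\circ\circ}$, and invokes the contraction construction to conclude $\kappa_i\pi_i\iota(e)\in\KC_i^{\circ\setminus\circ\circ}$, hence $\kappa$ sends the unit sphere into the unit sphere. The attainment-of-supremum issue you flag is tacitly assumed in the paper's proof as well; as you correctly observe, every application in the paper has $I$ finite (indexing a finite Shilov boundary), so the point is moot there.
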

\begin{proof}
    For any $e\in\KE_{I}^{\circ\setminus\circ\circ}$, there exists $i$ for which $\pi_i\iota(e)$ is in $\KE_i^{\circ\setminus\circ\circ}$, hence $\kappa_i\pi_i\iota(e)$ is in $\KC_i^{\circ\setminus\circ\circ}$ by the construction of contraction. Hence $\KE_{I}^{\circ\setminus\circ\circ}$ is mapped by $\kappa$ into $[\obot_{i\in I}\KC_i]^{\circ\setminus\circ\circ}$ which implies the injectivity and isometricity of $\kappa$.
\end{proof}

Recall that the operator norm $\norm{S}_{\hom(E,F)}$ of a linear map $S: E\to F$ is defined by $\sup_{0\neq v\in E}\norm{Sv}_F/\norm{v}$, and the distance between two norms $\dist(\norm{\ndot}_1,\norm{\ndot}_2)$ on $E$ is $\sup_{0\neq v\in E}\big|\abs{\log\abs{\norm{v}_1/\norm{v}_2}} \big|$. The following two esitmates are elementary:
\begin{proposition}\label{P: det norm distorsion subspace}
    Let $E$ and $E'$ be finite dimensional Banach spaces and let $S\in L(E, E')$ be an invertible linear operator; let $F$ and $F'=S(F)$ be subspaces and $m\in\bbN$ be the codimension $\dim E-\dim F$. Let $C\in\bbR_+$ be a constant such that $C^{-1}<\max\{\norm{S}_{\hom},\norm{S^{-1}}_{\hom}\}<C$. Then it holds that
    \[ C^{-m}\norm{\det S|_E}_{\hom}\leq \norm{\det S|_F}_{\hom}\leq C^m \norm{\det S|_E}_{\hom}.\]
\end{proposition}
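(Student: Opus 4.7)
The plan is to reduce the ratio $\norm{\det S|_E}_{\hom} / \norm{\det S|_F}_{\hom}$ to the absolute value of the determinant of an $m \times m$ block extracted from $S$, and then bound that block determinant in both directions via the ultrametric estimate, applied to $S$ itself for one inequality and to $S^{-1}$ for the other.

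First, since $k$ is maximally complete with $\abs{k^\times} = \bbR_+$, fix an orthonormal basis $\{e_1,\dots,e_r\}$ of $F$ and complete it by an orthonormal basis $\{f_1,\dots,f_m\}$ of an orthogonal complement $G$ of $F$ in $E$. Similarly choose an orthonormal basis $\{u_1,\dots,u_r\}$ of $F' = S(F)$ and complete it by an orthonormal basis $\{v_1,\dots,v_m\}$ of an orthogonal complement $H'$ of $F'$ in $E'$. Because $S(F) = F'$, each $Se_i$ has no $v_{j'}$-component, so in these bases $S$ is represented by a block upper-triangular matrix
\[M_S = \begin{pmatrix} A & B \\ 0 & D \end{pmatrix},\]
where $A$ is the matrix of $S|_F$ in the bases $\{e_i\},\{u_l\}$, and $D_{j'j}$ is the coefficient of $v_{j'}$ in the expansion of $Sf_j$.

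Since all chosen bases are orthonormal, every top wedge $\bigwedge e_i$, $\bigwedge u_l$, $\bigwedge e_i\wedge\bigwedge f_j$, etc., has norm one, so the operator norm of a determinant map equals the absolute value of the corresponding matrix determinant. Thus $\norm{\det S|_F}_{\hom} = \abs{\det A}$ and $\norm{\det S|_E}_{\hom} = \abs{\det M_S} = \abs{\det A}\cdot\abs{\det D}$, and the quantity to be bounded is exactly $\abs{\det D}$. Orthonormality of $\{v_{j'}\}$ gives $\abs{D_{j'j}} \leq \norm{Sf_j}_{E'} \leq \norm{S}_{\hom} < C$; by the Leibniz formula and the ultrametric inequality, $\abs{\det D} \leq \max_{\sigma} \prod_{j} \abs{D_{\sigma(j),j}} \leq \norm{S}_{\hom}^m < C^m$, yielding $\norm{\det S|_E}_{\hom} \leq C^m \norm{\det S|_F}_{\hom}$, which is the left-hand inequality.

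For the reverse bound, repeat the construction verbatim for $S^{-1}: E' \to E$ with $E' = F' \oplus H'$ on the source and $E = F \oplus G$ on the target: the corresponding $m\times m$ block is the lower-right block of $M_S^{-1}$, which by explicit block inversion of $M_S$ equals $D^{-1}$. Therefore $\abs{\det D}^{-1} = \abs{\det D^{-1}} \leq \norm{S^{-1}}_{\hom}^m < C^m$, so $\abs{\det D} > C^{-m}$ and the right-hand inequality follows. The only point requiring a little care is that the hypothesis $S(F) = F'$ is precisely what forces $M_S$ to be block upper-triangular, so that $\det S$ cleanly factors as $\det A \cdot \det D$; after that, the non-archimedean determinant bound and the block inversion trick are entirely routine.
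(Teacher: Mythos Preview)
Your argument is correct. The paper does not supply a proof of this proposition at all: it simply introduces it (together with the next proposition) with the phrase ``The following two estimates are elementary'' and moves on, so there is nothing to compare against. Your block upper-triangular reduction via orthonormal bases, the identification $\norm{\det S|_E}_{\hom}/\norm{\det S|_F}_{\hom}=\abs{\det D}$, the ultrametric Leibniz bound $\abs{\det D}\le\norm{S}_{\hom}^m$, and the symmetric application to $S^{-1}$ (whose lower-right block is $D^{-1}$) are exactly the kind of routine verification the paper has in mind.
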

\begin{proposition}\label{P: operator norm changes}
    Let $E$ and $F$ be $k$-vector spaces and let $S\in L(E, E')$ be a linear operator. Let $\norm{\ndot}_{E,i}$ and $\norm{\ndot}_{F,i}$ be norms on $E$ and $F$ for $i=1,2$; write $E_i, F_i$ for the corresponding normed vector spaces, then
    \[ \big|\log\abs{ \norm{S}_{\hom(E_1,F_1)}/\norm{S}_{\hom(E_2,F_2)}}\big|\leq \dist(\norm{\ndot}_{E,1},\norm{\ndot}_{E,2})+\dist(\norm{\ndot}_{F,1}, \norm{\ndot}_{F,2}).\]
\end{proposition}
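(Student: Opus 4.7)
The plan is to reduce the claim to a pointwise inequality on the ratio $\norm{Sv}/\norm{v}$ for individual nonzero vectors $v \in E$, and then to pass to the supremum. There is no need to introduce any auxiliary machinery.

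Concretely, for any $v \in E$ with $Sv \neq 0$ I would start from the elementary telescoping identity
\[
\log\frac{\norm{Sv}_{F,1}}{\norm{v}_{E,1}} - \log\frac{\norm{Sv}_{F,2}}{\norm{v}_{E,2}} = \log\frac{\norm{Sv}_{F,1}}{\norm{Sv}_{F,2}} + \log\frac{\norm{v}_{E,2}}{\norm{v}_{E,1}}.
\]
By the very definition of the distance between two norms on a vector space, applied once to $Sv \in F$ and once to $v \in E$, the two summands on the right are bounded in absolute value by $\dist(\norm{\ndot}_{F,1},\norm{\ndot}_{F,2})$ and $\dist(\norm{\ndot}_{E,1},\norm{\ndot}_{E,2})$ respectively. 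Writing $D$ for the sum of these two distances, I obtain the uniform pointwise bound
\[
\log\frac{\norm{Sv}_{F,1}}{\norm{v}_{E,1}} \leq \log\frac{\norm{Sv}_{F,2}}{\norm{v}_{E,2}} + D.
\]

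Next I take the supremum over $0 \neq v \in E$ on both sides; vectors in $\ker S$ contribute a quantity $-\infty$ on both the left and the right and are harmless when forming $\log\norm{S}_{\hom}$. The result is $\log\norm{S}_{\hom(E_1,F_1)} \leq \log\norm{S}_{\hom(E_2,F_2)} + D$. Swapping the indices $1$ and $2$ in the argument above yields the reverse inequality, and the two combined give precisely the inequality in the statement.

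The hard part is essentially nonexistent: this is a routine rearrangement of the definitions of $\dist$ and of the operator norm as a supremum, and I do not anticipate any genuine technical obstacle. The only cosmetic points worth flagging are that if one of the operator norms is $+\infty$ (i.e.\ $S$ is unbounded for some pair of norms) while $D$ is finite, the telescoping bound forces the other operator norm also to be $+\infty$ so the inequality still holds; and if $D$ itself is $+\infty$ (the norms on $E$ or $F$ being pairwise inequivalent) the bound is vacuous.
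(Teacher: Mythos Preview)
Your proof is correct and is exactly the kind of routine verification one would expect; the paper itself does not supply a proof but merely flags the estimate as ``elementary,'' so your argument is entirely in line with the paper's treatment.
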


\begin{proposition}\label{P: first singular value wrt max}
    Let $E$ be a finite dimensional vector space and let $S\in L(E, E)$ be a linear operator. For any norm $\norm{\ndot}$ on E, denote by $\sigma(S,\norm{\ndot})$ the operator norm of $S$ with respect to $\norm{\ndot}$. Let $\norm{\ndot}_1$, $\norm{\ndot}_2$ be two norms, then
    \[\sigma(S,\max\{\norm{\ndot}_1, \norm{\ndot}_2\})\leq \max\{\sigma(S,\norm{\ndot}_1), \sigma(S,\norm{\ndot}_2)\}.\]
\end{proposition}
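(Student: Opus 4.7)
The approach is entirely elementary and does not rely on any special ultrametric feature. The plan is to bound $\|Sv\|/\|v\|$ pointwise in the maximum norm and then pass to the supremum. Set $\sigma_i := \sigma(S,\|\ndot\|_i)$ for $i=1,2$ and write $\|\ndot\|_{\max} := \max\{\|\ndot\|_1,\|\ndot\|_2\}$.

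For an arbitrary nonzero $v\in E$, the key chain of inequalities is
\[
\|Sv\|_{\max} \;=\; \max\{\|Sv\|_1,\|Sv\|_2\} \;\leq\; \max\{\sigma_1\|v\|_1,\,\sigma_2\|v\|_2\} \;\leq\; \max\{\sigma_1,\sigma_2\}\cdot\max\{\|v\|_1,\|v\|_2\}.
\]
The first step is by definition of $\sigma_i$ on each coordinate; the second uses the trivial inequality $\max\{a_1b_1,a_2b_2\}\leq\max\{a_1,a_2\}\cdot\max\{b_1,b_2\}$ for nonnegative reals. Dividing by $\|v\|_{\max}>0$ and taking the supremum over $v$ yields the claim.

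There is no real obstacle. I would expect the referee to appreciate that this estimate is sharp only up to the natural slack coming from the possibility that the directions maximizing $\|Sv\|_i/\|v\|_i$ for $i=1,2$ differ; equality can fail, but no hypothesis in the statement rules this out and none is needed for the one-sided bound. Since the argument makes no use of finite-dimensionality or of the ultrametric property, the proposition in fact extends verbatim to operators between normed spaces each carrying two norms, but I would keep the statement as given since this is the form used in the rest of the paper.
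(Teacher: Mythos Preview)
Your argument is correct and is genuinely simpler than the paper's. The paper chooses a basis $\{e_i\}$ simultaneously orthogonal for both ultrametric norms (using spherical completeness of $k$ and finite-dimensionality of $E$), splits $E=E_1\oplus E_2$ according to which norm dominates on each basis vector, decomposes $v=v_1+v_2$, and then bounds $\norm{Sv}_{\max}$ via the ultrametric triangle inequality applied to $Sv_1+Sv_2$. Your proof bypasses all of this: the pointwise inequality $\max\{\sigma_1\norm{v}_1,\sigma_2\norm{v}_2\}\leq\max\{\sigma_1,\sigma_2\}\cdot\norm{v}_{\max}$ already does the job, with no need for any basis, any orthogonal decomposition, or the ultrametric hypothesis. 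As you note, your argument works verbatim for arbitrary (possibly Archimedean, possibly infinite-dimensional) normed spaces, whereas the paper's proof is specific to the ultrametric, spherically complete, finite-dimensional setting assumed in \S1.4. The paper's route buys nothing extra here; your approach is strictly preferable.
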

\begin{proof}
    Since $(k,\abs{\ndot})$ is spherically complete and $E$ is finite dimensional, one can find a basis $\{e_i\}_{i\in I}$ that is orthogonal for both ultrametric norms $\norm{\ndot}_1$ and $\norm{\ndot}_2$. Write $\norm{\ndot}_{\max}=\max\{\norm{\ndot}_1, \norm{\ndot}_2\}$, it is the norm having $\{e_i\}_{i\in I}$ as orthogonal basis with $\norm{e_i}_{\max}=\max\{\norm{e_i}_1,\norm{e_i}_2\}$. Denote by $I_1$ the subset of $I$ for which $\norm{e_i}_{1}\geq \norm{e_i}_{2}$ and by $I_2$ its complement, and by $E_1$ the subspace generated by basis vectors in $I_1$ and similarly $E_2$, thus $E=E_1\oplus E_2$ is an orthogonal direct sum for all three norms.

    For any $v\in E\setminus\{0\}$ with $\norm{v}_{\max}=1$, one can write $v=v_1+v_2$ according to the decomposition; thanks to the ultrametric inequality, the desired estimate follows from
    \[\norm{Sv}_{\max}=\norm{Sv_1+Sv_2}_{\max}\leq\max\{\norm{Sv_1}_1,\norm{Sv_1}_2, \norm{Sv_2}_1, \norm{Sv_2}_2\}\leq\]
    \[\max\big\{\sigma(S,\norm{\ndot}_1)\max\{\norm{v_1}_1, \norm{v_2}_1\}, \sigma(S,\norm{\ndot}_2)\max\{\norm{v_1}_2, \norm{v_2}_2\}\big\}=\max\{\sigma(S,\norm{\ndot}_1), \sigma(S,\norm{\ndot}_2)\}.\]
\end{proof}

\subsection{Shilov boundary}
We recall some notions and properties concerning boundaries of a uniform (i.e. spectral) Banach algebra $\KB$. Recall that its \emph{spectrum} $\KM(\KB)$ consists of semi-absolute values $\abs{\ndot}_z$ (also sometimes denoted by $\abs{(\ndot)(z)}$) bounded by the spectral norm $\norm{\ndot}_{\Sp}$, equipped with the Gelfand-Berkovich topology. A point $z$ is \emph{birational} if the ideal $\ker(\abs{\ndot}_z)$ is $\{0\}$. 

\begin{definition}\label{D: boundaries}
A \emph{boundary} for a uniform Banach algebra $\KB$ is a subset $\Delta$ of $\KM(\KB)$, such that $\norm{f}_{\Sp}=\sup_{z\in \Delta}\abs{f}_z$ holds for any $f\in\KB$. By construction, one can decompose the sup norm $\algnorm{\ndot}_{\KB}$ as $\sup_{a\in\Delta}\abs{\ndot}_a$. A \emph{minimal boundary} is a boundary that is minimal for the inclusion partial order (denoted by $\partial_0\KB$). A \emph{Shilov boundary} is a \emph{closed} boundary minimal for inclusion among the set of all closed boundaries (denoted by $\Shbd\KB$).
\end{definition}
\begin{remark}
    It is elementary that Shilov boundary exists thanks to Zorn's lemma: the intersection $\Gamma_{\infty}$ of a strictly decreasing chain of closed boundaries $\{\Gamma_i\}_{i\in I}$ is a lower bound: for any $f\in \KB^{\circ\setminus\circ\circ}$, there exists $z_i\in P(\abs{f})\cap \Gamma_i$ for any $i\in I$, and the $I$-net $\{z_i\}_{i\in I}$ has a cluster point $z_{\infty}$ in $\KM(\KB)$ by compactness, which is contained in the closed set $\Gamma_{\infty}$ by monotonicity (for non-closed boundaries, this cluster point may not be contained in the intersection). By continuity $z_{\infty}\in P(\abs{f})$, so $\Gamma_{\infty}$ is a closed boundary. It is not clear whether minimal boundary exists.
    
    \emph{A priori}, Shilov boundary is not necessarily unique. A miminal boundary (if exists) is not necessarily closed nor unique; its topological closure is a Shilov boundary (the one associated to this minimal boundary). Note that these are definitions in \cite{EM} (in accordance with the general theory of function algebras) instead of those given in \cite[\S 2.4]{Ber}. 
\end{remark}

\begin{example}\label{Ex: envelope}
    Let $S$ be a subset in $\KM(\KB)$. Its \emph{holomorphic envelope}, denoted by $\KP(S)$, is the subset consisting of points $z$ such that $\abs{f(z)}\leq \sup_{w\in S}\abs{f(w)}$ for all $f\in\KB$. It is a closed (hence compact) subset of $\KM(\KR)$. By construction, it holds that $\Shbd\KP(S)\subseteq S$ if $S$ is closed. One can similarly define holomorphic envelope for any subset in $(\spec B)^{\an}$ using functions $f\in B$ for any $k$-algebra $B$. 

    Given two points $z,w$ in $\KM(\KB)$ (or in $(\spec B)^{\an}$), define an ordering by: $z\prec_{\KP}w$ if and only if $\abs{f(z)}\leq \abs{f(w)}$ for any $f\in \KB$ (or $f\in B$); by construction, the last condition is equivalent to the condition $\{z\}\subseteq \KP(\{w\})$.  
\end{example}

\begin{construction}\label{Cs: reduction an element}
    Recall that the reduction map $\red: \KM(\KB)\rightarrow \spec(\widetilde{\KB})$ is the one induced by $[\KB\xrightarrow{z}\SH(z)]\mapsto \ker[\widetilde{\KB}\xrightarrow{\widetilde{z}}\widetilde{\SH(z)}]$. Denote by $\eta(\widetilde{\KB})$ the set of generic points for its irreducible components, namely the minimal prime ideals of $\widetilde{\KB}$. For any $f\in\KB^{\circ\setminus\circ\circ}$, denote by $P(\abs{f})$ the closed subset of $\KM(\KB)$ where $\abs{f(\ndot)}=1$; via norm reduction, it is equal to $\red^{-1}D(\widetilde{f})$ where $D(-)$ denotes the principal Zariski open set of $\spec\widetilde{\KB}$. By construction, $z\in P(\abs{f})$ if and only if $\red(z)\in D(\widetilde{f})$. Moreover, it holds that $P(\abs{f})\cap P(\abs{g})=P(\abs{fg})$ if the intersection is not empty.
\end{construction}

\begin{proposition}\label{P: no generisation in minimal boundary}
    Let $\KB$ be a uniform Banach algebra, then
    \begin{enumerate}
        \item if $\frak{p}$, $\frak{q}$ are points in $\spec\widetilde{\KB}$, with $\frak{p}\subset \frak{q}$, then for any $z, w$ with $\red(z)=\frak{p}$ and $\red(w)=\frak{q}$, and any $f\in\KB^{\circ}$, it holds that $w\in P(\abs{f})$ implies $z\in P(\abs{f})$;
        \item if $\Delta$ is a boundary, $w\in\Delta$, and $z$ is a point such that $\red(z)\subset\red(w)$, then $(\Delta\setminus\{w\})\cup\{z\}$ is still a boundary; 
        \item if $\KB$ has a minimal boundary $\partial_0\KB$, then there is no generisation relation between any two points in $\red(\partial_0\KB)$.
    \end{enumerate}
\end{proposition}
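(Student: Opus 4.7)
The plan is to deduce all three assertions from the translation, recalled in Construction \ref{Cs: reduction an element}, that for $f\in\KB^\circ$ one has $z\in P(\abs{f})$ if and only if $\widetilde{f}\notin\red(z)$. For (1), I would argue directly: if $w\in P(\abs{f})$ then $\widetilde{f}\notin\red(w)$, and since the containment of prime ideals $\red(z)\subset\red(w)$ implies that any element outside the larger prime is also outside the smaller, one gets $\widetilde{f}\notin\red(z)$, hence $z\in P(\abs{f})$.

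For (2), I would verify the boundary identity $\norm{f}_{\Sp}=\sup_{a\in\Delta'}\abs{f}_a$ on an arbitrary $f\in\KB$ by a short case analysis, where $\Delta':=(\Delta\setminus\{w\})\cup\{z\}$. After rescaling $f$ by an element of $k$ of suitable absolute value (possible since $\abs{k^\times}=\bbR_+$), I can assume $\norm{f}_{\Sp}=1$, so that $f\in\KB^\circ$. Set $s:=\sup_{a\in\Delta\setminus\{w\}}\abs{f}_a$. If $s=1$ there is nothing to prove, since $\Delta\setminus\{w\}\subset\Delta'$. Otherwise $s<1=\sup_{a\in\Delta}\abs{f}_a$, which forces $\abs{f(w)}=1$, i.e.\ $w\in P(\abs{f})$. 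Part (1), applied with the given generisation $\red(z)\subset\red(w)$, then yields $z\in P(\abs{f})$, whence $\abs{f(z)}=1$ and $\sup_{a\in\Delta'}\abs{f}_a=1$.

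For (3) I would argue by contradiction. Suppose two distinct points $\mathfrak{p}_1,\mathfrak{p}_2\in\red(\partial_0\KB)$ satisfy $\mathfrak{p}_1\subsetneq\mathfrak{p}_2$, and pick preimages $w_i\in\partial_0\KB$ with $\red(w_i)=\mathfrak{p}_i$; note $w_1\neq w_2$ since their reductions differ. Applying (2) with $w=w_2$ and $z=w_1$, the set $(\partial_0\KB\setminus\{w_2\})\cup\{w_1\}=\partial_0\KB\setminus\{w_2\}$ is again a boundary, strictly smaller than $\partial_0\KB$, contradicting minimality. The only delicate point in the whole argument is the bookkeeping in (2): one must observe that a strict drop of the supremum after removing the single point $w$ forces that point to attain it. This is elementary but is precisely where the set-theoretic structure of a minimal (as opposed to merely closed Shilov) boundary intervenes; once it is handled, no further obstacles are expected.
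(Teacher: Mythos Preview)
Your proof is correct and follows essentially the same approach as the paper. The paper's own proof is a two-line sketch: it notes that (1) holds because the Zariski open $D(\widetilde{f})$ is stable under generisation, and declares (2) and (3) to be immediate consequences; you have simply unpacked these ``immediate consequences'' carefully, in particular giving the case analysis for (2) and the contradiction argument for (3).
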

\begin{proof}
    The (i) is a consequence of the fact that the Zariski open set $D(\widetilde{f})$ is closed under generisation from $\frak{q}$ to $\frak{p}$. The (ii) and (iii) are immediate consequences.
\end{proof}

\begin{proposition}\label{P: element and reduction}
    Let $\KB$ be a uniform Banach algebra, and $\frak{p}\in (\spec\widetilde{\KB})^{\an}$ be a prime ideal, then
    \begin{enumerate}
        \item if $f,g\in \KB^{\circ\setminus\circ\circ}$ are elements such that neither $\widetilde{f}$ nor $\widetilde{g}$ is in $\frak{p}$, then $P(\abs{f})\cap P(\abs{g})\neq\emptyset$; 
        \item if $\frak{p}\in \eta(\widetilde{\KB})$, then $\red^{-1}(\frak{p})=\bigcap_{\widetilde{f}\notin\frak{p}}P(\abs{f})$ is a non-empty closed set; consequently $\red(\KB)$ contains $\eta(\widetilde{\KB})$;
        \item the set $\red^{-1}(\eta(\widetilde{\KB}))=\bigcup_{\frak{p}\in \eta(\widetilde{\KB})}\red^{-1}(\frak{p})$ is a boundary;
        \item any tuple of points in $\prod_{\frak{p}\in \eta(\widetilde{\KB})}\red^{-1}(\frak{p})$, viewed as a subset of $\KM(\KB)$, is a boundary.
    \end{enumerate}
\end{proposition}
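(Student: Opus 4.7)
The plan is to work down the list, using (i) as the combinatorial engine, (ii) as the nonemptiness statement obtained by compactness, and then reading off (iii), (iv) directly. Throughout I will use freely two facts: the reduction $\widetilde{\KB}$ of a uniform Banach algebra is \emph{reduced} (if $\widetilde{f}^n=0$ then $\norm{f}_{\Sp}^n<1$, hence $\widetilde{f}=0$), and $\widetilde{f}\notin\red(z)$ if and only if $z\in P(\abs{f})$ (direct from the definition of $\red$ in Construction \ref{Cs: reduction an element}).

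For (i), assume $\widetilde{f},\widetilde{g}\notin \frak{p}$. Since $\frak{p}$ is prime, $\widetilde{fg}=\widetilde{f}\widetilde{g}\notin \frak{p}$, in particular $\widetilde{fg}\neq 0$ in the reduced ring $\widetilde{\KB}$, which means $\norm{fg}_{\Sp}=1$. By definition of the spectral sup norm there is a point $z\in \KM(\KB)$ with $\abs{fg(z)}=1$; since $\abs{f(z)},\abs{g(z)}\leq 1$ we necessarily have $\abs{f(z)}=\abs{g(z)}=1$, so $z\in P(\abs{f})\cap P(\abs{g})$.

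For (ii), first observe that $\red(z)\subseteq \frak{p}$ is equivalent to saying that every $\widetilde{f}\notin \frak{p}$ also satisfies $\widetilde{f}\notin \red(z)$, i.e.~$z\in P(\abs{f})$. Hence $\bigcap_{\widetilde{f}\notin\frak{p}}P(\abs{f})=\{z:\red(z)\subseteq \frak{p}\}$. When $\frak{p}$ is minimal, any prime contained in it equals it, so this intersection is exactly $\red^{-1}(\frak{p})$. Nonemptiness follows from compactness plus the finite intersection property: given finitely many $f_1,\ldots,f_n$ with $\widetilde{f_i}\notin\frak{p}$, iterated application of (i) (or directly: $\widetilde{f_1\cdots f_n}\notin \frak{p}$) produces a point in $\bigcap_i P(\abs{f_i})$; each $P(\abs{f_i})$ is closed in the compact space $\KM(\KB)$, so the full intersection is nonempty.

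For (iii), pick $f\in\KB$ and rescale so that $\norm{f}_{\Sp}=1$, hence $\widetilde{f}\neq 0$. Since $\widetilde{\KB}$ is reduced, the nilradical is zero, so $\widetilde{f}$ lies outside at least one minimal prime $\frak{p}\in\eta(\widetilde{\KB})$. By (ii), $\red^{-1}(\frak{p})$ is nonempty, and any $z\in \red^{-1}(\frak{p})$ has $\widetilde{f}\notin \red(z)$, i.e.~$\abs{f(z)}=1=\norm{f}_{\Sp}$. Thus $\red^{-1}(\eta(\widetilde{\KB}))$ is a boundary. Finally for (iv), the same argument applies verbatim to any chosen point $z_{\frak{p}}\in\red^{-1}(\frak{p})$: given $f$ with $\norm{f}_{\Sp}=1$, the minimal prime $\frak{p}$ avoiding $\widetilde{f}$ furnishes a sample point $z_{\frak{p}}$ of the tuple at which $\abs{f(z_{\frak{p}})}=1$, so any such transversal is already a boundary. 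There is no real obstacle here; the content is concentrated in (i)--(ii), where the crucial inputs are primeness of $\frak{p}$ combined with the fact that the reduction of a uniform algebra is reduced, plus a standard compactness/FIP argument to extract a point from an infinite intersection.
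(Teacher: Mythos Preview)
Your proof is correct and follows essentially the same route as the paper: primeness of $\frak{p}$ gives $\widetilde{fg}\neq 0$ hence $P(\abs{fg})\neq\emptyset$ for (i), compactness and the finite intersection property give (ii), and reducedness of $\widetilde{\KB}$ forces a nonzero reduction to avoid some minimal prime for (iii)--(iv). Your version of (i) is in fact slightly cleaner than the paper's (you use $fg$ directly as a lift of $\widetilde{f}\widetilde{g}$ rather than passing through an auxiliary $h$), and your justification of the equality $\red^{-1}(\frak{p})=\bigcap_{\widetilde{f}\notin\frak{p}}P(\abs{f})$ in (ii) is more explicit.
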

\begin{proof}
    For (1), by construction the product $\widetilde{f}\cdot\widetilde{g}$ is not in $\frak{p}$, hence is non-zero; thus it has a lift $h\in \KB^{\circ\setminus\circ\circ}$. As $D(\widetilde{f})\cap D(\widetilde{g})=D(\widetilde{h})$, and $\red^{-1}D(\widetilde{h})=P(\abs{h})$ is non-empty, hence so is $\red^{-1}(D(\widetilde{f})\cap D(\widetilde{g}))=P(\abs{f})\cap P(\abs{g})$.

    For (2), as $\frak{p}$ is a minimal prime ideal, one can write $\frak{p}$ as $\bigcap_{\widetilde{f}\notin\frak{p}}D(\widetilde{f})$, hence $\red^{-1}$ is the intersection of closed subsets $\{P(\abs{f})\}_{\widetilde{f}\notin\frak{p}}$. By (i), this collection of subsets has finite intersection property. Thus by compactness of $\KM(\KB)$, this intersection is a non-empty closed subset, consequently $\eta(\widetilde{\KB})\subseteq \red(\KB)$.

    For (3), for any $f\in \KB^{\circ\setminus\circ\circ}$, as $\widetilde{f}\neq 0$, there exists $\frak{p}\in \eta(\widetilde{\KB})$ such that $\widetilde{f}(\frak{p})\neq 0$, thus $\abs{f}$ takes maximum on $\red^{-1}(\frak{p})$ (non-empty by (2)). This shows that $\red^{-1}(\eta(\widetilde{\KB}))$ is a boundary.

    For (4), it follows from (3) immediately.
\end{proof}
\begin{remark}
    Note that apart from the case where $\KB$ is a (strict) affinoid algebra, it is not clear whether $\red$ is surjective. 
\end{remark}

\begin{definition}\label{D: peak-isolated point}
    A point $z\in \KM(\KB)$ is \emph{peak-isolated} if $\bigcap_{z\in P(\abs{f})}P(\abs{f})=\{z\}$.
\end{definition}

\begin{proposition}\label{P: peak point if red saturated}
    Let $\KB$ be a uniform Banach algebra, and $z\in \KM(\KB)$ be a point which is peak-isolated, then
    \begin{enumerate}
        \item for any (open) neighbourhood $V$ of $z$, one can find $f_V\in\KB^{\circ\setminus\circ\circ}$ such that $P(\abs{f_V})\subseteq V$;
        \item the point $z$ lies in any Shilov boundary $\Shbd\KB$.
    \end{enumerate}
\end{proposition}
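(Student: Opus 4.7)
The plan is to establish (i) by a standard compactness-plus-product argument, and then to deduce (ii) from (i) by a one-line contradiction using that any Shilov boundary is compact.

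For (i), I would fix an open neighborhood $V\ni z$ and work with the compact complement $V^c=\KM(\KB)\setminus V$. For each $w\in V^c$ (so $w\neq z$), peak-isolation of $z$ supplies a function $g_w\in\KB^{\circ\setminus\circ\circ}$ with $z\in P(\abs{g_w})$ but $w\notin P(\abs{g_w})$, i.e.\ $\abs{g_w(w)}<1$. By continuity of $\abs{g_w(\ndot)}$ the strict inequality persists on an open neighborhood $U_w$ of $w$, and compactness of $V^c$ yields a finite subcover $U_{w_1},\ldots,U_{w_n}$. I would then form the product $f_V:=g_{w_1}\cdots g_{w_n}$: submultiplicativity puts $f_V\in\KB^{\circ}$; evaluation at $z$ gives $\abs{f_V(z)}=\prod_i\abs{g_{w_i}(z)}=1$, so $f_V\in\KB^{\circ\setminus\circ\circ}$ and $z\in P(\abs{f_V})$; and for any $u\in V^c$ one of the factors satisfies $\abs{g_{w_i}(u)}<1$ while the others are $\leq 1$, so $\abs{f_V(u)}<1$. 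This shows $P(\abs{f_V})\subseteq V$, consistent with the identity $P(\abs{f})\cap P(\abs{g})=P(\abs{fg})$ from Construction~\ref{Cs: reduction an element}.

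For (ii), suppose towards contradiction that $z\notin\Shbd\KB$. Since $\Shbd\KB$ is closed, $V:=\KM(\KB)\setminus\Shbd\KB$ is an open neighborhood of $z$, so (i) produces $f_V\in\KB^{\circ\setminus\circ\circ}$ with $\abs{f_V(w)}<1$ for every $w\in\Shbd\KB$. Compactness of $\Shbd\KB$ together with continuity of $\abs{f_V(\ndot)}$ forces $\sup_{w\in\Shbd\KB}\abs{f_V(w)}<1$, contradicting the boundary property $\sup_{w\in\Shbd\KB}\abs{f_V(w)}=\norm{f_V}_{\Sp}=1$.

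The main (if mild) obstacle is step (i): one must produce a single element whose peak locus is contained in $V$, not merely a pointwise uniform bound on $V^c$. This is what forces the product construction, which in turn relies on two features that are both built into the setup, namely the normalization $\abs{g_w(z)}=1$ coming from peak-isolation (so that products stay on the unit sphere at $z$) and the multiplicativity of $\abs{\ndot(u)}$ built into the Berkovich spectrum. Everything else is a formal compactness argument.
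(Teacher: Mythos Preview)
Your proof is correct and follows essentially the same approach as the paper. For (1), both you and the paper run the identical compactness-plus-product argument, with the only difference being cosmetic: the paper phrases it via the finite intersection property of the closed family $\{P(\abs{f}):z\in P(\abs{f})\}$ restricted to $V^{\complement}$, while you phrase it dually as an open cover of $V^{\complement}$ by the sets $\{\abs{g_w(\ndot)}<1\}$. For (2), the paper chooses a point of $\Shbd\KB$ in each $P(\abs{f})$ and observes that the resulting net converges to $z$ (using (1) to see that the $P(\abs{f})$ form a neighborhood base), whereas you apply (1) directly to the open set $\KM(\KB)\setminus\Shbd\KB$ and derive a contradiction; these are the same idea, and your packaging is if anything slightly more direct.
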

\begin{proof}
    For (1), note that the collection of closed subsets in the intersection has finite intersection property. As $V^{\complement}\cap \bigcap_{z\in P(\abs{f})}P(\abs{f})=\emptyset$, by compactness there are a finite number of $f$'s that makes such an intersection still empty. The product of these $f$'s can be taken as $f_V$.

    For (2), by construction, for any Shilov boundary $\Shbd\KB$, each $P(\abs{f})$ should contain a point in it, thus there is a net of points in $\Shbd\KB$ converging to $z$, hence $z$ is in this Shilov boundary as it is a closed set. 
\end{proof}

It is however shown in \cite{Gue} \cite{EM} that there is a way to construct a particular Shilov boundary for any uniform Banach algebra; see Appendix A for more details. In the main part of this article, we shall only deal with a simple case which suffices for our use, namely when one (hence all, by Proposition \ref{P: finite Shilov boundary pic isolated}) $\Shbd\KB$ is finite, and will not use this construction.

\begin{proposition}\label{P: finite Shilov boundary basic}
    Let $\KB$ be a uniform Banach algebra. Suppose that it has a Shilov boundary $\Shbd\KB$ which is a finite set $\{\xi_i\}_{i\in I}$, then
    \begin{enumerate}
        \item this Shilov boundary is a minimal boundary; 
        \item for any proper subset $J\subset I$, there exists $f\in\KB^{\circ}$, such that $\abs{f(\xi_i)}<1$ for all $i\in J$ and $\abs{f(\xi_j)}=1$ for all $j\in I\setminus J$. If $\KB$ is graded, then $f$ can be chosen to be homogeneous;
        \item every $\frak{p}_i:=\red(\xi_i)$ is a minimal prime, the map $\red$ restricts to a bijection $\Shbd\KB\to \eta(\widetilde{\KB})$; 
        \item the set of Shilov boundaries of $\KB$ is in bijection with points (viewed as a subset of $\KM(\KB)$) in $\prod_{\Shbd\KB}\red^{-1}\red(\xi_i)$ ;
        
    \end{enumerate}
\end{proposition}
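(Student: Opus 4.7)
I will proceed through the four parts in order. Parts (1) and (2) are handled together. For (1), any finite subset of $\KM(\KB)$ is closed (since the spectrum is Hausdorff), so every proper subset of $\Shbd\KB$ is automatically closed; if any were a boundary, it would be a closed boundary strictly inside $\Shbd\KB$, contradicting its minimality among closed boundaries. For (2), using (1), for each $i$ the set $\Shbd\KB \setminus \{\xi_i\}$ is not a boundary, so after rescaling there exists $g_i \in \KB$ with spectral norm $1$ and $|g_i(\xi_k)| < 1$ for $k \neq i$ (so $|g_i(\xi_i)| = 1$ follows from the boundary property of $\Shbd\KB$). Given a proper $J \subsetneq I$, set $f := \sum_{j \in I \setminus J} g_j$. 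At $\xi_j$ with $j \notin J$, the $j$-th summand has absolute value $1$ while the others are strictly smaller, so by the strict ultrametric inequality $|f(\xi_j)| = 1$; at $\xi_i$ with $i \in J$ every summand has absolute value strictly less than $1$, so $|f(\xi_i)| < 1$. In the graded case, one arranges each $g_i$ to be homogeneous by passing to a maximal-norm graded component, using the orthogonality of $\algnorm{\ndot}$ across the grading.

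For (3), injectivity of $\red$ on $\Shbd\KB$ follows from (2) applied with $J = \{i\}$, producing $f$ with $\widetilde{f} \in \mathfrak{p}_i \setminus \mathfrak{p}_j$. For the minimality of each $\mathfrak{p}_i$, I argue by contradiction: if $\mathfrak{p}_i$ is not minimal, let $\mathfrak{q} \subsetneq \mathfrak{p}_i$ be a minimal prime (so $\red^{-1}(\mathfrak{q}) \neq \emptyset$ by Proposition~\ref{P: element and reduction}(2)), pick $\eta \in \red^{-1}(\mathfrak{q})$, choose $h \in \mathfrak{p}_i \setminus \mathfrak{q}$, and lift to $g \in \KB^\circ$, giving $|g(\xi_i)| < 1$ and $|g(\eta)| = 1$. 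Apply (2) with $J = I \setminus \{i\}$ to obtain $f_i \in \KB^\circ$ peaking only at $\xi_i$ within $\Shbd\KB$; since $\widetilde{f_i} \notin \mathfrak{p}_i$ implies $\widetilde{f_i} \notin \mathfrak{q}$, also $|f_i(\eta)| = 1$. The product $f_i g \in \KB^\circ$ then satisfies $|f_i g(\xi_k)| < 1$ at every $\xi_k \in \Shbd\KB$ (at $\xi_i$ via the factor $g$, at the other $\xi_k$ via $f_i$), yet $|f_i g(\eta)| = 1$, contradicting the boundary property of $\Shbd\KB$. For surjectivity of $\red$ onto $\eta(\widetilde{\KB})$: if a minimal prime $\mathfrak{q}$ lay outside $\{\mathfrak{p}_i\}$, then by the just-proven minimality $\mathfrak{q}$ is incomparable with every $\mathfrak{p}_i$, so one picks $a_i \in \mathfrak{p}_i \setminus \mathfrak{q}$, and by primality $\prod_i a_i \in \bigcap_i \mathfrak{p}_i \setminus \mathfrak{q}$; the lift produces the same contradiction.

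For (4), the backward direction is again via the product trick: any tuple $\{\eta_i\}_{i \in I}$ with $\eta_i \in \red^{-1}(\mathfrak{p}_i)$ is a boundary by Proposition~\ref{P: element and reduction}(iv) (using $\eta(\widetilde{\KB}) = \{\mathfrak{p}_i\}$ from (3)), finite hence closed, and minimality among closed boundaries follows because for each $i$ the lift of $\prod_{k \neq i} b_k$ with $b_k \in \mathfrak{p}_k \setminus \mathfrak{p}_i$ (possible by incomparability from (3)) separates $\eta_i$ from the other $\eta_k$'s. Conversely, any Shilov boundary $\Shbd'$ must contain a point where each such separating function attains its spectral norm, forcing a point in each $\red^{-1}(\mathfrak{p}_i)$; minimality of $\Shbd'$ as a closed boundary then rules out extra points. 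The main obstacle is the product-trick in part (3): converting the algebraic hypothesis that $\mathfrak{p}_i$ is non-minimal into an analytic contradiction requires producing an element of $\KB^\circ$ whose spectral norm is attained only outside $\Shbd\KB$, and the ultrametric size estimates on $f_i g$—strictly less than $1$ throughout $\Shbd\KB$ while equal to $1$ at $\eta$—are the pivotal step on which the whole proof turns.
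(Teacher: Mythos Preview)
Your argument is essentially correct and for parts (1), (3), (4) follows the same contours as the paper's proof. The genuine difference is in part (2): the paper works entirely inside the reduction $\widetilde{\KB}$, using Proposition~\ref{P: no generisation in minimal boundary} to know there are no inclusions among the $\mathfrak{p}_i$, then prime avoidance to produce $\widetilde{f}\in\bigcap_{j\in J}\mathfrak{p}_j\setminus\bigcup_{i\notin J}\mathfrak{p}_i$, and lifts. You instead stay on the analytic side: produce peak functions $g_i$ directly from minimality (1) and combine them by an ultrametric sum. Your route is more self-contained (it does not need to know in advance that the $\mathfrak{p}_i$ are distinct or incomparable), while the paper's route makes the subsequent reduction arguments in (3) more uniform.

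There is, however, a genuine gap in your graded clause of (2). Passing each $g_i$ to a maximal-norm homogeneous component is fine and does preserve the peak pattern (since $\lvert g_{i,n}(\xi_k)\rvert\le\lvert g_i(\xi_k)\rvert<1$ for $k\ne i$, and the boundary property forces the chosen component to have value $1$ at $\xi_i$). But the resulting homogeneous $g_j$ for $j\in I\setminus J$ may have \emph{different} degrees, so $f=\sum_{j\in I\setminus J}g_j$ is in general not homogeneous. The fix is easy: let $N$ be a common multiple of the degrees and replace each $g_j$ by $g_j^{N/\deg g_j}$ before summing; the peak inequalities are preserved under powers since each $\abs{\cdot}_{\xi_k}$ is multiplicative. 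You should insert this step.

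A smaller imprecision in your forward direction of (4): the separating function only forces a point $z$ of $\Shbd'$ with $\mathfrak{p}_i\subseteq\red(z)$ and $\mathfrak{p}_k\not\subseteq\red(z)$ for $k\ne i$, not $\red(z)=\mathfrak{p}_i$ on the nose. To conclude equality you need to invoke (3) again (applied to the finite Shilov boundary $\Shbd'$, or via Proposition~\ref{P: no generisation in minimal boundary}(ii) to replace $z$ by a point over $\mathfrak{p}_i$). The paper simply declares (4) an ``immediate consequence'' and does not spell this out either.
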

\begin{proof}
    For (1), since $\Shbd\KB$ is a finite set, any subset of it is closed, hence itself is a minimal boundary by definition.

    For (2), it suffices to find an element $\widetilde{f}\in\widetilde{\KB}$ which lies in $\frak{p}_i$ for $i\in J$ but not in $\frak{p}_i$ for $i\in I\setminus J$. Since $\Shbd\KB$ is a minimal boundary, there is no generisation relations, thus $\frak{p}_j\setminus\frak{p}_i$ is non-empty for any $i\notin J$ and $j\in J$. By taking product $\bigcap_{j\in J}\frak{p}_j\setminus \frak{p}_i$ is non-empty for any $i\notin J$, by prime avoidance $\bigcap_{j\in J}\frak{p}_j\setminus \bigcup_{i\notin J}\frak{p}_i$ is non-empty.
    
    For (3), first show that $\frak{p}_i$ are minimal and $\red$ is surjective to $\eta(\widetilde{\KB})$: otherwise there exists $\frak{p}\in\eta(\widetilde{\KB})\setminus \frak{p}_i$, then any $\frak{p}_i\setminus\frak{p}$ is non-empty, hence so is $\cap\frak{p}_i\setminus\frak{p}$. Choose an element $\widetilde{f}$ and a lift $f\in\KB^{\circ}$, it holds that $\abs{f(\xi_i)}<1$ for all $i$ while $\norm{f}_{\Sp}=1$, a contradiction to the definition of Shilov boundary. The injectivity follows from the minimality of boundary. The (4) is an immediate consequence.
\end{proof}

The following statements about analytic localization and reduction are variants of \cite[\S 7.2.6]{BGR} and \cite[Proposition 2.4.4 (ii)]{Ber} with similar arguments.
\begin{lemma}\label{L: norm reduction localization Banach alg finite Shilov}
    Let $\KB$ be a uniform Banach algebra. Let $f\in\KB^{\circ\setminus\circ\circ}$, consider the Banach algebra homomorphism $\sigma: \KB\to \KB\{f^{-1}\}$. Denote by $\norm{\ndot}_{1}$ and $\norm{\ndot}_{2}$ the spectral (semi-)norms on $\KB$ and $\KB\{f^{-1}\}$. Then
    \begin{enumerate}
        \item the subspace $\KM(\KB\{f^{-1}\})$ is identified with the subset $P(\abs{f})$ in $\KM(\KB)$;
        \item for any $b\in\KB$, it holds that $\lim_{n\to\infty}\norm{f^n\cdot b}_1=\norm{\sigma(b)}_2$;
        \item the homomorphism $\widetilde{\KB}[\widetilde{f}^{-1}]\to \widetilde{\KB\{f^{-1}\}}$ is injective; if there is a Shilov boundary $\Shbd\KB$ which is a finite set, then it is also surjective.
    \end{enumerate}
\end{lemma}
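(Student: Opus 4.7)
The plan is to prove (i) and (ii) by standard Banach-algebraic arguments, then to deduce (iii) from (ii) together with the finite Shilov boundary hypothesis.

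For (i), I would use the classical characterization of analytic localization: any bounded character $z$ of $\KB\{f^{-1}\}$ restricts to a character of $\KB$ at which $\abs{f(z)}=1$, since $\abs{f(z)}\leq 1$ and $\abs{f^{-1}(z)}\leq 1$ and their product is $1$; conversely any $z\in P(\abs{f})\subset\KM(\KB)$ extends uniquely to $\KB\{f^{-1}\}$ via $f^{-1}\mapsto f(z)^{-1}$. This identifies $\KM(\KB\{f^{-1}\})$ with $P(\abs{f})$; in particular $\norm{\sigma(b)}_2 = \sup_{z\in P(\abs{f})}\abs{b(z)}$ for any $b\in\KB$.

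For (ii), observe first that $\norm{f^n b}_1$ is monotone decreasing in $n$ (as $\norm{f}_1=1$), and is bounded below by $\norm{\sigma(b)}_2$, since $\sigma(f)$ is a unit in $\KB\{f^{-1}\}$ of spectral norm $1$ with inverse of norm $\le 1$, so $\norm{\sigma(b)}_2 = \norm{\sigma(f)^{-n}\sigma(f^n b)}_2 \le \norm{f^n b}_1$. For the matching upper bound, fix $\epsilon>0$ and form the compact set $K_\epsilon := \{z\in\KM(\KB): \abs{b(z)}\ge \norm{\sigma(b)}_2+\epsilon\}$, which is disjoint from $P(\abs{f})$ by the last remark in (i); by continuity of $\abs{f}$ and compactness, $c:=\sup_{K_\epsilon}\abs{f}<1$. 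Splitting the sup defining $\norm{f^n b}_1$ between $K_\epsilon$ (where $\abs{f^n b}\le c^n\norm{b}_1\to 0$) and its complement (where $\abs{b}<\norm{\sigma(b)}_2+\epsilon$) yields $\norm{f^n b}_1 \le \norm{\sigma(b)}_2 + \epsilon$ for $n$ large, and letting $\epsilon\to 0$ concludes.

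For (iii), injectivity follows directly from (ii): if $\widetilde{b}/\widetilde{f}^n$ maps to zero then $\norm{\sigma(b)}_2<1$, so by (ii) some $\norm{f^N b}_1<1$, giving $\widetilde{f}^N\widetilde{b}=0$ in $\widetilde{\KB}$ and hence $\widetilde{b}/\widetilde{f}^n=0$ in the algebraic localization. For surjectivity, start with $\widetilde{c}\in\widetilde{\KB\{f^{-1}\}}$; by density of $\sigma(\KB[f^{-1}])$ in $\KB\{f^{-1}\}$, pick $c' = \sigma(b)\sigma(f)^{-n}$ with $\norm{c-c'}_2<1$, so that $\widetilde{c}=\widetilde{c'}$ and $\norm{\sigma(b)}_2\le 1$. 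Under the finite Shilov boundary hypothesis $\Shbd\KB=\{\xi_i\}_{i\in I}$, the spectral norm is a finite maximum $\norm{f^N b}_1 = \max_i\abs{f(\xi_i)}^N\abs{b(\xi_i)}$; the terms with $\xi_i\notin P(\abs{f})$ decay geometrically since $\abs{f(\xi_i)}<1$, while those with $\xi_i\in P(\abs{f})$ are bounded by $\norm{\sigma(b)}_2\le 1$. Hence $\norm{f^N b}_1\le 1$ for $N$ sufficiently large, so $f^N b\in\KB^{\circ}$, and $\widetilde{f^N b}/\widetilde{f}^{N+n}\in\widetilde{\KB}[\widetilde{f}^{-1}]$ maps to $\widetilde{\sigma(b)}/\widetilde{\sigma(f)}^n=\widetilde{c'}=\widetilde{c}$.

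The main obstacle is precisely this last step, where finiteness enters decisively: without a finite Shilov boundary, the monotone sequence $\norm{f^n b}_1$ could in principle decrease strictly to $\norm{\sigma(b)}_2$ without ever dropping to $\le 1$, blocking the conclusion that $f^N b\in\KB^{\circ}$ for some finite $N$. The finite boundary replaces the possibly strict decrease by a geometric decay of only finitely many terms, which can be dominated after finitely many steps.
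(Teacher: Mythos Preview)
Your proof is correct and follows essentially the same route as the paper's. The only differences are cosmetic: in (ii) you spell out the compactness argument on $K_\epsilon$ where the paper just asserts ``the formula holds'' from the dichotomy $\abs{f}=1$ on $P(\abs{f})$ versus $\abs{f}<1$ elsewhere; and in (iii) you invoke density of $\sigma(\KB[f^{-1}])$ at the start rather than at the end, but the core step---using the finite Shilov boundary to write $\norm{f^N b}_1$ as a finite maximum with geometrically decaying off-$P(\abs{f})$ terms---is identical.
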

\begin{proof}
    For (1), it is elementary: the defining homomorphism $\KB\to\KB\{f^{-1}\}$ (where the second term is just $\KB\{ T\}/(Tf-1)$) induces a continuous map $\KM(\KB\{f^{-1}\})\to \KM(\KB)$, the fiber over a point $z$ is the spectrum of $\SH(z)\{ T\}/(Tf(z)-1)$ which is non-empty exactly when $\abs{f(z)}\geq 1$.
    
    For (2), since $\abs{f(\ndot)}=1$ on $\KM(\KB\{f^{-1}\})$, and $\abs{f(\ndot)}<1$ on its complement by $(1)$, the formula holds. Note that $\{\norm{f^n\cdot b}_1\}_{n\in\bbN}$ is a decreasing sequence.

    For (3), for the injectivity: since $\widetilde{1}=\widetilde{f\cdot f^{-1}}=\widetilde{f}\cdot\widetilde{f^{-1}}$, the element $\widetilde{f}$ is invertible in $\widetilde{\KB\{f^{-1}\}}$, thus there is an induced homomorphism $\iota$ from $\widetilde{\KB}[\widetilde{f}^{-1}]$ by universal property. Then the natural homomorphism $\widetilde{\sigma}: \widetilde{\KB}\to \widetilde{\KB\{f^{-1}\}}$ is the composition of the localization $\delta: \widetilde{\KB}\to \widetilde{\KB}[\widetilde{f}^{-1}]$ with $\iota$, and it suffices to show the injectivity of $\widetilde{\sigma}$. Let $b\in\KB^{\circ}$ be an element such that $\widetilde{\sigma}(\widetilde{b})=0$, namely $\widetilde{\sigma(b)}=0$; thanks to the identity in (2), one knows $\lim\norm{f^nb}_1=\norm{\sigma(b)}_2<1$, thus there exists $n\in\bbN$ such that $\norm{f^nb}_1<1$, hence $0=\widetilde{f^n b}=\widetilde{f}^n\widetilde{b}$. This means that $\delta(\widetilde{b})$ is $0$ in $\widetilde{\KB}[\widetilde{f}^{-1}]$. 
    
    For the surjectivity under finiteness assumption, for any $c\in \KB[f^{-1}]$ with $\norm{c}_2=1$, write $c=f^{-j}b$ with $b\in\KB$, then $\abs{b(\ndot)}=1$ on $P(\abs{f})$. As $\Shbd\KB$ is a finite set $\{\xi_i\}_{i\in I}$, it holds that the sup norm is a max of finitely many absolute values $\abs{\ndot(\xi_i)}$, hence $\norm{f^n b}_1=\max_{i\in I}\{\abs{f(\xi_i)}^n\abs{b(\xi_i)}\}$, and the set of such values for all $n\in\bbN$ is a subset of $\bbR$ with at most one accumulation point equal to $0$. From this equality and the discreteness, one can find $n\in\bbN$ such that $\norm{f^n b}\leq 1$. Thus $c=f^{-j-n}(f^nb)$ and hence $\widetilde{f^nb}(\widetilde{f})^{-j-n}$ maps to $\widetilde{c}$. As $\KB[f^{-1}]$ is dense in $\KB\{f^{-1}\}$, surjectivity holds.
\end{proof}
\begin{proposition}\label{P: finite Shilov boundary pic isolated}
    Let $\KB$ be a uniform Banach algebra. Suppose that it has a Shilov boundary $\Shbd\KB$ which is a finite set, then
    \begin{enumerate}
        \item any $\xi\in \Shbd\KB$ is peak-isolated;
        \item both minimal and Shilov boundaries are unique, and $\partial_{0}\KB=\Shbd\KB$. 
    \end{enumerate}
        
\end{proposition}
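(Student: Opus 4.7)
I would establish (1) first and let (2) follow from it via Proposition \ref{P: peak point if red saturated} (2). Fix $\xi = \xi_i \in \Shbd\KB$ and set $T_\xi := \bigcap_{\xi \in P(\abs{f})} P(\abs{f})$, so peak-isolatedness amounts to $T_\xi = \{\xi\}$. The strategy is to pass to the analytic localization $\KB' := \KB\{g^{-1}\}$, where $g\in\KB^\circ$ is supplied by Proposition \ref{P: finite Shilov boundary basic} (2) with $J = I\setminus\{i\}$: it satisfies $\abs{g(\xi_i)} = 1$ while $\abs{g(\xi_j)} < 1$ for $j \neq i$. Since $g$ itself is one of the functions defining $T_\xi$, we have $T_\xi \subseteq P(\abs{g}) = \KM(\KB')$.

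The first computation evaluates the spectral norm on $\KB'$. Using Lemma \ref{L: norm reduction localization Banach alg finite Shilov} (2) together with the fact that the finite Shilov boundary computes sup norms, one gets
\[\norm{\sigma(b)}_{\Sp,\KB'} = \lim_n \norm{g^n b}_{\Sp,\KB} = \lim_n \max_j \abs{g(\xi_j)}^n \abs{b(\xi_j)} = \abs{b(\xi)}\]
for any $b \in \KB$, because the $j \neq i$ contributions decay geometrically. Combined with $\abs{g(\xi)} = 1$, this gives $\norm{h}_{\Sp,\KB'} = \abs{h(\xi)}$ on the dense subalgebra $\KB[g^{-1}]$, and then by continuity on all of $\KB'$. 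Consequently $\{\xi\}$ is already a (one-point) boundary for $\KB'$, and every $z \in \KM(\KB')$ satisfies the one-sided bound $\abs{h(z)} \leq \abs{h(\xi)}$ for all $h \in \KB'$.

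Now take $z \in T_\xi$ with $z \neq \xi$, so $z \in \KM(\KB')$. Hausdorffness of the Berkovich spectrum yields an $h \in \KB'$ with $\abs{h(z)} \neq \abs{h(\xi)}$, forced into $\abs{h(z)} < \abs{h(\xi)}$ by the domination above. To promote this to a witness in $\KB$ itself, approximate $h$ by $h_0 = g^{-n} b \in \KB[g^{-1}]$ closely enough in $\norm{\cdot}_{\Sp,\KB'}$ that the strict inequality $\abs{h_0(z)} < \abs{h_0(\xi)}$ persists, and consider $\tilde h_N := g^{N-n} b \in \KB$ for $N \geq n$. Since $\abs{g(\xi)} = \abs{g(z)} = 1$ (the latter because $z \in P(\abs{g})$), the values $\abs{\tilde h_N(\xi)}$, $\abs{\tilde h_N(z)}$ match those of $h_0$; and for $N$ large the sup norm $\norm{\tilde h_N}_\KB = \max_j \abs{g(\xi_j)}^{N-n} \abs{b(\xi_j)}$ is attained at $j = i$, the other terms decaying geometrically. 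Rescaling by a scalar in $k^\times$ of appropriate absolute value yields $f \in \KB$ with $\norm{f}_{\Sp,\KB} = \abs{f(\xi)} = 1$ and $\abs{f(z)} < 1$, contradicting $z \in T_\xi$. Hence $T_\xi = \{\xi\}$.

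Part (2) then follows formally: by (1) every $\xi_i$ is peak-isolated, so by Proposition \ref{P: peak point if red saturated} (2) every Shilov boundary of $\KB$ contains $\Shbd\KB$; minimality forces equality, giving uniqueness of the Shilov boundary. For a minimal boundary $\Delta$, its closure $\overline\Delta$ is a Shilov boundary (by the Remark), hence equals $\Shbd\KB$; finiteness of $\Shbd\KB$ makes it discrete in the Hausdorff spectrum, so $\Delta = \overline\Delta = \Shbd\KB$, yielding both uniqueness of minimal boundary and the identification $\partial_0 \KB = \Shbd\KB$. The main obstacle in this plan is in paragraph three: transferring the separating element from $\KB'$ back to $\KB$ itself. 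It works precisely because $\abs{g} \equiv 1$ on $P(\abs{g})$, so multiplication by powers of $g$ preserves the relevant values at both $\xi$ and $z$.
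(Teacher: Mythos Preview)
Your proof is correct, and the overall strategy for (1) --- pass to the localization $\KB\{g^{-1}\}$ so that $\xi$ becomes the sole boundary point, then separate any $z\neq\xi$ and pull the witness back to $\KB$ --- is the same as the paper's. The execution differs: the paper works through the reduction map, invoking Lemma~\ref{L: norm reduction localization Banach alg finite Shilov}(3) to get $\widetilde{\KB\{g^{-1}\}}\cong\widetilde{\KB}[\widetilde g^{-1}]$, hence a unique minimal prime, and then identifies $T_\xi$ with the fiber $\red^{-1}(\mathfrak p)$ to reduce to a single-Shilov-point case handled via reductions. You instead use Lemma~\ref{L: norm reduction localization Banach alg finite Shilov}(2) to compute the spectral norm on $\KB'$ directly as $\abs{\,\cdot\,(\xi)}$, and then run an explicit ``multiply by $g^N$'' transfer. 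Your route is a bit more elementary in that it avoids the reduction isomorphism and the identification $T_\xi=\red^{-1}\red(\xi)$; the paper's route is terser once that machinery is in place. One small simplification: since $z\neq\xi$ already as points of $\KM(\KB)$, you can take the separating element $b\in\KB$ from the start (the domination $\abs{b(z)}\le\abs{b(\xi)}$ still follows from your computation of the $\KB'$-norm), so the approximation of $h$ by $h_0\in\KB[g^{-1}]$ is not actually needed.

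For (2) your argument via Proposition~\ref{P: peak point if red saturated}(2) and the Remark after Definition~\ref{D: boundaries} is more explicit than the paper's one-line pointer, and is fine given that the Remark is stated in the paper.
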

\begin{proof}
    For (1), if there is only one $\xi$ point in a Shilov boundary, then $\red^{-1}\red{\xi}$ is the singleton $\{\xi\}$: otherwise, let $\xi'$ be a different point in it, one can find $f\in \KB^{\circ\setminus\circ\circ}$ such that $\abs{f}$ separates $\xi$ and $\xi'$; but this contradicts $\widetilde{f}\notin\red(\xi)$.

    In general, let $\xi$ be a point in a Shilov boundary $\Shbd\KB$, with reduction $\frak{p}$. One can find $f\in \KB^{\circ\setminus\circ\circ}$ such that $P(\abs{f})$ contains $\xi_i$ but not any other points in $\Shbd\KB$. By Lemma \ref{L: norm reduction localization Banach alg finite Shilov}(3), one has $\widetilde{\KB\{f^{-1}\}}=\widetilde{\KB}[\widetilde{f}^{-1}]$; thus there is only one point $\frak{p}$ in $\eta(\widetilde{\KB\{f^{-1}\}})$. As $\red^{-1}(\frak{p})$ is contained in $P(\abs{f})$, this inverse image can be computed using the $\red$ map on $\KB\{f^{-1}\}$ by Lemma \ref{L: norm reduction localization Banach alg finite Shilov}(1), and it is only one point by (1).

    The (2) follows from (1) and Proposition \ref{P: finite Shilov boundary basic}(2).
\end{proof}


\begin{construction}\label{Cs: Banach algebra decomposition}
    Let $B$ be a $k$-algebra, and $\{\abs{\ndot}_a\}_{a\in I}$ be a family of absolute values on $B$. For any non-empty $J\subset I$, denote by $\KB_J$ the Banach algebra of completion of the normed $k$-algebra $(B,\sup_{a\in J}\abs{\ndot}_a)$. The canonical embedding $\iota:\KB_{I}\rightarrow\obot_{a\in A}\KB_{a}$ is an injective and isometric homomorphism of uniform Banach algebras. It induces an injective homomorphism of $\widetilde{k}$-algebras after reduction: $\widetilde{\iota}: \widetilde{\KB_{I}}\rightarrow \prod_{a\in I}\widetilde{\KB_{a}}$.
\end{construction}

\section{Equidistribution measure}
In this part we define an equidistribution measure and calculate the determinant norm distorsion for semipositive metrics.

\subsection{Normed section algebra and disc domain}
We recall some basic constructions from our previous work \cite{Fan1} encoding a (possibly singular) semipositive metric in a Banach algebra norm (see also \cite{BJ1} and \cite{Reb1} for relevant constructions). A metric is \emph{semipositive} if it is a pointwise limit of non-archimedean Fubini-Study metrics; semipositive metrics are always lower semicontinuous in the sense that the function $x\mapsto \abs{s(x)}_{\phi}$ is upper semicontinuous (for any section $s$); it is possibly singular in the sense that it could be discontinuous or degenerate at some point.
\begin{construction}
    Let $X$ be an irreducible projective variety with $L$ an ample line bundle over it. Denote by $R(L)$ the graded algebra of sections, and by $\Cone(L)$ the affine cone associated to $(X,L)$, which is just $\spec R(L)$.
    
    Let $\phi$ be a metric on $L$. Denote by $\norm{\ndot}_{n\phi}$ the sup norms induced by $\phi$ on $R_n(L)$ and by $\algnorm{\ndot}_{\phi}=\sup_{n\in\bbN}\norm{\ndot}_{n\phi}$ the graded algebra norm on $R(L)$. Denote by $\KR(L,\phi)$ the \emph{normed section algebra}, which is the Banach algebra completion of $R(L)$ with respect to $\algnorm{\ndot}_{\phi}$. Its spectrum $\KM(\KR(L,\phi))$ is the \emph{disc domain} of $(X,L,\phi)$ (the unit disc bundle of $(L,\phi)^{\vee}$ with zero section contracted). Conversely, any graded algebra norm $\algnorm{\ndot}$ on $R(L)$ defines a (possibly singular) semipositive metric $\KP(\algnorm{\ndot})$ as the limit of $\frac{1}{n}\FS(\norm{\ndot}_{n})$. It is a (possibly discontinuous but non-degenerate) metric if and only if $\algnorm{\ndot}$ is bounded below by $\algnorm{\ndot}_{\psi}$ for some continuous metric $\psi$, or equivalently from the geometric point of view, the associated disc domain contains a neighbourhood of $\pmb{0}$.
\end{construction}

\begin{construction}
    Denote by $p:\V(L)\to X$ the tautological projection from the total space of $L^{\vee}$ and by $\KM(\phi)$ the image under $p^{\an}$ of the dual unit disc bundle $\overline{\bbD}^{\vee}(L,\phi)$ of $\phi$. The envelope metric $\KP(\phi)$ on $L$ is the limit of Fubini-Study metrics $\frac{1}{n}\FS(\norm{\ndot}_{n\phi})$. One can show that $\algnorm{\ndot}_{\KP(\phi)}$ coincides with $\algnorm{\ndot}_{\phi}$ hence the corresponding Banach algebras and spectra are the same. Geometrically, the envelope metric can also be characterised by identifying the holomorphic envelope $\KP(\KM(\phi))$ with its disc domain $\KM(\KP(\phi))=\KM(\KR(L,\phi))$. For any $x\in X^{\an}$, denote by $\gamma_{\phi}(x)\in\KM(\phi)$ the Gauss point of the dual unit disc over $x$; view $s$ as a function on $\V(L)$, one has
    $\abs{s(x)}_{\phi}=\abs{s(\gamma(x,\phi))}$.
\end{construction}

\begin{construction}\label{Cs: gradization}
    A norm $\norm{\ndot}$ on $R_{\sbullet}$ is \emph{graded} if $\norm{\cdot}=\norm{\cdot}^+$, where the later is the norm defined by $\sup_{n\in\bbN}\norm{(\cdot)_n}$ (whenever it's finite); for any point $z\in\KM(\KR)$, denote by $z^+$ the point corresponding to the absolute value $(\abs{\cdot}_z)^+$, which is still a point of $\KM(\KR,\phi)$ since $\algnorm{\ndot}_{\phi}$ is graded. By construction $\abs{\cdot}_z\leq \abs{\cdot}_{z^+}$, hence in any Shilov (or minimal) boundary, any point $z$ can be replaced by its graded version $z^+$.
\end{construction}

\begin{proposition}\label{P: algebra norm with prescribed Shilov boundary}
    Given any finite subset $\Sigma\subset \Cone(L)^{\an}_{\bir}$, one can construct a power-multiplicative algebra norm $\algnorm{\ndot}_{\Sigma}$ such that $\Shbd\KR(L,\algnorm{\ndot}_{\Sigma})$ contained in $\Sigma$. Moreover, the set $\KR(L,\algnorm{\ndot}_{\Sigma})$ coincide with the holomorphic envelope $\KP(\Sigma)$. If absolute values corresponding to points in $\Sigma$ are graded, so is the norm $\algnorm{\ndot}_{\Sigma}$.
\end{proposition}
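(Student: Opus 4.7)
The plan is to define $\algnorm{\cdot}_{\Sigma}$ as the pointwise maximum of the semi-absolute values at the prescribed points:
\[\algnorm{f}_{\Sigma} := \max_{a \in \Sigma} \abs{f(a)}, \qquad f \in R(L).\]
Under this definition the four claims (that $\algnorm{\cdot}_{\Sigma}$ is a power-multiplicative algebra norm, that its Shilov boundary sits inside $\Sigma$, that its spectrum coincides with $\KP(\Sigma)$, and that it is graded when each $a$ is) all reduce to unwinding the definitions of \S 2 together with the hypothesis that $\Sigma$ consists of birational points.

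First, I would check that $\algnorm{\cdot}_{\Sigma}$ is a genuine algebra norm. Each $a \in \Sigma$ defines a multiplicative ultrametric semi-norm on $R(L)$, and the assumption $a \in \Cone(L)^{\an}_{\bir}$ ensures that its kernel is $\{0\}$, hence it is an absolute value. A finite max of multiplicative ultrametric absolute values is ultrametric, sub-multiplicative, positive definite, and power-multiplicative. Denote the completion by $\KR_{\Sigma} := \KR(L, \algnorm{\cdot}_{\Sigma})$; by power-multiplicativity its spectral semi-norm agrees with $\algnorm{\cdot}_{\Sigma}$ on the dense image of $R(L)$.

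Next, for the Shilov-boundary statement, each $a \in \Sigma$ satisfies $\abs{f(a)} \leq \algnorm{f}_{\Sigma}$ and therefore extends by continuity to a bounded multiplicative character of $\KR_{\Sigma}$, furnishing a point of $\KM(\KR_{\Sigma})$ that I still call $a$. The identity $\algnorm{f}_{\Sigma} = \max_{a \in \Sigma} \abs{f(a)}$ on $R(L)$ says exactly that $\Sigma$ is a boundary in the sense of Definition \ref{D: boundaries}. Since $\Sigma$ is finite it is closed in $\KM(\KR_{\Sigma})$, so the Zorn-type argument described in the remark after Definition \ref{D: boundaries} produces a Shilov boundary contained in it; whence $\Shbd \KR_{\Sigma} \subseteq \Sigma$ (and by Proposition \ref{P: finite Shilov boundary pic isolated} it is then unique). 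For the spectrum identification, $\KP(\Sigma)$ is by Example \ref{Ex: envelope} the set of points $z \in (\spec R(L))^{\an}$ with $\abs{f(z)} \leq \max_{a \in \Sigma} \abs{f(a)} = \algnorm{f}_{\Sigma}$ for all $f \in R(L)$; this is precisely the condition for $z$ to extend continuously to a bounded multiplicative semi-norm on $\KR_{\Sigma}$, giving the desired bijection $\KM(\KR_{\Sigma}) = \KP(\Sigma)$.

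Finally, for gradedness, if each $\abs{\cdot(a)}$ is graded in the sense of Construction \ref{Cs: gradization}, then for any $f = \sum_n f_n$,
\[\algnorm{f}_{\Sigma} = \max_{a \in \Sigma} \sup_{n} \abs{f_n(a)} = \sup_{n} \max_{a \in \Sigma} \abs{f_n(a)} = \sup_{n} \algnorm{f_n}_{\Sigma},\]
since the outer max is over a finite set so the two suprema commute; hence $\algnorm{\cdot}_{\Sigma} = \algnorm{\cdot}_{\Sigma}^+$. I do not anticipate a substantive obstacle here: once the norm is presented as a finite pointwise maximum, the entire statement is a structural consequence of the Shilov-boundary and holomorphic-envelope formalism recalled in \S 2. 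The only point that requires care is the passage from ``$\Sigma$ is a closed boundary'' to ``some Shilov boundary lies in $\Sigma$'', which is handled by the Zorn argument already spelled out in the paper.
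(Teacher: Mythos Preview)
Your proposal is correct and follows the same approach as the paper: define $\algnorm{\ndot}_{\Sigma}=\max_{a\in\Sigma}\abs{\ndot}_a$, observe that this max of absolute values is power-multiplicative and that $\Sigma$ is a finite (hence closed) boundary, and conclude via the Zorn argument and Proposition~\ref{P: finite Shilov boundary pic isolated}. You simply unpack in more detail what the paper dismisses as ``by construction'' (the norm being genuine via birationality, the identification $\KM(\KR_{\Sigma})=\KP(\Sigma)$, and the gradedness computation).
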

\begin{proof}
    The norm $\max_{z\in\Sigma}\abs{\ndot}_z$ is power-multiplicative, and by construction, any element $f\in \KR(L,\algnorm{\ndot}_{\Sigma})$ takes maximal absolute value on $\Sigma$, hence there is a Shilov boundary $\Shbd\KR(L,\algnorm{\ndot}_{\Sigma})$ contained in this finite set. Thus the Shilov boundary is unique by Proposition \ref{P: finite Shilov boundary pic isolated}. This gives the first two assertions. The last assertion holds by construction.
\end{proof}

\subsection{Shilov finite line bundle metric}
\begin{definition}
    A metric $\phi$ on $L^{\an}$ is \emph{Shilov finite} if $\Shbd \KR(L,\phi)$ is a finite set of birational points.
\end{definition}
\begin{remark}
    This notion only depends on its induced uniform algebra norm on $R$, hence a metric $\phi$ is Shilov finite if and only if its envelope metric $\KP(\phi)$ is, because $\algnorm{\ndot}_{\phi}$ coincides with $\algnorm{\ndot}_{\KP(\phi)}$. A model metric $\phi_{\SX}$ (as well as its envelope) for some integral/formal model $\SX$ over $k^{\circ}$ is Shilov finite. More examples include locally affinoid metrics (to be discussed later).
\end{remark}

\begin{construction}\label{Cs: section algebra localize}
    For any $a\in \Shbd$, denote by $\KR(L)_a$ the graded Banach algebra completion of $(R(L),\abs{\ndot}_a)$. Consider the contractive homomoprhism of $k$-Banach algebras $\KR(L,\phi)\rightarrow \KR(L)_a$, denote by $\pi_a\KR(L,\phi)$ its image. By definition, elements in $[\pi_a\KR(L,\phi)]^{\sim}$ are classes in $\KR^{\circ}/(\KR^{\circ}\cap\KR_a^{\circ\circ})$, which are elements $\widetilde{s}\in [\KR(L,\phi)]^{\sim}$ such that $\widetilde{s}(\widetilde{a})\neq 0$; equivalently, they can be represented by $s\in\KR(L,\phi)$ such that $\algnorm{s}_{\phi}=1$ and $\abs{s}_a=1$, namely elements that takes maximum absolute value at $a$.
\end{construction}

\begin{construction}\label{Cs: section algebra decompose}
    Assume that $\Shbd \KR(L,\phi)$ is a finite set of birational points. The decomposition of sup norm $\algnorm{\ndot}_{\phi}=\sup_{a\in\Shbd}\abs{\ndot}_a$ implies that $\KR(L,\phi)=R(L)_{\Sh}$. From Construction \ref{Cs: Banach algebra decomposition}, one has universal embedding $\iota:  \KR(L,\phi)\rightarrow\obot_{a\in\Sh}\KR(L)_{a}$, and for any $a\in\Sh$ the Banach algebra $\pi_a\iota\big(\KR(L,\phi)\big)$ is just $\pi_a\KR(L,\phi)$. One also constructs an injective isometric homomorphism into a Banach $k$-subalgebra $\iota_-: \KR(L,\phi)\rightarrow\obot_{a\in\Sh}\pi_a\KR(L,\phi)$, as well as an injective homomorphism on reduction $\widetilde{k}$-algebras: $[\iota_-]^{\sim}: [\KR(L,\phi)]^{\sim}\rightarrow \prod_{a\in\Sh}[\pi_a\KR(L,\phi)]^{\sim}$. 
\end{construction}

\begin{proposition}\label{P: Shilov point of localized normed section algebra}
    For each $a\in\Shbd\KR(L,\phi)$, the Banach algebra $\KR(L)_a$ as well as $\pi_a\KR(L,\phi)$ has only one Shilov point, which is mappped to $a$ by the map on spectra induced by natural homomorphisms from $\KR(L,\phi)$.
\end{proposition}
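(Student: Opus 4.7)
The plan is to exploit that $a$ is birational, which forces $\abs{\ndot}_a$ to be a genuine (rather than merely semi-) absolute value on $R(L)$, and thus to extend multiplicatively to the completion. Since $\ker\abs{\ndot}_a=\{0\}$, the seminorm $\abs{\ndot}_a$ extends uniquely to a multiplicative norm on $\KR(L)_a$. In particular, $\KR(L)_a$ is a uniform Banach domain whose spectral seminorm coincides with its defining norm; the multiplicative extension itself defines a distinguished semi-absolute value $\hat{a}\in\KM(\KR(L)_a)$, and by construction $\abs{f(\hat{a})}=\norm{f}_{\Sp}$ for every $f\in\KR(L)_a$.

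First I would deduce that the singleton $\{\hat{a}\}$ is a closed boundary of $\KR(L)_a$, hence by Proposition \ref{P: finite Shilov boundary pic isolated} it is the unique Shilov boundary (being a one-point set it is trivially minimal among closed boundaries). To identify its image in $\KM(\KR(L,\phi))$, I would unwind the natural homomorphism $\sigma_a:\KR(L,\phi)\to\KR(L)_a$: on the dense graded subalgebra $R(L)$ it is the identity, so the pullback $\sigma_a^{\ast}(\hat{a})$ agrees on $R(L)$ with $\abs{\ndot}_a$; by continuity of both semi-absolute values on $\KR(L,\phi)$, they coincide, and the image is exactly the point $a$.

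Next I would apply the same argument verbatim to $\pi_a\KR(L,\phi)$: as a Banach subalgebra of $\KR(L)_a$ (or as the closed image of $\sigma_a$ equipped with the subspace norm) it inherits the multiplicative norm of $\KR(L)_a$, so its spectral norm is again multiplicative; the restriction of $\hat{a}$ to $\pi_a\KR(L,\phi)$ is a distinguished point realizing the spectral norm of every element, yielding a singleton Shilov boundary by the same reasoning. Its pullback via the factorisation $\KR(L,\phi)\twoheadrightarrow\pi_a\KR(L,\phi)\hookrightarrow\KR(L)_a$ of $\sigma_a$ is again $a$ by the identical density argument.

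The main obstacle is essentially bookkeeping rather than mathematics: one must verify that the three Banach algebras $\KR(L,\phi)$, $\pi_a\KR(L,\phi)$, and $\KR(L)_a$ assemble into a commuting diagram of contractive homomorphisms compatible with the identifications of Construction \ref{Cs: section algebra localize}, so that the pullback of $\hat{a}$ along each natural map is indeed the restriction of $\abs{\ndot}_a$ to the corresponding algebra. Once birationality is exploited to guarantee multiplicativity of the completed norm, no further analytic input is needed.
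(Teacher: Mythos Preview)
Your proposal is correct and takes essentially the same approach as the paper's (very terse) proof: both exploit the birationality of $a$ to conclude that the completed norm on $\KR(L)_a$ is multiplicative, hence equals the spectral norm, so the single point it defines is already a closed boundary and therefore the unique Shilov boundary; the identification with $a$ under pullback is immediate by density of $R(L)$. Your version simply makes explicit the steps the paper compresses into the phrase ``by construction, the sup norm on $\KM(\KR(L)_a)$ is $\abs{\ndot}_a$.'' One minor imprecision: you call $\pi_a\KR(L,\phi)$ a ``Banach subalgebra'' and parenthetically the ``closed image,'' but as defined in Construction~\ref{Cs: section algebra localize} it is just the image, which is dense in $\KR(L)_a$ and not obviously closed; this does not affect your argument, since the inherited norm is still multiplicative and the spectrum of a normed algebra agrees with that of its completion.
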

\begin{proof}
    By construction, the sup norm on $\KM(\KR(L)_a)$ is $\abs{\ndot}_a$, hence $a$ is its unique Shilov point. It is mapped to $a$ in $\KM(\KR(L,\phi))$ by the map induced by $\KR(L,\phi)\rightarrow \KR(L)_a$. Similar argument works for $\KM(\pi_a\KR(L,\phi))$.
\end{proof}

\begin{proposition}\label{P: compression section algebra}
    Assume that $\phi$ is Shilov finite. For each $a\in\Sh$, one can find a contraction $k$-linear graded subspace $\KC_a$ of the subspace $\pi_a\KR(L,\phi)$ in $\KR(L)_a$ with a contraction map $\kappa_a: \pi_a\KR(L,\phi)\rightarrow \KC_a$. The induced linear map $\kappa: \KR(L,\phi)\rightarrow \prod_{a\in\Sh}\KC_a$ is graded and is injective and isometric.
    
\end{proposition}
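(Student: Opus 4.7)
The plan is to apply Construction \ref{Cs: contraction subspace and map} degree-by-degree to produce graded local contractions $\KC_a$ and $\kappa_a$, and then to invoke Proposition \ref{P: join contraction is injective and isometric} to deduce injectivity and isometricity of the joint map $\kappa$.

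First I would reduce to the case where every $a\in \Sh$ is graded. By Proposition \ref{P: finite Shilov boundary pic isolated} the Shilov boundary is unique, while by Construction \ref{Cs: gradization} any $a$ may be replaced by its graded version $a^+$ inside any Shilov boundary; uniqueness then forces $a = a^+$, so $\abs{\ndot}_a$ is graded. Consequently the contractive homomorphism $\KR(L,\phi)\to \KR(L)_a$ respects gradings, and restricts in each degree $n\in\bbN$ to a contractive $k$-linear map between the finite-dimensional normed vector spaces $R_n(L)$ and $(\KR(L)_a)_n$. I would then apply Construction \ref{Cs: contraction subspace and map} in each such degree $n$ to obtain a contraction subspace $\KC_{a,n}\subset (\KR(L)_a)_n$ with contraction map $\kappa_{a,n}: \pi_a R_n(L)\to \KC_{a,n}$, and set $\KC_a := \obot_{n\in\bbN}\KC_{a,n}$ together with $\kappa_a := \bigoplus_n \kappa_{a,n}$. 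This is a graded contraction of $\pi_a\KR(L,\phi)$ in $\KR(L)_a$ by construction.

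For the joint map, the defining property of a Shilov boundary applied to the finite set $\Sh$ gives $\algnorm{\ndot}_\phi = \sup_{a\in \Sh}\abs{\ndot}_a$, so that $\KR(L,\phi)$ coincides with $\KR(L)_{\Sh}$ in the notation of Construction \ref{Cs: Banach algebra decomposition}. Proposition \ref{P: join contraction is injective and isometric}, applied to the family $\{\abs{\ndot}_a\}_{a\in\Sh}$ of norms on $R(L)$, then directly yields the desired injectivity and isometricity of $\kappa = \prod_a \kappa_a: \KR(L,\phi)\to \obot_{a\in\Sh}\KC_a$; gradedness of $\kappa$ is inherited from that of each $\kappa_a$.

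The main obstacle is the gradedness of the contraction data: Construction \ref{Cs: contraction subspace and map} depends on choices of $k$-bases and $\widetilde{k}$-bases and therefore does not, a priori, respect the grading. Performing the construction one degree at a time sidesteps this, crucially using that each $R_n(L) = H^0(X,L^n)$ is finite-dimensional so that the construction in each degree takes place in a finite-dimensional graded piece, and the orthogonal sum in $\KR(L)_a$ of these finite-dimensional pieces is automatically a graded Banach subspace.
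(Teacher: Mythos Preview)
Your proof is correct and follows essentially the same approach as the paper: the paper also invokes Construction \ref{Cs: contraction subspace and map} and handles gradedness by noting that the bases $\{e\}$ and $\{f\}$ can be chosen to be homogeneous elements, which is equivalent to your degree-by-degree construction. Your explicit reduction showing $a=a^+$ via uniqueness of the finite Shilov boundary and your citation of Proposition \ref{P: join contraction is injective and isometric} for injectivity and isometricity make explicit what the paper leaves implicit.
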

\begin{proof}
    This follows from Construction \ref{Cs: contraction subspace and map}; note that one can choose corresponding basis $\{e\}$ and $\{f\}$ to be homogeneous elements so as to induce a grading on $\KC_a$ by $n\in\bbN$ from $\KR(L)_a$ and to make $\kappa$ respecting the grading.    
\end{proof}

\begin{proposition}\label{P: inverse of multiplication bounded}
    Assume that $\phi$ is Shilov finite. Then for any $s$, on the images of the operator $(s\cdot)$ acting on $\obot\pi_a\KR(L,\phi)$, its inverse has an operator norm bounded from above by $\max_{a\in\Sh}\{\abs{s}_a^{-1}\}$. The same holds for $(s\cdot)$ acting on $\KR(L,\phi)$ and on $\obot_{a\in\Sh}\KC_a$.
\end{proposition}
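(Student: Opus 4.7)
The plan is to reduce the bound to a pointwise statement at each Shilov point $a$, where birationality guarantees multiplicativity of the absolute value, and then transfer to the other ambient spaces using the isometric embeddings that were already set up.

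First, I would exploit the birationality hypothesis: for each $a\in\Sh$, the seminorm $\abs{\ndot}_a$ has trivial kernel, hence extends to a genuine \emph{multiplicative} absolute value on the completed Banach algebra $\KR(L)_a$ (and thus on its subring $\pi_a\KR(L,\phi)$). Consequently, assuming $\abs{s}_a>0$ for every $a\in\Sh$ (otherwise the bound $\max_{a\in\Sh}\abs{s}_a^{-1}$ is $+\infty$ and there is nothing to prove), the operator $(s\cdot)$ on $\pi_a\KR(L,\phi)$ is injective and its inverse on the image acts as multiplication by the scalar $s^{-1}$ of $\KR(L)_a$, with operator norm exactly $\abs{s}_a^{-1}$.

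Second, I would pass to the orthogonal direct sum $\obot_{a\in\Sh}\pi_a\KR(L,\phi)$, where $(s\cdot)$ acts factor-wise and so does its inverse on the image. The sup-norm defining this orthogonal sum converts the pointwise factor bounds into
\[
\sup_{a\in\Sh}\abs{s}_a^{-1}\abs{u_a}_a\leq \max_{a\in\Sh}\{\abs{s}_a^{-1}\}\cdot\sup_{a\in\Sh}\abs{u_a}_a,
\]
which is the claimed bound on this direct sum.

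Third, to obtain the statement on $\KR(L,\phi)$ and on $\obot_{a\in\Sh}\KC_a$, I would invoke the injective isometric embeddings already at hand: $\iota:\KR(L,\phi)\hookrightarrow\obot_{a\in\Sh}\pi_a\KR(L,\phi)$ from Construction \ref{Cs: section algebra decompose}, and $\kappa:\KR(L,\phi)\hookrightarrow\obot_{a\in\Sh}\KC_a$ from Propositions \ref{P: join contraction is injective and isometric} and \ref{P: compression section algebra}. Both intertwine multiplication by $s$ on their respective images of $\KR(L,\phi)$, so the operator norm of the inverse on the image is preserved. The bound from the second step therefore descends to each of these ambient settings.

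The main delicate point is not the arithmetic but the well-definedness of ``inverse of $(s\cdot)$ on the image"; this requires injectivity of $(s\cdot)$, and that injectivity is precisely what the birationality of each point $a\in\Sh$ supplies, via the multiplicativity of $\abs{\ndot}_a$ on each factor. Everything else is a formal consequence of the orthogonal decomposition and the isometric nature of $\iota$ and $\kappa$.
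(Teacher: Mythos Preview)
Your approach---multiplicativity at each birational Shilov point, then a sup over factors, then descent to isometric subspaces---is exactly the paper's. The first two steps are correct and match the paper's computation
\[
\algnorm{s\cdot f}=\max_{a\in\Sh}\abs{s}_a\abs{\pi_a f}_a\ \geq\ \big(\min_{a\in\Sh}\abs{s}_a\big)\max_{a\in\Sh}\abs{\pi_a f}_a.
\]

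There is, however, a genuine slip in your third step for $\obot_{a\in\Sh}\KC_a$. The map $\kappa:\KR(L,\phi)\to\obot_{a}\KC_a$ is \emph{not} a ring homomorphism---each $\kappa_a$ is an orthogonal projection in the sense of Construction~\ref{Cs: contraction subspace and map}---so your claim that $\kappa$ ``intertwines multiplication by $s$'' is false. Moreover, even granting intertwining, the direction is wrong: $\kappa$ goes \emph{into} $\obot_a\KC_a$, so a bound established elsewhere would only transfer to the (possibly proper) image $\kappa(\KR(L,\phi))$, not to all of $\obot_a\KC_a$ as the statement requires.

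The fix is simpler than what you wrote and is what the paper actually does: each $\KC_a$ is by construction a $k$-linear subspace of $(\KR(L)_a,\abs{\ndot}_a)$, so $\obot_a\KC_a$ sits isometrically inside $\obot_a\KR(L)_a$. On the latter your step-two computation applies verbatim (multiplicativity of each $\abs{\ndot}_a$), and restricting an inequality $\abs{s\cdot u}_a\geq\abs{s}_a\abs{u}_a$ to a subspace costs nothing. No use of $\kappa$ is needed here; the paper's one-line ``they are isometric subspaces'' refers to the factor-wise inclusions $\KC_a\subset\KR(L)_a$ and $\iota_-:\KR(L,\phi)\hookrightarrow\obot_a\pi_a\KR(L,\phi)$, not to $\kappa$.
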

\begin{proof}
    For any element $f$ in $\obot\pi_a\KR(L,\phi)$, since 
    \[\algnorm{s\cdot f}_{\phi}=\algnorm{s\cdot f}_{\max\abs{\ndot}_a}=\max_{a\in\Sh}\{\abs{s\cdot f}_a\}=\max_{a\in\Sh}\{\abs{s}_a\abs{\pi_a f}_a\},\]
    the operator norm of the inverse operator $(s^{-1}\cdot)$ on the image is bounded from above by $\max_{a\in\Sh}\{\abs{s}_{a}^{-1}\}$, which finite as $\abs{s}_a\neq 0$ for all $a$ by the assumption. The same holds for $(s\cdot)$ acting on the other two Banach spaces as they are isometric subspaces.
\end{proof}

\begin{construction}
    On the reduction variety $\proj([\KR(L,\phi)]^{\sim})$, denoted by $\widetilde{X}$, the \emph{reduction line bundle} $\widetilde{L}$ is the tautological ample invertible sheaf from this reduction graded algebra, the set of irreducible components are denoted by $\{\widetilde{X}_a\}_{a\in\Sh}$, the restriction of $\widetilde{L}$ to these components are $\widetilde{L}_a$. Note that these reduction objects depend on the metric $\phi$. 
    
    Consider the section algebras $R(\widetilde{X}, \widetilde{L})$ and $R(\widetilde{X}_a, \widetilde{L}_a)$, along with the natural restriction map $r_a$. There are natural morphisms $\rho: [\KR(L,\phi)]^{\sim}\rightarrow R(\widetilde{X}, \widetilde{L})$ and $\rho_a: [\pi_a\KR(L,\phi)]^{\sim}\rightarrow R(\widetilde{X}_a, \widetilde{L}_a)$, defined by sending $\widetilde{s}$ to the corresponding section. For any $a\in\Sh$, $\rho_a$ is injective, as $\widetilde{s}(\widetilde{a})\neq 0$ for $\widetilde{s}\neq 0$ by the description of a non-zero element in $[\pi_a\KR(L,\phi)]^{\sim}$. 
    Denote $\dim_{\widetilde{k}}[\pi_a\KR(L,\phi)]^{\sim}$ by $\chi_{a}(n)$ and $\dim_{\widetilde{k}}[\KR(L,\phi)]^{\sim}$ by $\chi(n)$.
\end{construction}

\begin{proposition}\label{P: localization section algebra HS}
    Assume that $\Shbd \KR(L,\phi)$ is a finite set. Then the homomorphsim of graded $\widetilde{k}$-algebras $[\iota_-]^{\sim}: [\KR(L,\phi)]^{\sim}\rightarrow \prod_{a\in\Sh}[\pi_a\KR(L,\phi)]^{\sim}$ have codimension asymptotic to $C\cdot n^d$ at level $n$ as $n\to\infty$. The multiplicities defined by the following limits exist and satisfy \[\lambda_a:=\lim_{n\to\infty}\chi_a(n)/\chi(n),\quad \sum_a\lambda_a=1.\]
\end{proposition}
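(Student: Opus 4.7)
The plan is to exploit the purely algebraic structure of the reduced graded $\widetilde{k}$-algebra $\widetilde{\KR}:=[\KR(L,\phi)]^\sim$ and reduce to the classical geometric Hilbert--Samuel formula. I rely on three preliminary observations: first, $\widetilde{\KR}$ is reduced because $\algnorm{\ndot}_\phi$ is power-multiplicative; second, by Construction \ref{Cs: section algebra localize}, $[\pi_a\KR(L,\phi)]^\sim$ is canonically identified with $\widetilde{\KR}/\frak{p}_a$, so $[\iota_-]^\sim$ is exactly the canonical embedding of $\widetilde{\KR}$ into the product of its quotients by its minimal primes $\{\frak{p}_a\}_{a\in\Shbd}$ (indexed by $\Shbd$ via Proposition \ref{P: finite Shilov boundary basic}(3)); third, since the norms $\norm{\ndot}_{n\phi}$ admit orthonormal bases, $\chi(n)=\dim_{\widetilde{k}}\widetilde{\KR}_n$ coincides with the geometric Hilbert--Samuel function $\dim_k R_n(L)$, so $\chi(n)\sim (c_1(L)^d/d!)\,n^d$ by classical HS, and $\widetilde{\KR}$ has Krull dimension $d+1$.

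With this in hand, I apply classical Hilbert--Samuel to each graded integral domain $\widetilde{\KR}/\frak{p}_a$: writing $d_a=\dim\widetilde{X}_a$, one obtains $\chi_a(n)\sim c_a\,n^{d_a}/d_a!$ for some $c_a\geq 0$ (positive whenever $d_a\geq 1$). Injectivity of $[\iota_-]^\sim$ gives $\chi(n)\leq\sum_a \chi_a(n)$, which combined with $\chi(n)\sim n^d$ forces $\max_a d_a=d$. The limit $\lambda_a=\lim \chi_a(n)/\chi(n)$ therefore exists: it is $0$ when $d_a<d$ and $c_a/c_1(L)^d$ when $d_a=d$.

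For the normalisation $\sum_a\lambda_a=1$, I estimate the cokernel of $[\iota_-]^\sim$ via the \v{C}ech exact sequence attached to the irreducible decomposition of $\spec\widetilde{\KR}$:
\[
0 \to \widetilde{\KR} \to \prod_a \widetilde{\KR}/\frak{p}_a \to \prod_{a<b} \widetilde{\KR}/(\frak{p}_a+\frak{p}_b) \to \cdots.
\]
The higher terms compute sections on pairwise (and higher) intersections $\widetilde{X}_a\cap\widetilde{X}_b$, which have Krull dimension at most $d-1$ by standard commutative algebra for reduced Noetherian schemes. Hence their contributions at level $n$ grow as $O(n^{d-1})=o(n^d)$, which forces $\chi(n)\sim\sum_a\chi_a(n)$, and $\sum_a\lambda_a=1$ follows.

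The main obstacle I anticipate is the finite generation / Noetherianity of $\widetilde{\KR}$ over $\widetilde{k}$, required both to assign a Krull dimension and to apply classical Hilbert--Samuel uniformly to each quotient. While $R(L)$ is finitely generated over $k$, the reduction $\widetilde{\KR}$ need not be generated by the reductions of generators of $R(L)$, because products can undergo strict norm drops that annihilate their reductions. A safer workaround is to use the injections $\rho_a:\widetilde{\KR}/\frak{p}_a\hookrightarrow R(\widetilde{X}_a,\widetilde{L}_a)$ into the honestly finitely generated section rings of the reduction varieties to bound $\chi_a(n)$ from above, and to match with lower bounds extracted from $\chi(n)\sim n^d$ via the product embedding.
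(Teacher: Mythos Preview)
Your argument is essentially the paper's own proof, reorganised. The paper routes through the commutative square $(\prod_a r_a)\circ\rho = (\prod_a\rho_a)\circ[\iota_-]^{\sim}$ with the section rings $R(\widetilde{X},\widetilde{L})$ and $R(\widetilde{X}_a,\widetilde{L}_a)$ as intermediaries, bounding the codimension of $[\iota_-]^{\sim}$ by the codimensions of $\rho$ and $\prod_a r_a$; you instead identify $[\iota_-]^{\sim}$ directly with the diagonal embedding $\widetilde{\KR}\hookrightarrow\prod_a\widetilde{\KR}/\frak{p}_a$ and control the cokernel by the \v{C}ech tail. These are the same idea: the cokernel of $\prod_a r_a$ in the paper's square is governed by the intersections $\widetilde{X}_a\cap\widetilde{X}_b$, which is exactly your \v{C}ech term. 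Your ``workaround'' via the injections $\rho_a$ into the section rings $R(\widetilde{X}_a,\widetilde{L}_a)$ is precisely the paper's main route.

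Two remarks. First, you are more careful than the paper about the growth rate: you correctly extract $O(n^{d-1})=o(n^d)$ for the cokernel (which is what is actually needed for $\sum_a\lambda_a=1$), whereas the paper writes $O(n^d)$ at several places even though its own argument via Krull dimension~$\leq d$ of the intersection section rings yields $O(n^{d-1})$. Second, you rightly flag finite generation of $\widetilde{\KR}$ as the sticking point for invoking classical Hilbert--Samuel on $\widetilde{\KR}/\frak{p}_a$ and on the \v{C}ech terms; the paper does not address this explicitly either, but simply asserts the relevant Krull-dimension bounds and polynomial Hilbert functions. Your proposed remedy (sandwich via $\rho_a$ against the honestly geometric $R(\widetilde{X}_a,\widetilde{L}_a)$ and pull lower bounds from $\chi(n)\leq\sum_a\chi_a(n)$) is the right instinct and coincides with what the paper does, though making it fully rigorous still requires knowing that $\widetilde{X}=\proj\widetilde{\KR}$ behaves well enough to speak of degrees of its components; this subtlety is present in both arguments.
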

\begin{proof}
    Using the commutative diagram given by $(\prod_a r_a)\circ\rho=(\prod_a \rho_a)\circ[\iota_-]^{\sim}$ and the injectivity of $\prod_a\rho_a$ and $[\iota_-]^{\sim}$, one bounds from the above the codimension of $[\iota_-]^{\sim}$ by the sum of codimensions of $\rho$ and $\prod_a r_a$. 
    
    As $\widetilde{L}$ is ample, and the graded $k$-algebra $R(L)$ is of Krull dimension $(d+1)$, the graded $\widetilde{k}$-algebras $[\KR(L,\phi)]^{\sim}$ and $R(\widetilde{X}_a, \widetilde{L}_a)$ are of Krull dimensions at most $(d+1)$, and section algebra on intersections $R(\widetilde{X}_a\cap \widetilde{X}_b, \widetilde{L}_{ab})$ is of Krull dimension at most $d$. Thus $\prod_a r_a$ has codimension $O(n^d)$. As $[\KR(L,\phi)]^{\sim}$ is reduced with its HS function being a polynomial, $\rho$ has codimension at most $O(n^d)$. Finally the codimension of $[\iota_-]^{\sim}$ is of order $O(n^d)$.

    The multiplicity $\lambda_a$ exists, and coincides with the leading term of the geometric HS polynomial for $R(\widetilde{X}_a,\widetilde{L}_a)$.
\end{proof}
\begin{remark}\label{R: codim bound}
    Note that the constant $C$ in the codimension bound only depends on $(L,\phi)$, though do not have an explicit expression.
\end{remark}

\subsection{Equidistribution measure and determinant norm distorsion}~
\subsubsection{Shilov finite case}
\begin{definition}\label{D: equidistribution measure Shilov finite}
    Let $\phi$ be a metric on on $L^{\an}$ which is Shilov finite. The probability measure $\sum_{a\in \Shbd \KR(L,\phi)}\lambda_a\delta_a $ on $\KM(\KR(L,\phi))$ is called the \emph{equidsitribution measure} of $\phi$ and is denoted by $\mu_{\eq}(\phi)$. One also use this name for the measure $p^{\an}_*\mu_{\eq}(\phi)$ on $X^{\an}$.
\end{definition}
\begin{remark}
    If $\phi$ is a diagonalizable Fubini-Study metric (i.e. an ample model metric as $\abs{k}=\bbR$), then $\mu_{\eq}(\phi)=\MA(\phi)$ by \cite[Théorème 6.9.3]{CLD}.
\end{remark}

\begin{theorem}\label{T: equidistribution limit}
    Let $\phi$ be a Shilov finite metric on $L^{\an}$. Let $s\in R_1(L)$ be a section, then the following limit formula holds
    \[\lim_{n\to\infty}\frac{1}{\chi(n)}\log~ \norm{\bigwedge^{\chi(n)}(\cdot s)}_{\hom,\phi}=\int_{\KM(\KR(L,\phi))}\log\abs{s}\mu(\phi)=\int_{X^{\an}}\log\abs{s}_{\phi}\ p^{\an}_*\mu(\phi).\]
\end{theorem}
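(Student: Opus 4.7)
The plan is to build an orthonormal basis of $\KR(L,\phi)_n$ consisting, up to $o(\chi(n))$ extra vectors, of elements peaking sharply at a single Shilov point, and to compute the operator norm of $\bigwedge^{\chi(n)}(\cdot s)$ directly by exploiting the multiplicativity of each $\abs{\ndot}_a$ on $\pi_a\KR(L,\phi)$. For each $a\in\Shbd\KR(L,\phi)$, Proposition \ref{P: finite Shilov boundary basic}(2) supplies a homogeneous peak function $f_a\in R_{m_a}(L)$ with $\abs{f_a}_a=1$ and $\abs{f_a}_b\leq c<1$ for $b\neq a$ (uniform $c$); fix $N$ large enough that $c^N<\min_a\abs{s}_a/\max_a\abs{s}_a$, which is possible as $s\neq 0$ and each $\abs{\ndot}_a$ is an absolute value (since $a$ is birational). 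For each large $n$ and each $a$, lift an orthonormal basis of $\KC_{a,n-Nm_a}\subset(\KR(L)_{a,n-Nm_a},\abs{\ndot}_a)$ to elements $\eta^{(n)}_{a,i}\in R_{n-Nm_a}(L)\subset\KR(L,\phi)^{\circ}$ with $\abs{\eta^{(n)}_{a,i}}_a=\norm{\eta^{(n)}_{a,i}}_{(n-Nm_a)\phi}=1$ (such lifts exist by the description of $[\pi_a\KR(L,\phi)]^{\sim}$ in Construction \ref{Cs: section algebra localize}), and set $w^{(n)}_{a,i}:=f_a^N\eta^{(n)}_{a,i}\in R_n(L)$, so that $\abs{w^{(n)}_{a,i}}_a=1$, $\abs{w^{(n)}_{a,i}}_b\leq c^N$ for $b\neq a$, and $\norm{w^{(n)}_{a,i}}_{n\phi}=1$.

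A direct ultrametric dominance argument—at the Shilov point $a_0$ where $\abs{c_{a,i}}$ is maximised in a given linear combination, the diagonal contribution strictly outweighs the cross terms thanks to the factor $c^N<1$—shows that the whole family $\{w^{(n)}_{a,i}\}_{(a,i)}$ is orthonormal in $\KR(L,\phi)_n$. The same argument, using the stronger inequality $c^N<\min\abs{s}_a/\max\abs{s}_a$, yields $\norm{sw^{(n)}_{a,i}}_{(n+1)\phi}=\abs{s}_a$ and orthogonality of $\{sw^{(n)}_{a,i}\}_{(a,i)}$ in $\KR(L,\phi)_{n+1}$ with weights $\abs{s}_a$. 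Linear independence forces $\dim F_n\leq\chi(n)$ for $F_n:=\mathrm{span}\{w^{(n)}_{a,i}\}$; combining Proposition \ref{P: localization section algebra HS} (in particular $\sum_a\lambda_a=1$) with the elementary asymptotic $\chi(n-Nm_a)/\chi(n)\to 1$ yields $M_n:=\chi(n)-\dim F_n=o(\chi(n))$. By the weighted orthogonality of $\{sw^{(n)}_{a,i}\}$, one computes
\[
\norm{\bigwedge\nolimits^{\dim F_n}(\cdot s)|_{F_n}}_{\hom,\phi}=\prod_a\abs{s}_a^{\chi_a(n-Nm_a)}.
\]
Now apply Proposition \ref{P: det norm distorsion subspace} to the invertible operator $(\cdot s)\colon\KR(L,\phi)_n\to s\KR(L,\phi)_n$ with subspace $F_n$ and constant $C=\max(\algnorm{s}_{\phi},(\min_a\abs{s}_a)^{-1})$, where the upper bound comes from the sub-multiplicativity of $\algnorm{\ndot}_{\phi}$ and the lower from Proposition \ref{P: inverse of multiplication bounded}: this gives
\[
\bigabs{\log\norm{\bigwedge\nolimits^{\chi(n)}(\cdot s)}_{\hom,\phi}-\sum_a\chi_a(n-Nm_a)\log\abs{s}_a}\leq M_n\log C=o(\chi(n)).
\]
Dividing by $\chi(n)$ and invoking $\chi_a(n-Nm_a)/\chi(n)\to\lambda_a$ yields $\lim_{n}\frac{1}{\chi(n)}\log\norm{\bigwedge^{\chi(n)}(\cdot s)}_{\hom,\phi}=\sum_a\lambda_a\log\abs{s}_a=\int\log\abs{s}\,d\mu_{\eq}(\phi)$.

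The main obstacle is the orthogonality verification for $\{sw^{(n)}_{a,i}\}$ in the sup-norm $\norm{\ndot}_{\phi}=\max_a\abs{\ndot}_a$: it requires careful bookkeeping of cross-Shilov contributions at each $\abs{\ndot}_b$ and the precise choice of $N$ so that $c^N$ dominates all ratios $\abs{s}_a/\abs{s}_b$. This is exactly where the Shilov finite hypothesis enters critically, as the peak functions are provided by Proposition \ref{P: finite Shilov boundary basic}(2). A secondary subtlety is controlling the cokernel $M_n$, which relies on the identity $\sum_a\lambda_a=1$; the codimension bound of order $n^d$ in Proposition \ref{P: localization section algebra HS} is not a priori small compared to $\chi(n)\sim c\,n^d$, but the equality $\sum_a\lambda_a=1$ forces the difference to be genuinely $o(\chi(n))$, which is what makes the final division by $\chi(n)$ work.
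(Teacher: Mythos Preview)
Your proof is correct and takes a genuinely different route from the paper's. The paper embeds $\KR(L,\phi)_n$ isometrically into the \emph{larger} space $\obot_{a\in\Sh}\KC_{a,n}$ via the contraction map $\kappa$ of Proposition~\ref{P: compression section algebra}, then computes the determinant of $(\cdot s)$ on each factor $\KC_{a,n}$ inside the valued field $(\KR(L)_a,\abs{\ndot}_a)$ by a direct appeal to Proposition~\ref{P: VF extension operator norm}; the discrepancy between the two spaces is then absorbed by the same codimension estimate from Proposition~\ref{P: localization section algebra HS} combined with Proposition~\ref{P: det norm distorsion subspace}. You instead work from \emph{inside}: you build a concrete subspace $F_n\subset R_n(L)$ spanned by explicit peak-localized sections $f_a^N\eta^{(n)}_{a,i}$, verify orthogonality of both $\{w^{(n)}_{a,i}\}$ and $\{sw^{(n)}_{a,i}\}$ by a direct ultrametric dominance argument at each Shilov point, and bound the codimension $M_n=\chi(n)-\dim F_n$ via $\sum_a\lambda_a=1$. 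Your approach bypasses the abstract contraction machinery (Construction~\ref{Cs: contraction subspace and map}, Proposition~\ref{P: join contraction is injective and isometric}, Proposition~\ref{P: compression section algebra}) in favour of the peak functions supplied by Proposition~\ref{P: finite Shilov boundary basic}(2); this renders the analytic localization visible as an explicit orthonormal basis, at the cost of the bookkeeping for the exponent $N$ and the cross-term estimates you flag at the end. The paper's route is shorter and avoids choosing $N$, but your argument has the merit of producing genuine peak sections in $R_n(L)$ and of making transparent why orthogonality of $\{sw^{(n)}_{a,i}\}$---not merely of $\{w^{(n)}_{a,i}\}$---forces the sharper condition $c^N<\min_a\abs{s}_a/\max_a\abs{s}_a$.
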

\begin{proof}
    It suffices to prove the first equality. The basic idea is to compute the operator norm on a slightly bigger space $\obot_a \KC_a$, where the operator $(\cdot s)$ can be analytically localized and decoupled. Thanks to Proposition \ref{P: det norm distorsion subspace} and Proposition \ref{P: localization section algebra HS}, the error will be of order $C\cdot n^{d}$ where $C$ is a constant independent of $n$, hence has no effect in the limit.

    For each Shilov point $a$, consider the normed $k$-vector subspace $\KC_a$ in $(\widehat{\kappa}(a),\abs{\ndot}_a)$ and the linear map $(\cdot s)$. By Proposition \ref{P: VF extension operator norm}, the operator norm of $\bigwedge(\cdot s)|_{\KC_a}$ is equal to $\abs{s}_a^{\chi_{a}(n)}$, whose $\log$ gives the summands on the right hand side.
    

\end{proof}
\begin{corollary}
    Same setting as above. Denote by $\mathrm{err}(n)$ the error term in the above limit equality. Then as $n\to\infty$, it holds \[\inf_{a\in\Shbd}\{\log\abs{s}_a\}\frac{C}{n}\leq\mathrm{err}(n)\leq\sup_{a\in\Shbd}\{\log\abs{s}_a\}\frac{C}{n} \]
    where $C$ is a constant only depending on $(L,\phi)$, not on $s$.
\end{corollary}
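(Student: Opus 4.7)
The plan is to retrace the proof of Theorem \ref{T: equidistribution limit} while quantifying the two sources of error. Decompose
\[\mathrm{err}(n) = \frac{1}{\chi(n)}\Bigl[\log\norm{\bigwedge\nolimits^{\chi(n)}(\cdot s)}_{\hom,\phi} - \sum_{a\in\Shbd}\chi_a(n)\log\abs{s}_a\Bigr] + \sum_{a\in\Shbd}\Bigl(\frac{\chi_a(n)}{\chi(n)} - \lambda_a\Bigr)\log\abs{s}_a;\]
the first bracket is the Banach-theoretic defect from passing from $\KR(L,\phi)_n$ to the decoupled space $E_n = \obot_{a}\KC_a^n$, on which Proposition \ref{P: VF extension operator norm} evaluates the top exterior-power norm (as a product over $a$) to $\prod_a\abs{s}_a^{\chi_a(n)}$, and the second sum is the Hilbert--Samuel convergence defect.

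For the first bracket I would apply Proposition \ref{P: det norm distorsion subspace} to $S=(\cdot s)$ with $\KR(L,\phi)_n$ isometrically embedded in $E_n$ via the contraction $\kappa$ of Proposition \ref{P: compression section algebra}. The constant in that proposition can be taken one-sidedly: $\norm{(\cdot s)}_{\hom}=\max_a\abs{s}_a$, and by Proposition \ref{P: inverse of multiplication bounded} $\norm{(\cdot s)^{-1}}_{\hom}\leq\max_a\abs{s}_a^{-1}$, so the codimension contribution is bounded above by $m_n\sup_a\log\abs{s}_a$ and below by $m_n\inf_a\log\abs{s}_a$, where $m_n = \dim E_n-\dim F_n = \sum_a\chi_a(n)-\chi(n)$. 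The crucial quantitative refinement is that although Proposition \ref{P: localization section algebra HS} records codimension $O(n^d)$, the identity $\sum_a\lambda_a=1$ forces cancellation of the degree-$d$ leading term, yielding in fact $m_n=O(n^{d-1})$; dividing by $\chi(n)\sim cn^d$ then produces the $1/n$ rate with an implicit constant depending only on $(L,\phi)$ (cf. Remark \ref{R: codim bound}).

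For the second sum, both $\chi_a(n)$ and $\chi(n)$ are polynomials of degree $d$ in $n$ with matching leading coefficients up to the factor $\lambda_a$ by Proposition \ref{P: localization section algebra HS}, hence $\chi_a(n)/\chi(n) - \lambda_a$ is $O(1/n)$ with constants depending only on $(L,\phi)$. Grouped by sign, the positive and negative parts of these coefficients can again be absorbed into the two extrema $\sup_a\log\abs{s}_a$ and $\inf_a\log\abs{s}_a$ respectively.

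The main obstacle is packaging both contributions into the asymmetric form $\inf_a\{\log\abs{s}_a\}\,C/n \leq \mathrm{err}(n) \leq \sup_a\{\log\abs{s}_a\}\,C/n$ rather than a symmetric magnitude bound $\abs{\mathrm{err}(n)}\leq C\max_a\abs{\log\abs{s}_a}/n$. This requires carefully pairing each one-sided inequality in Proposition \ref{P: det norm distorsion subspace} (the $C^m$ side versus the $C^{-m}$ side) with the correct extremum of $\log\abs{s}_a$ coming from $\norm{(\cdot s)}_{\hom}$ respectively $\norm{(\cdot s)^{-1}}_{\hom}$, and doing analogous sign-tracking for the polynomial remainder in the second sum so that the upper bound draws only on ``dilation'' estimates and the lower bound only on ``contraction'' estimates. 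Once this is done, the constant $C$ is obtained as the sum of the limit of $m_n/n^{d-1}$ divided by the leading coefficient of $\chi$ and the analogous polynomial-remainder constant, both depending only on $(L,\phi)$.
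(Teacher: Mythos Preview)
Your approach is exactly the paper's---its entire proof reads ``This is a consequence of Proposition \ref{P: det norm distorsion subspace} and Remark \ref{R: codim bound}''---but you have supplied the quantitative step the paper leaves implicit: Remark \ref{R: codim bound} literally records only $m_n=O(n^d)$, which by itself gives an $O(1)$ error after dividing by $\chi(n)$; your observation that $\sum_a\lambda_a=1$ forces the degree-$d$ terms of $\sum_a\chi_a(n)$ and $\chi(n)$ to match, hence $m_n=O(n^{d-1})$, is precisely what is needed for the $1/n$ rate (and is indeed extractable from the proof of Proposition \ref{P: localization section algebra HS}, where the intersection algebras have Krull dimension $\leq d$). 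One caution: $\chi_a(n)=\dim_{\widetilde{k}}[\pi_a\KR]^{\sim}_n$ is not obviously a polynomial in $n$, only sandwiched between polynomials up to $O(n^{d-1})$, so phrase the second-sum estimate accordingly. The asymmetric sign packaging you flag as the main obstacle is genuine and is not addressed by the paper's one-line argument either; the statement is best read as an $O(1/n)$ bound with the $s$-dependence factored through $\inf_a\log\abs{s}_a$ and $\sup_a\log\abs{s}_a$, rather than a sharp inequality with a single constant $C$ on both sides.
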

\begin{proof}
    This is a consequence of Proposition \ref{P: det norm distorsion subspace} and Remark \ref{R: codim bound}.
\end{proof}

Recall that for two graded algebra norms $\algnorm{\ndot}$ and $\algnorm{\ndot}'$  on $R$, one can define the following two pseudo-distances, which satisfies $d_1\leq d_{\infty}$
\[d_{\infty}(\algnorm{\ndot},\algnorm{\ndot}'):=\lim\frac{1}{n}\dist(\norm{\ndot}_{n}, \norm{\ndot}'_{n}),d_{1}(\algnorm{\ndot},\algnorm{\ndot}'):=\lim\frac{1}{\chi(n)}\dist(\bigwedge^{\chi(n)}\norm{\ndot}_n, \bigwedge^{\chi(n)}\norm{\ndot}'_n).\]

\begin{construction}\label{Cs: d_1-topology}
    Let $\Prob_{\fin,\bir}$ be the set of probability measures on $\Cone(L)^{\an}$ whose support is a finite set of birational points. The following formula
    \[d(\mu,\nu):=\sup_{n\in\bbN, s\in R_n(L)\setminus\{0\}}\abs{\int_{\Cone(L)^{\an}}\frac{1}{n}\log\abs{s}(\mu-\nu)}\]
    defines a pseudo-distance satisfying the (Archimedean) triangle inequality.  
\end{construction}

\begin{proposition}\label{P: continuity of equidistribution measure}
    Let $\phi_{1,2}$ be two Shilov finite metrics on $L^{\an}$, then it holds
    \[d(\mu_{\eq}(\phi_1),\mu_{\eq}(\phi_1))\leq 2 d_1(\algnorm{\ndot}_{\phi_1},\algnorm{\ndot}_{\phi_2}).\]
    The map $\algnorm{\ndot}\mapsto \lim_{n\to\infty}\frac{1}{\chi(n)}\log~ \norm{\bigwedge^{\chi(n)}(s\cdot)|_{R_n}}_{\hom, \FS_{\algnorm{\ndot}}}$ is continuous (for both the topologies induced by $d_1$ or $d_{\infty}$).
\end{proposition}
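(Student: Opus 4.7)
The plan is to apply Theorem~\ref{T: equidistribution limit} on both sides and then bound the resulting difference of log operator norms by distances of determinant norms. The theorem extends routinely from $s\in R_1(L)$ to general $s\in R_n(L)$ (the reduction argument only uses ampleness and the Shilov-finite structure), giving
\[\int_{\Cone(L)^{\an}}\log\abs{s}\,\mu_{\eq}(\phi_i) = \lim_{m\to\infty}\frac{1}{\chi(m)}\log\norm{\bigwedge^{\chi(m)}(\cdot s)}_{\hom,\phi_i},\]
so that the quantity $\int\log\abs{s}(\mu_{\eq}(\phi_1) - \mu_{\eq}(\phi_2))$ appearing in the definition of $d$ is the limit of the difference of log operator norms, normalized by $\chi(m)$.

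The key identity, valid for any basis $\{e_i\}_{i=1}^{\chi(m)}$ of $R_m$, is
\[\log\norm{\bigwedge^{\chi(m)}(\cdot s)}_{\hom,\phi} = \log\det\norm{\ndot}_{(m+n)\phi}|_{sR_m}(s e_1\wedge\cdots\wedge s e_{\chi(m)}) - \log\det\norm{\ndot}_{m\phi}(e_1\wedge\cdots\wedge e_{\chi(m)}),\]
which follows from the ultrametric orthogonality available over a spherically complete field. Taking the difference between $\phi_1$ and $\phi_2$ naturally splits into a level-$m$ comparison and a level-$(m+n)$ subspace comparison. For the latter, I apply Proposition~\ref{P: det norm distorsion subspace} with $S=\mathrm{id}:(R_{m+n},\norm{\ndot}_{(m+n)\phi_1})\to(R_{m+n},\norm{\ndot}_{(m+n)\phi_2})$ and $F=sR_m$, replacing the subspace comparison by the full-space determinant comparison at level $m+n$, plus a correction of order $(\chi(m+n)-\chi(m))\cdot\dist(\norm{\ndot}_{(m+n)\phi_1},\norm{\ndot}_{(m+n)\phi_2})$.

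After dividing by $\chi(m)$ and letting $m\to\infty$, the two main contributions (at levels $m$ and $m+n$) each converge to $d_1(\algnorm{\ndot}_{\phi_1},\algnorm{\ndot}_{\phi_2})$, using $\chi(m+n)/\chi(m)\to 1$, producing the coefficient $2$. Taking the supremum over $s\in R_n\setminus\{0\}$ and $n\in\mathbb{N}$ (with the $1/n$ normalization built into $d$) then yields the first inequality. The continuity statement follows directly: the map in question is $2$-Lipschitz for the $d_1$-topology by the first inequality, and continuity for the $d_\infty$-topology is a consequence of $d_1\leq d_\infty$. The main technical obstacle is showing that the codimension correction $(\chi(m+n)-\chi(m))\cdot\dist(\norm{\ndot}_{(m+n)\phi_1},\norm{\ndot}_{(m+n)\phi_2})/\chi(m)$ is subsumed into the $2d_1$ bound in the asymptotic limit; this exploits the fact that the codimension $\chi(m+n)-\chi(m)=O(m^{d-1})$ is of strictly lower order than $\chi(m)\sim m^d$, so that after normalization the error contributes only at subleading order relative to the leading determinant norm comparisons.
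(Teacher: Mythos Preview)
Your approach coincides with the paper's: both apply Proposition~\ref{P: operator norm changes} to the operator $\bigwedge^{\chi(m)}(\cdot s)$, splitting the difference of log operator norms into a source term $B(m)=\dist\big(\bigwedge^{\chi(m)}\norm{\ndot}_{m\phi_1},\bigwedge^{\chi(m)}\norm{\ndot}_{m\phi_2}\big)$ and a target term $A(m)$ on the subspace $sR_m\subset R_{m+n}$. Your ``key identity'' followed by subtraction is exactly this application of Proposition~\ref{P: operator norm changes}. The paper then simply asserts the result; you go one step further and invoke Proposition~\ref{P: det norm distorsion subspace} to replace the subspace term $A(m)$ by the full level-$(m+n)$ determinant distance plus a correction.

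There is a genuine gap in your asymptotic claim for that correction. It reads
\[
\frac{\chi(m+n)-\chi(m)}{\chi(m)}\cdot\dist\big(\norm{\ndot}_{(m+n)\phi_1},\norm{\ndot}_{(m+n)\phi_2}\big),
\]
and you argue it is subleading because the first factor is $O(m^{-1})$. But you have overlooked the growth of the second factor: $\dist(\norm{\ndot}_{k\phi_1},\norm{\ndot}_{k\phi_2})$ is not bounded in $k$; it grows linearly, with slope $d_\infty(\algnorm{\ndot}_{\phi_1},\algnorm{\ndot}_{\phi_2})$. Since $\chi(m+n)-\chi(m)\sim dn\,m^{d-1}$ while $\chi(m)\sim m^d$, the product is of order $\tfrac{dn}{m}\cdot m\, d_\infty = dn\cdot d_\infty$; after the $1/n$ normalization in the definition of $d$ it contributes a term of size roughly $d\cdot d_\infty$ --- leading order, not subleading. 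Your argument therefore yields only a bound of the shape $d(\mu_{\eq}(\phi_1),\mu_{\eq}(\phi_2))\leq 2d_1 + C\cdot d_\infty$, which gives the $d_\infty$-continuity but not the sharper $d_1$-Lipschitz inequality stated. To recover the $2d_1$ bound one must control $A(m)/\chi(m)$ without passing through the codimension estimate of Proposition~\ref{P: det norm distorsion subspace}.
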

\begin{proof}
    Write $\phi_i=\FS(\algnorm{\ndot}_i)$ for $i=1,2$, by Proposition \ref{P: operator norm changes}, it holds that  
    \[ \big|\log~ \norm{\bigwedge^{\chi(n)}(\cdot s)}_{\hom,\phi_1}-\log~ \norm{\bigwedge^{\chi(n)}(\cdot s)}_{\hom,\phi_2}\big|\leq A(n)+B(n),\forall n\in\bbN, \] 
    \[A(n)=\dist(\bigwedge^{\chi(n)}\norm{\ndot}_{(n+1)\phi_1}|_{sR_n}, \bigwedge^{\chi(n)}\norm{\ndot}_{(n+1)\phi_1}|_{sR_n}),\ B_n=\dist(\bigwedge^{\chi(n)}\norm{\ndot}_{n\phi_1}, \bigwedge^{\chi(n)}\norm{\ndot}_{n\phi_1}),\]
    by taking sup for $s$ and limit for $n$, one gets the desired inequality. The second assertion also follows from the above estimate.
\end{proof}

\subsubsection{General case}
\begin{construction}\label{Cs: function cone}
    Consider the topological space $\Cone(L)^{\an}$ and functions on it. Let $\overline{\underline{\bbR}}$ be the extended real number $\bbR\cup\{\pm\infty\}$. Denote by $V$ the $\bbR$-vector space generated by functions (valued in $\overline{\underline{\bbR}}$) of the form $\log\abs{s}$ for all homogeneous $s\in R_n\setminus\{0\}$. For some subset $\Delta$, denote by $\theta_{\Delta}(\ndot)$ the sup norm ($\overline{\underline{\bbR}}$-valued) on $\Delta$. 
    
    If all points of $\Delta$ are birational, then $\theta_{\Delta}$ is a (usual $\bbR$-valued) norm on $V$; its completion, denoted by $\KV_{\Delta}$, is a Banach space, which admits an isometric injective linear map to the Banach space $\KC(\Delta)$: associate to a Cauchy sequence $\{v_i\}_{i\in\bbN}$ in $(V,\theta_{\Delta})$ the element $\lim_{n\to\infty}v_i|_{\Delta}$. On $V$, this map is the restriction to $\Delta$.
\end{construction}

\begin{construction}\label{Cs: equidistribution measure linear form}
    Let $\phi$ be a semipositive metric. For any homogeneous $s\in R$, and any sequence of Shilov finite metrics $\{\phi_i\}_{i\in\bbN}$ approximating $\phi$ in the topology of norms, by Proposition \ref{P: continuity of equidistribution measure} the sequence $\{L_{\phi_i}(\log\abs{s})\}_{i\in\bbN}$ converges to $L_{\phi}(\log\abs{s})$, where the functional is denoted by
    \[L_{\phi}(\log\abs{s}):=\lim_{n\to\infty}\frac{1}{\chi(n)}\log \norm{\bigwedge^{\chi(n)}(\cdot s)}_{\hom,\phi}.\]
    It is a linear functional in the function $\log\abs{s}$ on $\Cone(L)^{\an}$, hence can be readily extended to a linear form on $V$. Moreover, it is bounded from above by $\theta_{\Delta}(\ndot)$ if $\Delta$ is a Shilov boundary of $\KR$. Call it the \emph{equidistribution linear functional} (of the metric $\phi$, or of the graded algebra norm $\algnorm{\ndot}_{\phi}$).
    
    Thus if there is a Shilov boundary $\Shbd\KR$ whose points are all birational, then one can extend $L_{\phi}$ to the Banach subspace $\KV_{\Shbd\KR}$ of $\KC(\Shbd\KR)$. By Hahn-Banach extension theorem and Riesz representation theorem, the linear form defines a measure on $\Delta$, which is positive as $L_{\phi}$ is. Denote by $\mu_{\eq,\Shbd\KR}(\phi)$ this measure (however it is not necessarily unique). Call it an \emph{equidistribution measure} of $\phi$ (depending on the Shilov boundary $\Shbd\KR$, and also on the choice of H-B extension). 
\end{construction}
\begin{proposition}\label{P: equidistribution measure general semipositive}
    Let $\phi$ be a semipositive metric. Suppose there is a Shilov boundary $\Delta$ whose points are all birational, then there exists a probability measure $\mu_{\eq,\Delta}(\phi)$ supported on $\Delta$, such that 
    \[\lim_{n\to\infty}\frac{1}{\chi(n)}\log~ \norm{\bigwedge^{\chi(n)}(\cdot s)}_{\hom,\phi}=\int_{\KM(\KR(L,\phi))}\log\abs{s}\mu_{\eq,\Delta}(\phi).\]
\end{proposition}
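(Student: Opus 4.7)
The argument follows the sketch in Construction~\ref{Cs: equidistribution measure linear form}: define $L_\phi: V \to \bbR$, establish that it is dominated by $\theta_\Delta$, extend by continuity to $\KV_\Delta \hookrightarrow \KC(\Delta)$, apply Hahn--Banach to extend to all of $\KC(\Delta)$, and invoke Riesz representation. First, for a generator $\log\abs{s}$ with $s \in R_n \setminus \{0\}$, set
\[ L_\phi(\log\abs{s}) := \lim_{n\to\infty} \frac{1}{\chi(n)} \log \norm{\bigwedge^{\chi(n)}(\ndot s)}_{\hom,\phi}, \]
and extend to $V$ by $\bbR$-linearity. For existence of this limit and consistency under the extension, approximate $\phi$ by Shilov finite metrics $\phi_i \to \phi$ in the topology of graded algebra norms (possible since a semipositive $\phi$ is a limit of Fubini--Study metrics, which are Shilov finite). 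Theorem~\ref{T: equidistribution limit} gives $L_{\phi_i}(\log\abs{s}) = \int \log\abs{s}\, d\mu_{\eq}(\phi_i)$, and Proposition~\ref{P: continuity of equidistribution measure} yields $L_{\phi_i}(\log\abs{s}) \to L_\phi(\log\abs{s})$ independently of the sequence. Linearity of $L_\phi$ on $V$ is inherited from each $L_{\phi_i}$ being a measure integral on all of $V$.

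Second, I establish $\abs{L_\phi(f)} \leq \theta_\Delta(f)$ for $f \in V$, which is the heart of the argument. The one-sided bound on generators is direct: submultiplicativity of $\algnorm{\ndot}_\phi$ gives $\norm{\bigwedge^{\chi(n)}(\ndot s)}_{\hom,\phi} \leq \norm{s}_\phi^{\chi(n)}$, so $L_\phi(\log\abs{s}) \leq \log\norm{s}_\phi = \sup_\Delta \log\abs{s} \leq \theta_\Delta(\log\abs{s})$ using that $\Delta$ is a Shilov boundary. The main obstacle is upgrading this to a symmetric bound on all of $V$: for $f = \sum c_i \log\abs{s_i}$ with $c_i \in \bbR$, reduce by density first to $\bbQ$-coefficients and then to $\bbZ$-linear combinations by clearing denominators; write $mf = \log\abs{s_+} - \log\abs{s_-}$ with $s_\pm$ products of positive powers of the $s_i$; then combine the upper bound on $L_\phi(\log\abs{s_+})$ with a parallel lower bound $L_\phi(\log\abs{s_-}) \geq \inf_\Delta \log\abs{s_-}$. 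This lower bound should follow from the Shilov finite approximation combined with the birationality of points of $\Delta$ (so $\abs{s_-}_a > 0$ there), after checking that the infima over the approximating Shilov boundaries do not escape the control given by $\Delta$.

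Once the bound is in place, extend $L_\phi$ by continuity to $\KV_\Delta$, apply Hahn--Banach to extend to $\KC(\Delta)$ with the same bound, and invoke Riesz to obtain a signed Radon measure $\mu_{\eq,\Delta}(\phi)$ supported on $\Delta$. For positivity, each approximating $L_{\phi_i}$ is a positive probability measure integral, and on non-negative $f \in V$ this positivity passes to the limit; by density and the order structure, it extends to a positive functional on $\KC(\Delta)$, so the Riesz measure is positive. For total mass unity, evaluate $L_\phi$ on the constant $\log\abs{c}_k$ for $c \in k^\times$ viewed as a degree-zero section: $L_{\phi_i}(\log\abs{c}_k) = \log\abs{c}_k$ for all $i$ since each $\mu_{\eq}(\phi_i)$ is a probability measure, so $L_\phi(\log\abs{c}_k) = \log\abs{c}_k$, forcing $\mu_{\eq,\Delta}(\phi)(\Delta) = 1$. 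The representation formula in the conclusion is then a tautology from the construction, and the final identification with the integral over $\KM(\KR(L,\phi))$ follows because $\mu_{\eq,\Delta}(\phi)$ is supported on $\Delta \subset \KM(\KR(L,\phi))$.
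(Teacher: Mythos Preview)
Your overall architecture matches the paper's: Construction~\ref{Cs: equidistribution measure linear form} is the proof, and you have correctly unpacked its steps (define $L_\phi$, bound by $\theta_\Delta$, extend via Hahn--Banach, apply Riesz). You are also right that the nontrivial point is the two-sided bound $|L_\phi(f)|\le\theta_\Delta(f)$ on all of $V$, which the paper asserts without detail.

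However, your proposed route to the lower bound is the wrong one. You suggest extracting $L_\phi(\log|s_-|)\ge\inf_\Delta\log|s_-|$ from the Shilov finite approximations $\phi_i$, ``after checking that the infima over the approximating Shilov boundaries do not escape the control given by~$\Delta$''. That check is not available: the Shilov boundaries $\Shbd\KR(L,\phi_i)$ bear no a priori relation to $\Delta=\Shbd\KR(L,\phi)$, and a point of $\Shbd\KR(L,\phi_i)$ may well lie near a zero of $s_-$, so $\inf_{\Shbd_i}\log|s_-|$ can be arbitrarily negative. The approximation gives the \emph{existence} of $L_\phi$ and its linearity, but not the bound relative to $\Delta$.

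The bound should instead be proved directly from the fact that $\Delta$ is a Shilov boundary for $\phi$ itself, with birational points. For $s_+,s_-$ homogeneous of the same degree (which one arranges by taking powers), consider the operator $T=(\cdot s_+)\circ(\cdot s_-)^{-1}:s_-R_n\to s_+R_n$ with the norm $\|\cdot\|_{(n+p)\phi}$ on both sides. Since $\|\cdot\|_\phi=\sup_{a\in\Delta}|\cdot|_a$ and each $|\cdot|_a$ is multiplicative with $|s_-|_a>0$ (birationality, and $\inf_\Delta|s_-|>0$ by compactness of $\Delta$), one gets $\|T\|_{\hom}\le\sup_{a\in\Delta}|s_+|_a/|s_-|_a$. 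Taking top wedge and passing to the limit yields $L_\phi(\log|s_+|)-L_\phi(\log|s_-|)\le\sup_\Delta(\log|s_+|-\log|s_-|)$, i.e.\ $L_\phi(f)\le\sup_\Delta f$ for $f$ in the $\bbQ$-span, then for all $f\in V$ by continuity in the real coefficients. This gives $|L_\phi(f)|\le\theta_\Delta(f)$.

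A smaller point: your positivity argument (``passes to the limit, then extends by density'') does not control the Hahn--Banach step, since an arbitrary norm-preserving extension need not be positive. Once you have $L_\phi(f)\le\sup_\Delta f$ (not merely $\le\theta_\Delta(f)$), either invoke the M.~Riesz extension theorem to get a positive extension, or observe that constants $\log|c|$ lie in $V$ with $L_\phi(\log|c|)=\log|c|$, so any norm-one Hahn--Banach extension is unital with norm one on $\KC(\Delta)$ and therefore automatically positive, hence a probability measure.
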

\begin{remark}
    At this stage, it is not clear how this equidistribution measure actually depends on the choice of a Shilov boundary, nor how to assure in general the assumption on the birationality of points in it, apart from cases where $\Delta$ is contained in some skeleton. Note that even if one chooses to use the canonical Shilov boundary (Appendix A), it is not clear whether $\KV_{\Delta}$ is equal to $\KC(\Delta)$, hence not clear whether the corresponding equidistribution measure is unique. 
\end{remark}


\subsection{Monge-Ampère measure}

Now we pass to continuous semipositive metrics. Recall the definition and the continuity property of the Monge-Ampère operator.

\begin{construction}\label{Cs: MA measure}
    To any continuous semipositive metric $\phi$, one can associate a probability Radon measure $\MA(\phi)$ on $X^{\an}$ written as $(\ddc\phi)^{\wedge d}$; this map is continuous with respect to the uniform topology on the space of continuous semipositive metrics and the weak topology of measures: if $\phi_n\rightarrow\phi$ uniformly, then $\MA(\phi_n)\rightarrow\MA(\phi)$ weakly \cite{CL}\cite{CLD}\cite{GK}. 
\end{construction}
\begin{construction}\label{Cs: lifted MA measure}
    Let $\phi$ be a continuous semipositive metric, denote by $\MA^{\uparrow}(\phi)$ the measure $(\ddc \max\{\phi,0\})^{\wedge(d+1)}$ on $\Cone(L)^{\an}$. Denote by $\gamma_\phi: X^{\an}\to \partial\KM(\KR(\phi))$ the continuous map $x\mapsto \gamma_{\phi}(x)$, which is a homeomorphism since $\phi$ is continuous. 
\end{construction}
\begin{proposition}\label{P: lifted MA measure}
    Let $\phi$ be a continuous semipositive metric. The measure $\MA^{\uparrow}(\phi)$ is supported on $\gamma_{\phi}(X^{\an})$, and its direct image by $\gamma_\phi$ is the measure $\MA(\phi)$ on $X^{\an}$. If $\phi_n\to\phi$ converges uniformly, then $\MA^{\uparrow}(\phi_n)\to \MA^{\uparrow}(\phi)$ converges weakly as probability measures on $\Cone(L)^{\an}$.
\end{proposition}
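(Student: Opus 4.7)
The plan is to work locally on a trivializing open. Fix $U\subseteq X^{\an}$ admitting a frame $e$ of $L$ with dual frame $e^{\vee}$, giving an identification $\Cone(L)^{\an}|_U \simeq U\times\bbG_m^{\an}$ with fiber coordinate $\tau$ (so that $\tau$ parametrizes $\tau\cdot e^{\vee}(x)\in L_x^{\vee}$). Set $\phi_U := -\log|e|_{\phi} : U\to\bbR$, which is continuous since $\phi$ is. Viewing $\phi$ as the psh function $v\mapsto\log|v|_{\phi^{\vee}}$ on $\Cone(L)^{\an}$, it takes the local form $\phi(x,\tau) = \log|\tau| + \phi_U(x)$; the disc domain is $\{\phi\leq 0\}$, and its boundary $\gamma_{\phi}(X^{\an})$ is the Gauss sphere $\{|\tau|=e^{-\phi_U(x)}\}$. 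Then $\max\{\phi,0\}$ is a continuous semipositive function on $\Cone(L)^{\an}$ that vanishes on the disc domain.

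For the first assertion, on the complement of $\gamma_{\phi}(X^{\an})$ the function $\max\{\phi,0\}$ locally equals either the constant $0$ or $\log|\tau|+\phi_U(x)$. Since $\log|\tau|$ is pluri-harmonic on $\bbG_m^{\an}$, we have $\ddc\max\{\phi,0\}=0$ or $\ddc\phi_U$ respectively; in both cases this current is pulled back from the $d$-dimensional base $U$, so its $(d+1)$-fold wedge vanishes for dimensional reasons. Hence $\MA^{\uparrow}(\phi)$ is supported on $\gamma_{\phi}(X^{\an})$.

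For the pushforward identity, I would invoke a non-Archimedean slicing/projection formula for Monge--Ampère currents in the style of \cite{CLD}: among the $(d+1)$ factors in $(\ddc\max\{\phi,0\})^{\wedge(d+1)}$, one must be \emph{transverse} (capturing the radial jump across $\{\phi=0\}$), and integration in the $\tau$-direction along each $\bbG_m^{\an}$-fiber gives a Dirac mass $1$ at the Gauss sphere (the n-A analogue of the identity $\ddc\max\{g,0\}=\delta_{\{g=0\}}\,dg$ for the positive part of a smooth $g$); the remaining $d$ factors are pulled back from $U$ and contribute $(\ddc\phi_U)^{\wedge d}=\MA(\phi)|_U$. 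Since $\gamma_{\phi}$ is a homeomorphism onto its image, gluing these local identities yields $(\gamma_{\phi})_*\MA^{\uparrow}(\phi)=\MA(\phi)$ globally.

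Finally, if $\phi_n\to\phi$ uniformly on $X^{\an}$, then $\max\{\phi_n,0\}\to\max\{\phi,0\}$ uniformly on $\Cone(L)^{\an}$ because $\max\{\cdot,0\}$ is $1$-Lipschitz; continuity of the non-Archimedean Monge--Ampère operator recalled in Construction \ref{Cs: MA measure}, applied on the $(d+1)$-dimensional ambient space $\Cone(L)^{\an}$, then gives weak convergence $\MA^{\uparrow}(\phi_n)\to\MA^{\uparrow}(\phi)$. The main obstacle is the slicing step in the second paragraph: it is morally a textbook Bedford--Taylor identity for positive parts transported into the n-A setting, but one must verify rigorously that $\max\{\phi,0\}$ is an admissible input for the CLD Monge--Ampère operator on the non-projective cone and that the resulting current disintegrates cleanly along the radial direction. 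A safe route is to first check the identity for ample model metrics, where both sides can be computed explicitly on the integral model, and then extend to arbitrary continuous semipositive $\phi$ by density and by the continuity established in the last paragraph.
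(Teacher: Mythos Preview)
Your ``safe route'' in the final sentence---verify the identity for model/Fubini--Study metrics and then extend by density using the continuity of $\MA^{\uparrow}$---is exactly the paper's proof, stated in its entirety. The paper writes only: ``Approximate $\phi$ uniformly by Fubini-Study metrics $\phi_i$ for which the properties holds, and pass to the limit.''

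Your direct local computation is a genuinely different and more informative route. The support argument is sound once one grants that the Monge--Amp\`ere calculus of \cite{CLD} applies on the cone. The slicing step you flag is indeed the real work: it is a non-Archimedean analogue of the Bedford--Taylor identity for positive parts, and making it rigorous here would require invoking the tropical integration machinery of \cite{CLD} or \cite{GK} on the non-projective $\Cone(L)^{\an}$. The paper sidesteps this entirely by taking the model case as known and passing to the limit.

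One small point worth noting: your continuity argument for the third assertion---via the $1$-Lipschitz property of $\max\{\cdot,0\}$ and the general continuity of the Monge--Amp\`ere operator on the $(d+1)$-dimensional ambient space---is precisely what is needed to make the paper's ``pass to the limit'' step work for all three assertions at once; the paper does not spell this out.
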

\begin{proof}
    Approximate $\phi$ uniformly by Fubini-Study metrics $\phi_i$ for which the properties holds, and pass to the limit.
\end{proof}

\begin{theorem}\label{T: eq=MA for Shilov finite}
    Let $\phi$ be a continuous semipositive metric, which is Shilov finite. Then $\mu_{\eq}(\phi)=\MA^{\uparrow}(\phi)$.
\end{theorem}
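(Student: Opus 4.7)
The plan is to reduce the general Shilov finite continuous semipositive case to the ample model case via uniform approximation, then pass to the limit on both sides using the continuity statements already established. The base-case equality $\mu_{\eq}(\phi)=\MA^{\uparrow}(\phi)$ for ample model (diagonalizable Fubini-Study) metrics is recorded in the Remark following Definition~\ref{D: equidistribution measure Shilov finite}, via \cite[Th\'eor\`eme 6.9.3]{CLD}.

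First, I would approximate $\phi$ uniformly by a sequence of ample model metrics $\phi_k\to\phi$, which is possible since $\phi$ is continuous semipositive. For each $k$ the base-case equality $\mu_{\eq}(\phi_k)=\MA^{\uparrow}(\phi_k)$ holds, and by Proposition~\ref{P: lifted MA measure} we get weak convergence $\MA^{\uparrow}(\phi_k)\to\MA^{\uparrow}(\phi)$ on $\Cone(L)^{\an}$. On the equidistribution side, Construction~\ref{Cs: equidistribution measure linear form} together with Proposition~\ref{P: continuity of equidistribution measure} yields, for every nonzero homogeneous $s\in R_n(L)$,
\[ L_{\phi_k}(\log\abs{s})\xrightarrow{k\to\infty} L_{\phi}(\log\abs{s}),\]
and since $\phi$ is Shilov finite, Theorem~\ref{T: equidistribution limit} identifies $L_\phi(\log\abs{s})$ with $\int\log\abs{s}\,d\mu_{\eq}(\phi)$.

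Next, I would test both measures against the functions $\log\abs{s}$ for $s\in R_n(L)$. The obstacle is that $\log\abs{s}$ is only upper semicontinuous with values in $[-\infty,+\infty)$ on $\Cone(L)^{\an}$, so weak convergence of $\MA^{\uparrow}(\phi_k)$ does not immediately yield convergence of the integrals. I would handle this by truncation: set $f_M:=\max\{\log\abs{s},-M\}$, which is continuous and bounded, apply weak convergence to get $\int f_M\,d\MA^{\uparrow}(\phi_k)\to\int f_M\,d\MA^{\uparrow}(\phi)$, and then let $M\to\infty$ by monotone convergence against the limit measure. Combined with the semicontinuity bound $\limsup_k\int\log\abs{s}\,d\MA^{\uparrow}(\phi_k)\le\int\log\abs{s}\,d\MA^{\uparrow}(\phi)$ and the equidistribution-side convergence, one obtains
\[ \int_{\Cone(L)^{\an}}\log\abs{s}\,d\mu_{\eq}(\phi)=\int_{\Cone(L)^{\an}}\log\abs{s}\,d\MA^{\uparrow}(\phi) \]
for every nonzero homogeneous $s$.

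Finally, to conclude $\mu_{\eq}(\phi)=\MA^{\uparrow}(\phi)$ as measures, I would observe that both are supported on the Gauss section $\gamma_\phi(X^{\an})$: for $\MA^{\uparrow}(\phi)$ by Proposition~\ref{P: lifted MA measure}, and for $\mu_{\eq}(\phi)$ because each peak-isolated Shilov point (Proposition~\ref{P: finite Shilov boundary pic isolated}) attains the spectral norm of some section and must therefore coincide with $\gamma_\phi(x)$ for some $x\in X^{\an}$. Pushing forward by $p^{\an}$ reduces the identity to $X^{\an}$, where the test identity reads $\int\log\abs{s}_\phi\,d(p^{\an}_*\mu_{\eq}(\phi))=\int\log\abs{s}_\phi\,d\MA(\phi)$; since differences $\tfrac{1}{n}\log\abs{s_1}_\phi-\tfrac{1}{m}\log\abs{s_2}_\phi$ densely span $C(X^{\an})$ by ampleness of $L$ (a fact used in the CLD/GK construction of $\MA$), the two Radon probability measures must agree. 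The main technical difficulty is the truncation step: one must show a uniform-in-$k$ integrability bound for $\log\abs{s}$ against $\MA^{\uparrow}(\phi_k)$ so that no mass escapes to $-\infty$ as $k\to\infty$; this should follow from the uniform $d_\infty$-boundedness of $\phi_k\to\phi$, combined with the base-case identification of the MA integrals with the uniformly controlled functionals $L_{\phi_k}(\log\abs{s})$.
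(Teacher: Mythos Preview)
Your approximation and truncation steps are sound and in fact close to what the paper does: uniform Fubini--Study approximation, the base case from \cite{CLD}, weak continuity of $\MA^{\uparrow}$, and $d_1$-continuity of the equidistribution functional together do yield
\[
\int_{\Cone(L)^{\an}}\log\abs{s}\,d\mu_{\eq}(\phi)=\int_{\Cone(L)^{\an}}\log\abs{s}\,d\MA^{\uparrow}(\phi)
\]
for every nonzero homogeneous $s$, by exactly the usc/monotone-convergence sandwich you describe.

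The gap is the final step. The claim that differences $\tfrac{1}{n}\log\abs{s_1}_{\phi}-\tfrac{1}{m}\log\abs{s_2}_{\phi}$ densely span $C(X^{\an})$ is false as stated: such a function is $\pm\infty$ on the zero loci of $s_1,s_2$, so it does not even lie in $C(X^{\an})$, and neither do generic linear combinations. What is dense in $C(X^{\an})$ are \emph{model functions}, which are built from \emph{maxima} of such quantities, not linear combinations; and your integral identity is linear in $\log\abs{s}$, so it does not pass through $\max$. Consequently, knowing $\int\log\abs{s}_\phi\,d\nu_1=\int\log\abs{s}_\phi\,d\nu_2$ for all $s$ does not force two Radon probability measures on $X^{\an}$ to agree. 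Concretely, $\MA^{\uparrow}(\phi)$ is a priori supported on all of $\gamma_\phi(X^{\an})$, which is infinite, and on that set the functions $\log\abs{s}$ are singular and do not determine a measure.

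The paper closes this gap by exploiting the Shilov finite hypothesis much more directly than you do. It first proves, as a separate Lemma, that $\supp\MA^{\uparrow}(\phi)\subseteq\Shbd\KR(L,\phi)$: for any neighbourhood $U$ of $\Shbd$, one picks a homogeneous peak section $p$ with $\abs{p(\xi)}=1$ for every $\xi\in\Shbd$ and $P(\abs{p})\subseteq U$ (Proposition~\ref{P: finite Shilov boundary basic}); your integral identity then gives $\int\log_\delta\abs{p}\,d\MA^{\uparrow}(\phi)=0$ for all $\delta\in(0,1)$, forcing the support into $U$, hence into $\Shbd$. A second peak-section argument gives the reverse inclusion. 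Once both measures are finite combinations of Dirac masses at the same finite set $\Shbd$, a single peak section isolating each point (again Proposition~\ref{P: finite Shilov boundary basic}) matches the coefficients. You can salvage your argument by replacing the density appeal with this support-localization step; the integral identities you have already established are precisely the input it needs.
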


\begin{lemma}
    These two measures have same support.
\end{lemma}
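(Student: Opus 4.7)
The plan is to identify both supports explicitly and reduce the lemma to two separate inclusions, one handled by standard Monge--Amp\`ere orthogonality and the other by a peak-isolated ultrametric sum trick.

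By Definition~\ref{D: equidistribution measure Shilov finite} together with Proposition~\ref{P: localization section algebra HS}, $\supp\mu_{\eq}(\phi)=\Shbd\KR(L,\phi)=:\{\xi_1,\dots,\xi_m\}$, each weight $\lambda_i$ being strictly positive as the leading coefficient of an ample Hilbert--Samuel polynomial on an irreducible reduction component. Since $\gamma_\phi$ is a homeomorphism onto its image, Proposition~\ref{P: lifted MA measure} gives $\supp\MA^{\uparrow}(\phi)=\gamma_\phi(\supp\MA(\phi))$. The lemma therefore reduces to the set-theoretic equality $\gamma_\phi(\supp\MA(\phi))=\{\xi_1,\dots,\xi_m\}$.

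For the forward inclusion $\Shbd\subseteq\gamma_\phi(\supp\MA(\phi))$, I would invoke the standard maximum principle (orthogonality) for $\MA(\phi)$ on ample $L$ with continuous semipositive $\phi$ --- namely that every nonzero section $s\in R_n$ attains its sup norm $\algnorm{s}_{\phi}$ somewhere on $\supp\MA(\phi)$, a result available from the Monge--Amp\`ere theory in \cite{CLD,GK,BFJ2}. This statement means precisely that $\gamma_\phi(\supp\MA(\phi))$ is a closed boundary of $\KR(L,\phi)$, and by Proposition~\ref{P: finite Shilov boundary pic isolated} (uniqueness of the minimal closed boundary in the Shilov finite case) any such closed boundary must contain $\Shbd$.

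The main content is the reverse inclusion. I would argue by contradiction: suppose $y\in X^{\an}$ with $z:=\gamma_\phi(y)\notin\Shbd$. Choose disjoint open neighborhoods $V_i\ni\xi_i$ in $\KM(\KR(L,\phi))$, each missing $z$. Peak-isolation of each $\xi_i$ (Propositions~\ref{P: peak point if red saturated} and \ref{P: finite Shilov boundary pic isolated}) then supplies homogeneous $f_i$ with $P(|f_i|)\subseteq V_i$ and $\algnorm{f_i}_{\phi}=|f_i(\xi_i)|=1$; after replacing each by a suitable power I may assume all $f_i$ lie in the same $R_L^{\circ}$. I would then form the ultrametric sum
\[
s_N:=\sum_{i=1}^{m} f_i^{N}\in R_{LN}.
\]
At every Shilov point $\xi_j$, the summand $f_j^N$ has absolute value $1$ while every other $f_i^N(\xi_j)$ has absolute value strictly less than $1$ (because $\xi_j\notin V_i\supseteq P(|f_i|)$ for $i\neq j$), so the strict triangle inequality forces $|s_N(\xi_j)|=1$; at $z$ every $|f_i(z)|<1$, so $|s_N(z)|\leq\max_i|f_i(z)|^{N}<1$ for $N$ large. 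Hence $\algnorm{s_N}_\phi=1$ and $|s_N|_\phi(y)=|s_N(z)|<1$; by continuity of $\phi$, there is a neighborhood $U\ni y$ on which $|s_N|_\phi<1$. If $y\in\supp\MA(\phi)$ then $\MA(U)>0$, giving $\int_{X^{\an}}\log|s_N|_\phi\,d\MA(\phi)<0$; but Theorem~\ref{T: equidistribution limit} applied to the Shilov finite metric $\phi$, combined with Corollary~\ref{T: equidistribution semipositive} applied to the continuous semipositive metric $\phi$, and Proposition~\ref{P: lifted MA measure}, yields
\[
\int_{X^{\an}}\log|s_N|_\phi\,d\MA(\phi)=\int_{\Cone(L)^{\an}}\log|s_N|\,d\mu_{\eq}(\phi)=\sum_{j=1}^{m}\lambda_j\log|s_N(\xi_j)|=0,
\]
a contradiction, so $y\notin\supp\MA(\phi)$.

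The reverse inclusion is the main obstacle: one needs a single homogeneous section saturating the sup norm at every Shilov point simultaneously while falling short at the rogue point $z$. The ultrametric sum $s_N=\sum_i f_i^{N}$ of peak-isolated sections is the essential non-archimedean device --- the strict triangle inequality lets each summand survive at its own Shilov point while being dominated at the others --- and its insertion into the equidistribution identity delivers the contradiction.
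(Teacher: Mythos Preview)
Your argument contains a genuine circularity. In the reverse inclusion you derive the contradiction by invoking Corollary~\ref{T: equidistribution semipositive} to identify $L_\phi(\log|s_N|)$ with $\int_{X^{\an}}\log|s_N|_\phi\,\MA(\phi)$. But within the logic of the paper, Corollary~\ref{T: equidistribution semipositive} is deduced \emph{from} Theorem~\ref{T: eq=MA for Shilov finite} (via uniform approximation by Shilov finite metrics), and that theorem is precisely what the present lemma is a step toward. So you are assuming the conclusion. The ultrametric sum $s_N=\sum_i f_i^N$ is a nice device and does what you claim at the Shilov points and at $z$, but without an independent route to the identity $\int\log|s_N|\,\mu_{\eq}(\phi)=\int\log|s_N|_\phi\,\MA(\phi)$ the contradiction does not close.

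The paper avoids this trap by never comparing $\mu_{\eq}(\phi)$ and $\MA^{\uparrow}(\phi)$ directly for $\phi$. Instead it uses that the equality $\mu_{\eq}=\MA^{\uparrow}$ is already known for Fubini--Study approximants $\phi_n$, and passes to the limit separately on each side: on the equidistribution side via Proposition~\ref{P: continuity of equidistribution measure}, and on the $\MA^{\uparrow}$ side via weak continuity (Proposition~\ref{P: lifted MA measure}) applied to the truncated test function $\log_\delta|p|$ for a peak section $p$ chosen so that $P(|p|)$ lies in a small neighborhood of $\Shbd$. This squeezes $\supp\MA^{\uparrow}(\phi)$ into $\Shbd$; the opposite inclusion then follows by testing with a section separating the Shilov points (Proposition~\ref{P: finite Shilov boundary basic}(2)). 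Your forward inclusion via an external orthogonality/maximum principle is plausible but is also something the paper deliberately does not import; if you want to keep your overall structure, you would need to replace the appeal to Corollary~\ref{T: equidistribution semipositive} by this approximation-and-truncation argument.
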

\begin{proof}
    The equality for supports holds for Fubini-Study metrics. Let $\{\phi_n\}$ be a sequence of Fubini-Study metrics approximating $\phi$ uniformly. Consider any $s\in R_n\cap\KR^{\circ}$, on the one hand, thanks to Proposition \ref{P: continuity of equidistribution measure}, one gets (write $\int$ for $\int_{\Cone(L)^{\an}}$ for all below)
    \[\int\log\abs{s}\mu_{\eq}(\phi_n)\xrightarrow{n\to\infty} \int\log\abs{s}\mu_{\eq}(\phi);\]
    on the other hand, for any $\delta>0$, denote by $\log_{\delta}\abs{s}$ the continuous function $\max\{\log\abs{s},\log\delta\}$, by the continuity of $\MA^{\uparrow}(\ndot)$, it holds that
    \[\int\log_{\delta}\abs{s}\MA^{\uparrow}(\phi_n)\xrightarrow{n\to\infty}\int\log_{\delta}\abs{s}\MA^{\uparrow}(\phi).\]
    Since $\phi$ is Shilov finite, for any neighbourhood $U$ of $\Shbd\KM(\KR(\phi))$, there exists a homogeneous peak section $p\in\KR^{\circ}$ such that $\abs{p(\xi)}=1$ for all $\xi\in\Shbd$ and the locus $P(\abs{p})$ is contained in $U$. Thus taking $\limsup_{n\to\infty}$ for the following terms
    \[\int\log\abs{p}\mu_{\eq}(\phi_n)=\int\log\abs{p}\MA^{\uparrow}(\phi_n)\leq \int\log_{\delta}\abs{p}\MA^{\uparrow}(\phi_n)\]
    one gets 
    \[0=\int\log\abs{p}\mu_{\eq}(\phi)\leq \limsup \int\log_{\delta}\abs{p}\MA^{\uparrow}(\phi)\leq 0,\]
    thus (by also considering $\liminf$) one gets $\int\log_{\delta}\abs{p}\MA^{\uparrow}(\phi)=0$ for any $\delta\in(0,1)$, hence the support of $\MA^{\uparrow}(\phi)$ is contained in $\abs{p(\ndot)}=1$ which is in $U$. As $U$ can be made arbitrarily small, one knows that $\supp(\MA^{\uparrow}(\phi))\subseteq\Shbd\KM(\KR(L))$. 
    
    This inclusion is in fact an equality: otherwise, by Proposition \ref{P: finite Shilov boundary basic} one can find a homogeneous section $p$ such that $\abs{p(\xi)}=1$ for all $\xi\in\supp(\MA^{\uparrow}(\phi))$ and $\abs{p(\xi')}<1$ for other $\xi'\in \Shbd$; test the two measures against $\log\abs{p}$ gives 
    \[0=\int \log\abs{p}\MA^{\uparrow}(\phi)=\int \log\abs{p}\mu_{\eq}(\phi) <0\]
    a contradictory inequality.
\end{proof}
\begin{proof}
    Since both measures can be written as convex combinations of Dirac measures supported on a common finite set $\Shbd\KR$, it suffices to equalize the coefficients. For any $\xi\in \Shbd\KR$, by Proposition \ref{P: finite Shilov boundary basic} one can find $p\in R(L)$ such that $\abs{p(\xi)}<1$ and $\abs{p(\xi')}=1$ for all other $\xi'\in \Shbd\KR$. Test these two measures against the function $\log\abs{p}$ gives equality of coefficients for $\Delta(\xi)$.
\end{proof}

\begin{corollary}\label{T: equidistribution semipositive}
    Let $\phi$ be a continuous semipositive metric. Then
    \[\lim_{n\to\infty}\frac{1}{\chi(n)}\log~ \norm{\bigwedge^{\chi(n)}(\cdot s)}_{\hom,\phi} =\int_{X^{\an}}\log\abs{s}_{\phi}\MA(\phi).\]
\end{corollary}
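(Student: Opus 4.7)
The plan is to reduce to the Shilov finite case by uniform approximation. I would choose a sequence $\{\phi_m\}_{m\in\bbN}$ of Fubini-Study metrics converging uniformly to $\phi$; each is continuous, semipositive, and Shilov finite. Applying Theorem~\ref{T: equidistribution limit} together with Theorem~\ref{T: eq=MA for Shilov finite} to each $\phi_m$ then yields
\[
L_{\phi_m}(\log\abs{s}) := \lim_{n\to\infty}\frac{1}{\chi(n)}\log\norm{\bigwedge^{\chi(n)}(\cdot s)}_{\hom,\phi_m} = \int_{X^{\an}}\log\abs{s}_{\phi_m}\MA(\phi_m),
\]
and the remaining task is to pass to the limit $m\to\infty$ and identify both sides with the corresponding quantities for $\phi$.

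The left-hand side convergence $L_{\phi_m}(\log\abs{s})\to L_\phi(\log\abs{s})$ is immediate from Proposition~\ref{P: continuity of equidistribution measure}, since uniform convergence of continuous metrics entails $d_\infty$-convergence (hence $d_1$-convergence) of the associated graded algebra norms. For the right-hand side, the pointwise identity $\log\abs{s}_{\phi_m} = \log\abs{s}_\phi + (\phi - \phi_m)$ together with $\sup_{X^{\an}}\abs{\phi - \phi_m}\to 0$ reduces the problem to showing $\int\log\abs{s}_\phi\MA(\phi_m)\to\int\log\abs{s}_\phi\MA(\phi)$; equivalently, by Proposition~\ref{P: lifted MA measure} (which also provides the weak convergence $\MA^{\uparrow}(\phi_m)\to\MA^{\uparrow}(\phi)$ on $\Cone(L)^{\an}$), one needs $\int\log\abs{s}\,\MA^{\uparrow}(\phi_m)\to\int\log\abs{s}\,\MA^{\uparrow}(\phi)$.

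The main obstacle will be this last integral convergence. The function $\log\abs{s}$ is only upper semicontinuous (with value $-\infty$ on the zero locus of $s$), so weak convergence alone yields only the $\limsup$ direction, obtained by truncating from above with $\log_\delta\abs{s}:=\max\{\log\abs{s},\log\delta\}$ (continuous, decreasing to $\log\abs{s}$ as $\delta\to 0$) and applying dominated convergence on the target. The reverse $\liminf$ inequality will require a form of uniform integrability of $\log\abs{s}$ against the family $\{\MA^{\uparrow}(\phi_m)\}_m$, namely a non-Archimedean Skoda-type estimate $\MA^{\uparrow}(\phi_m)(\{\abs{s}<\epsilon\})\cdot\abs{\log\epsilon}\to 0$ uniformly in $m$ as $\epsilon\to 0$. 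This should follow from the fact that the Monge-Ampère measure of a continuous semipositive metric does not charge proper Zariski-closed subvarieties (in particular $V(s)$), combined with the compactness of the supports and the continuity of the map $\phi\mapsto\gamma_\phi$; carrying out this uniform estimate is the principal analytic hurdle of the proof.
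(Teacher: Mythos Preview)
Your outline matches the implicit argument the paper intends: the statement is recorded as a \emph{corollary} with no separate proof, immediately following Theorem~\ref{T: eq=MA for Shilov finite}, so the deduction is precisely the approximation-and-pass-to-the-limit you describe (the left-hand side is continuous in $\phi$ by Construction~\ref{Cs: equidistribution measure linear form} via Proposition~\ref{P: continuity of equidistribution measure}, while for each Fubini--Study $\phi_m$ the two sides agree by Theorems~\ref{T: equidistribution limit} and~\ref{T: eq=MA for Shilov finite}).

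The one place where you hesitate---the convergence $\int\log\abs{s}_{\phi_m}\MA(\phi_m)\to\int\log\abs{s}_\phi\MA(\phi)$---is not a genuine gap here, and you are making it harder than necessary. You do not need to establish a Skoda-type uniform integrability estimate from scratch: the quantity $\int_{X^{\an}}\log\abs{s}_\phi\,\MA(\phi)$ is, up to sign, the non-Archimedean local height of $\Div(s)$ with respect to $(L,\phi)$, and its continuity under uniform limits of continuous semipositive metrics is part of the Chambert-Loir package that the paper already invokes in Construction~\ref{Cs: MA measure} (references \cite{CL}, \cite{CLD}, \cite{GK}). Concretely, these local heights are defined first for model metrics as intersection numbers on the integral model, and then extended to continuous semipositive metrics by uniform approximation; multilinearity of the pairing yields a Lipschitz bound in each metric slot, which is exactly the continuity statement you need. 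So the ``principal analytic hurdle'' you flag is already absorbed into the cited theory; you may quote it rather than reproving the estimate via truncation and mass bounds.
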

\begin{remark}
    One would also like to compare $\mu_{\eq}(\phi;\Delta)$ with $\MA^{\uparrow}(\phi)$ when $\phi$ is continuous and $\Delta$ consists of birational points. We shall address this problem in a future study using tropical geometry. At the current stage, it is only known that they define the same linear form on the function space $V$.
\end{remark}

\section{Examples and applications}

\subsection{Locally affinoid Banach algebra and semipositive piecewise linear metrics}
There are two natural classes of semipositive metrics whose Shilov boundary consists of finitely many birational points. Firstly (formal) model (semipositive) metrics have this property, and there's a characterization for such metrics in terms of the Banach algebra $\KR(L,\phi)$ being locally affinoid, based on a result in \cite{GK}. Secondly, the envelope construction yields (possibly singular) metrics with almost prescribed Shilov boundaries (to be discussed in the next subsection).

Recall that the spectrum of a Banach algebra is called locally affinoid, if it is a $G$-domain, namely a finite union of affinoid domains \cite{Tem}.
\begin{proposition}\label{P: Stein-Liu metric}
    If $\phi$ is a metric whose disc domain $\KM(\KR(L,\phi))$ is locally affinoid, then $\phi$ is Shilov finite.
\end{proposition}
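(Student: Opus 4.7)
The plan is to combine the classical theorem that every strict affinoid algebra has a finite Shilov boundary made of birational points (see \cite[\S 7.2]{BGR}) with the hypothesis that $\KM(\KR(L,\phi))$ is covered by finitely many such affinoids.

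First, I would write $\KM(\KR(L,\phi)) = \bigcup_{i=1}^N U_i$ with $U_i = \KM(\KA_i)$ affinoid, and consider for each $i$ the classical Shilov boundary $\Delta_i \subset U_i$, obtained as $\red_i^{-1}(\eta(\widetilde{\KA_i}))$. This $\Delta_i$ is finite and its points are birational in $\KA_i$. The union $\Gamma := \bigcup_i \Delta_i$, viewed inside $\KM(\KR(L,\phi))$, is then a finite closed boundary of $\KR(L,\phi)$: for any $f \in \KR(L,\phi)$, the spectral norm $\norm{f}_{\Sp}$ is attained on the compact $\KM(\KR(L,\phi))$, hence on some $U_i$, and on $U_i$ the supremum of $\abs{f}$ is attained on $\Delta_i$ by affinoid theory. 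Since $\Gamma$ is finite, any Shilov boundary of $\KR(L,\phi)$ is finite, and by Proposition \ref{P: finite Shilov boundary pic isolated} it is unique and contained in $\Gamma$.

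Second, I would establish birationality of each $\xi \in \Shbd \KR(L,\phi)$ in the ambient algebra $\KR(L,\phi)$. Since $\xi \in \Delta_i$ for some $i$, the seminorm $\abs{\ndot}_\xi$ is already a norm on $\KA_i$, so the kernel of $\abs{\ndot}_\xi \colon \KR(L,\phi) \to \SH(\xi)$ equals $\ker(\KR(L,\phi) \to \KA_i)$. This kernel is a closed homogeneous ideal of $\KR(L,\phi)$ consisting of elements vanishing on $U_i$. To show it is zero, I would use the isometric embedding $\KR(L,\phi) \hookrightarrow \prod_i \KA_i$ (a consequence of the $\{U_i\}$ covering $\KM(\KR(L,\phi))$) together with Proposition \ref{P: finite Shilov boundary basic}(2) to produce, for each $\xi$, a homogeneous element $p \in \KR(L,\phi)$ with $\abs{p(\xi)} = 1$ and $\abs{p(\xi')} < 1$ for every other $\xi' \in \Shbd \KR(L,\phi)$; after passing to the localization $\KR(L,\phi)\{p^{-1}\cdot(\text{shift})\}$ in the spirit of Lemma \ref{L: norm reduction localization Banach alg finite Shilov}, the affinoid containing $\xi$ may be shrunk to have $\xi$ as its unique Shilov point, and the graded-domain structure of $R(L)$ then forces injectivity.

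The main obstacle I expect is the birationality step, since an affinoid subdomain of an arbitrary Berkovich spectrum need not induce an injective localization of the ambient Banach algebra. Here, however, the irreducibility of $X$ (equivalently, the graded-domain structure of $R(L)$) combined with the Abhyankar/maximal-rank nature of the Shilov points of strict affinoids should force the relevant localizations to be injective: concretely, one identifies the reduction of the analytic localization at $\xi$ with a Zariski localization of $\widetilde{\KR(L,\phi)}/\mathrm{nil}$ at the corresponding minimal prime via Lemma \ref{L: norm reduction localization Banach alg finite Shilov}(3), and checks that this localization sees the generic point of $\Cone(L)$. Modulo this identification, finiteness of the Shilov boundary and birationality of its points follow simultaneously.
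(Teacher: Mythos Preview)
Your first paragraph is precisely the paper's proof: the union $\Gamma=\bigcup_i\Delta_i$ of the Shilov boundaries of the affinoid pieces is a finite closed boundary, so $\Shbd\KR(L,\phi)$ is finite. The paper's argument is a single sentence and stops there; it does not spell out birationality.

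Your attempt to justify birationality goes beyond what the paper writes, but the sketch has a genuine gap. The claim that ``$\abs{\ndot}_\xi$ is already a norm on $\KA_i$'' presupposes that $\KA_i$ is a domain, which you have not established---an affinoid subdomain of an irreducible analytic space can be reducible, and then each Shilov point has kernel equal to a nonzero minimal prime of $\KA_i$. The subsequent manoeuvre with peak sections and analytic localization never cleanly isolates why $\ker(\abs{\ndot}_\xi|_{R(L)})$ should vanish; invoking the ``graded-domain structure of $R(L)$'' is a gesture, not an argument.

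The direct route is an Abhyankar-type count. Each $V_i$ is an affinoid domain in the irreducible $(d{+}1)$-dimensional space $\Cone(L)^{\an}$, so by flatness of affinoid subdomains $\KA_i$ is pure of dimension $d{+}1$, and every $\xi\in\Delta_i$ satisfies $\mathrm{tr.deg}_{\widetilde{k}}\widetilde{\SH(\xi)}=d{+}1$ (recall $\abs{k^\times}=\bbR_+$, so the value-group term vanishes). Were $\mathfrak{p}:=\ker(\abs{\ndot}_\xi|_{R(L)})$ nonzero, $\xi$ would lie in $V(\mathfrak{p})^{\an}$ with $\dim V(\mathfrak{p})\leq d$, contradicting the Abhyankar inequality applied inside $V(\mathfrak{p})^{\an}$. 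This is presumably the standard fact the paper leaves implicit; it replaces your localization argument entirely.
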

\begin{proof}
    By definition, this domain admits a finite cover by affinoid domains $\{V_i\}_{i\in I}$, so the Shilov points for $\KM(\KR(L,\phi))$ is contained $\bigcup_{i\in I}\Shbd V_i$ which is a finite set. 
\end{proof}

Recall that a formal model $(\fX,\fL)$ over $k^{\circ}$ for $(X,L)$ defines a model metric $\phi_{\fL}$ \cite{Gub}: there is a covering by formal open subsets $\{\fU_i\}$ of $\fX$ such that sections of $\fL|_{\fU_i}$ are of constant norm one (or equivalently a $G$-covering by affinoid domains $U_i$ of $X^{\an}$ with local sections $s_i$ of $L^{\an}|_{U_i}$ of constant norm one). An equivalent characterization given in \cite[Proposition 8.11]{GK} for such metrics is that $\phi$ is piecewise linear, in the sense that $U_i$ can be $G$-covered by affinoid domains $\{W_{ij}\}$ on which there exist tropical moment maps $F_{ij}$ to algebraic torus $T_{ij}^{\an}$ that pulls a linear function $\phi_{ij}$ to the function $\log\abs{s_i}_{\phi}$ restricted on $W_{ij}$. (Note that we assumed $\abs{k^*}=\mathbb{R}_+$ so we do not take care of rationality of slopes in the definition of piecewise linear objects.)


\begin{proposition}\label{P: piecewise-linear is equivalent to Stein-Liu}
    The followings are equivalent for a metric $\phi$ on the ample line bundle $L$: (1) it is defined by a vertically nef formal model of $L$; (2) it is piecewise linear and semipositive; (3) $\phi$ is continuous, the holomorphically convex set $\KM(\KR(L,\phi))$ is locally affinoid, and is equal to $\KM^-(\KR(L,\phi))$.
\end{proposition}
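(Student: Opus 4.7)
The plan is to invoke \cite[Proposition 8.11]{GK}, which gives the equivalence $(1) \Leftrightarrow (2)$, and then concentrate on bridging either condition to $(3)$ via the Banach algebra $\KR(L,\phi)$ and its spectrum.

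For the direction $(1) \Rightarrow (3)$, I would argue as follows. A model metric $\phi_{\fL}$ is continuous by construction. The locally affinoid structure on the disc domain comes from a formal affine cover $\{\fU_i = \mathrm{Spf}\, A_i^{\circ}\}$ of $\fX$ on which $\fL|_{\fU_i}$ is trivialized by a section $s_i$ of constant norm one: each piece contributes an affinoid subdomain of the form $\KM(A_i\{T\})$ to the disc domain, where $T$ is the fibre coordinate dual to $s_i$, and these glue into a finite affinoid cover of $\KM(\KR(L,\phi))$. The identity $\KM = \KM^{-}$ should be automatic for model metrics arising from a vertically nef formal model, reflecting the fact that the disc bundle of $\fL^{\vee}$ has no phantom boundary points beyond those encoded in the integral structure.

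For the harder direction $(3) \Rightarrow (1)$, a Raynaud-type reconstruction is natural. Given a finite affinoid cover $\{V_i = \KM(\KA_i)\}$ of the disc domain, the subrings $\KA_i^{\circ}$ of power-bounded elements are admissible $k^{\circ}$-algebras and glue (along intersections, which remain affinoid) into a flat formal $k^{\circ}$-scheme $\hat{\mathfrak{Y}}$. The grading on $\KR(L,\phi)$ furnishes a $\bbG_m$-action on $\hat{\mathfrak{Y}}$; the condition $\KM = \KM^{-}$ ensures the resulting formal quotient $\fX$ carries an ample tautological line bundle $\fL$ whose associated model metric is $\phi$. Semipositivity of $\phi$, which one reads off from the fact that $\phi = \lim_n \frac{1}{n}\FS(\norm{\ndot}_{n\phi})$ is a limit of Fubini--Study metrics, then corresponds to vertical nefness of $\fL$ on the special fiber.

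The main obstacle will be the $(3) \Rightarrow (1)$ direction. While Raynaud's theorem produces the formal scheme $\hat{\mathfrak{Y}}$, one must descend from the cone-side formal model to a formal model of $(X,L)$ itself, and it is precisely here that the condition $\KM = \KM^{-}$ is essential. A cleaner alternative may be to prove $(3) \Rightarrow (2)$ directly: using tropical moment maps adapted to each affinoid piece $V_i$, one extracts a piecewise linear structure on $\phi$ locally on $X^{\an}$ (the pieces being the images under $p^{\an}$ of the $V_i$), after which \cite[Proposition 8.11]{GK} produces the vertically nef formal model. This shortcut avoids the subtle $\bbG_m$-descent step entirely and is likely how I would actually write up the proof.
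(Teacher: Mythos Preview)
Your proposal lands on the same architecture as the paper: $(1)\Leftrightarrow(2)$ by citing \cite{GK} (the paper also invokes \cite[Theorem 6.10]{GK2}), $(1)\Rightarrow(3)$ via a trivializing affinoid cover of the disc bundle, and closing the loop by $(3)\Rightarrow(2)$ rather than the Raynaud reconstruction you first sketch. Two minor points where the paper differs: for $(1)\Rightarrow(3)$ it also treats the case where only some power $n\phi$ comes from a formal model, using that the finite map $\KM(\phi)\to\KM(n\phi)$ preserves local affinoidness; and for $(3)\Rightarrow(2)$ it bypasses tropical moment maps entirely, writing $\abs{s}_{\phi}(x)=\max_{V_i\ni x}\max_{z\in V_i\cap(L^{\vee})^{\an}_x}\abs{s(z)}$ directly from the affinoid cover $\{V_i\}$ of the disc domain, which exhibits $\log\abs{s}_{\phi}$ as a max of finitely many affinoid-linear pieces without further work.
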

\begin{proof}
    That $(1)\Leftrightarrow (2)$ is part of \cite[Proposition 8.11]{GK} and \cite[Theorem 6.10]{GK2}.
    
    For $(1)\Rightarrow (3)$, first suppose that $\phi$ is given by a vertically nef formal line bundle of $L$, then there is a cover by affinoid domains $\{U_i\}$ of $X^{\an}$ and constant norm one local sections $\{s_i\}$ of $L$. The dual unit disc bundle of $(L^{\otimes n})^{\an}$ over $U_i$ of $n\phi$ is isomorphic to $U_i\times \overline{\bbD}^1$ via the isomorphism given by $s_i$, hence is an affinoid domain. Its image under the affine map $p^{\an}$ is thus a locally affinoid domain in $\Cone(L)^{\an}$. Thus $\KM(\KR(L,\phi))$ is covered by a finite number of affinoid domains, hence is locally affinoid. More generally, if $n\phi$ is given by a vertically nef formal line bundle, then $\KM(n\phi)$ in $\Cone(L^{\otimes n})^{\an}$ is locally affinoid; as the tautological map induced by $R^{[n]}\to R$ is finite and induces the map $\KM(\phi)\to \KM(n\phi)$, the domain $\KM(\phi)$ is also locally affinoid as this property is preserved by finite morphisms.

    For $(3)\Rightarrow(2)$, take a $G$-cover of $\KM(\KR(L,\phi))$ by finitely many irreducible affinoid domains $\{V_i\}$. As $\abs{s}_{\phi}(x)$ can be written as $\max_{V_i\ni x}\max_{z\in V_i\cap (L^{\vee})^{\an}(x)}\{\abs{s(z)}\}$, the metric is thus piecewise linear.
\end{proof}

\begin{remark}
    Locally affinoid but non-affinoid uniform Banach algebras can be obtained via the affinoid pinching construction \cite{Tem}. Roughly speaking, the disc domain $\KM(\KR(L,\phi))$ is affinoid if and only if $\phi$ is induced by an ample model of $L$; 
\end{remark}

\subsection{Shilov finite metric and envelope}
Another way to obtain Shilov finite metric is via the envelope construction. Note that this produces a possibly \emph{singular} metric, in the sense that it could be discontinuous or degenerate at some point.

\begin{construction}\label{Cs: envelope finite points}
    Let $\{z_i\}_{i\in I}$ be a finite set of birational points in $\Cone(L)^{\an}$ which are graded (i.e. $z_i^+=z_i$, see Construction \ref{Cs: gradization}), with $\abs{I}=m$. Denote by $I_{\Sh}\subset I$ the subset of Shilov points of the domain $\KP(\{z_i\}_{i\in I})$. Let $\Delta_I$ be the standard simplex $\{ \underline{x}\in\bbR_+^{\abs{I}},\sum x_i=1\}$, and $\Delta_J$ be its face for any $J\subset I$ with the inclusion map $\iota_{J,I}$. Denote by $\underline{0}$ the origin $(0,\dots,0)$ of $\bbR_+^{\abs{I}}$.
\end{construction}
\begin{construction}
    Fix a homogeneous element $s\in R(L)_1$, this gives a coordinization $(\abs{s(z_1)},\dots,\abs{s(z_m)})\in\bbR_+^m$ denoted by $\underline{t}$ (we omit notating $s$). Let $\algnorm{\ndot}(\underline{t})$ be the algebra norm $\max_{i\in I}\abs{\ndot}_{z_i(t_i)}$ on $R(L)$ which is also graded. Denote by $\phi(\underline{t})$ the  metric $\KP(\algnorm{\ndot}(\underline{t}))$. 
    
    Denote by $\underline{\lambda}(\underline{t})$ the $m$-tuple $(\lambda_1,\dots,\lambda_m)\in\Delta_I$ the coefficients in the equidistribution measure $\mu(\phi(\underline{t}))$. Note that for any $r\in\bbR_+$, one has $\algnorm{\ndot}(r\underline{t})=r.\algnorm{\ndot}(\underline{t})$ and $\phi(r\underline{t})=r\phi(\underline{t})$ hence $\mu(r\underline{t})=\mu(\underline{t})$. We define a map from norms to measures
    \[\pmb{\mu}: \Delta_I\simeq \big(\bbR_+^{\abs{I}}\setminus\{\underline{0}\}\big)/\bbR_+^{\times}\rightarrow \Delta_I, \quad\underline{t}\mapsto \underline{\lambda}(\underline{t}). \]
    For each fixed $\underline{t}$, it factorizes through the face inclusion map $\iota_{I_{\Sh}(\underline{t}),I}: \Delta_{I_{\Sh}(\underline{t})}\to \Delta_{I}$
    as the equidistribution measure is supported Shilov points.
\end{construction}

\begin{proposition}\label{P: continuity coefficient of measure}
    The map $\pmb{\mu}$ is continuous. 
\end{proposition}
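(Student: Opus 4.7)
The plan is to reduce continuity to the equidistribution functional via Proposition \ref{P: continuity of equidistribution measure}, and then recover each coefficient $\lambda_i(\underline{t})$ from a matrix of evaluations on a sufficiently separating family of test sections.

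First, I would show that the map $\underline{t} \mapsto \algnorm{\ndot}(\underline{t})$ is continuous in the pseudo-distance $d_{\infty}$ on graded algebra norms. Since each $z_i$ is graded, the rescaled absolute value satisfies $\abs{f}_{z_i(t_i)} = (t_i/\abs{s(z_i)})^n\abs{f}_{z_i}$ on homogeneous $f \in R_n$; an elementary max-comparison then gives
\[d_{\infty}(\algnorm{\ndot}(\underline{t}), \algnorm{\ndot}(\underline{t}')) \leq \max_{i \in I} \abs{\log(t_i/t_i')},\]
which tends to $0$ as $\underline{t}' \to \underline{t}$ in $\Delta_I$.

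Combining this with the second assertion of Proposition \ref{P: continuity of equidistribution measure}, for every fixed homogeneous $s \in R \setminus \{0\}$ the equidistribution linear functional $\underline{t} \mapsto L_{\phi(\underline{t})}(\log\abs{s})$ is continuous in $\underline{t}$. Theorem \ref{T: equidistribution limit} identifies this functional as
\[L_{\phi(\underline{t})}(\log\abs{s}) = \sum_{i \in I} \lambda_i(\underline{t})\log\abs{s(z_i)},\]
i.e.\ as a linear form in the coefficient vector $\underline{\lambda}(\underline{t})$.

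Finally, since the $z_i$ are pairwise distinct graded birational points, each value $\log\abs{f(z_i)}$ is finite for nonzero homogeneous $f$, and the associated $\bbR$-valued additive functionals on the multiplicative group of nonzero homogeneous elements modulo $k^{\times}$ are pairwise inequivalent. A straightforward linear-algebra argument then produces homogeneous sections $s_1, \dots, s_{\abs{I}}$ (of possibly varying degrees) such that the $\abs{I} \times \abs{I}$ real matrix $M_{ji} := \log\abs{s_j(z_i)}$ has nonzero determinant; the identity $\underline{\lambda}(\underline{t}) = M^{-T}\bigl(L_{\phi(\underline{t})}(\log\abs{s_j})\bigr)_{j}$ then exhibits each $\lambda_i$ as a continuous function of $\underline{t}$.

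The main obstacle I anticipate is the explicit construction of the separating family $\{s_j\}$ so that $M$ is invertible. The birationality of the $z_i$ guarantees finiteness of each entry, and distinctness of the $z_i$ as graded points yields linear independence of the associated functionals in the limit over large degrees, but assembling a concrete invertible matrix from finitely many sections across several degrees requires some combinatorial care.
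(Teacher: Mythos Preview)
There is a genuine error in your identification of the equidistribution functional. The Shilov boundary of $\KR(L,\phi(\underline{t}))$ consists of (a subset of) the \emph{rescaled} points $z_i(t_i)$, not the original $z_i$; hence Theorem~\ref{T: equidistribution limit} gives
\[
L_{\phi(\underline{t})}(\log\abs{f}) \;=\; \sum_{i\in I}\lambda_i(\underline{t})\,\log\abs{f(z_i(t_i))},
\]
and the coefficient matrix must be $M(\underline{t})_{ji}=\log\abs{s_j(z_i(t_i))}$, which depends on $\underline{t}$. Your ``fixed matrix'' $M_{ji}=\log\abs{s_j(z_i)}$ is only the correct object at the single base point $\underline{t}_0=(\abs{s(z_1)},\dots,\abs{s(z_m)})$. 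Since for homogeneous $s_j$ of degree $n_j$ one has $\log\abs{s_j(z_i(t_i))}=n_j\log(t_i/\abs{s(z_i)})+\log\abs{s_j(z_i)}$, the matrix $M(\underline{t})$ differs from your $M$ by a rank-one perturbation and can become singular at some $\underline{t}$ even if $M$ is invertible; so a single global choice of test sections does not suffice.

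The paper's proof addresses exactly this by working \emph{locally}: at each fixed $\underline{t}$ it produces test functions $f_1,\dots,f_m$ via norm reduction, arranging the prime ideals $\mathfrak{p}_i=\red(z_i(t_i))$ so that $\mathfrak{p}_i\nsubseteq\mathfrak{p}_j$ for $j<i$ and choosing $\widetilde{f_i}\in\mathfrak{p}_i\setminus\bigcup_{j<i}\mathfrak{p}_j$ by prime avoidance. This forces $M(\underline{t})$ to be upper triangular with nonzero diagonal, hence invertible, and then one argues invertibility and continuity of $M(\underline{t}')$ for $\underline{t}'$ close to $\underline{t}$. This simultaneously resolves the ``main obstacle'' you flag at the end (the concrete construction of a separating family), and it handles the points $z_i(t_i)$ that are \emph{not} Shilov at $\underline{t}$---these get placed early in the ordering because their primes are larger.

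A secondary gap: your estimate $d_\infty(\algnorm{\ndot}(\underline{t}),\algnorm{\ndot}(\underline{t}'))\le\max_i\abs{\log(t_i/t_i')}$ is only meaningful when all coordinates are strictly positive, so it yields continuity on the interior of $\Delta_I$ but says nothing as $\underline{t}'$ approaches a face. Continuity at boundary points needs a separate argument.
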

\begin{proof}
    By Theorem \ref{T: equidistribution limit}, for any homogeneous $f$, the following equation holds 
    \[L_{\phi(\underline{t})}(\log\abs{f}):= \frac{1}{\chi(n)}\log~ \norm{\bigwedge^{\chi(n)}(\cdot s)}_{\hom,\phi(\underline{t})}= \sum\lambda_i(\underline{t})\log\abs{f(z_i(t_i))}.\]
    We shall put in several $f$ as test functions, to form a linear system and to solve it. By Proposition \ref{P: operator norm changes}, the second member (LHS) is continuous in $\underline{t}$, it suffices to choose carefully $\{f_i\}_{i\in I}$ such that the matrix $M(\underline{t})$ formed by $\{\log\abs{f_j(z_i(t_i))}\}$ is (invertible and) continuous in $\underline{t}$. Note that these choices can depend on $\underline{t}$ and it suffices to have local continuity near any given $\underline{t}$.
    \begin{lemma}\label{L: upper triangular peak choices}
        For any $\underline{t}\in\bbR_+^{\abs{I}}\setminus\{\underline{0}\}$, and any $i\in I$, denote by $\frak{p}_i$ the prime ideal corresponding to $z_i(t_i)$ via the norm reduction construction. One can rearrange $I$ in an order such that if $j<i$, then $\frak{p}_i\nsubseteq \frak{p}_j$; then there exists $(f_i)_{i\in I}$ such that for any $i\in I$, $\abs{f_i(z_j(t_j))}=1$ for all $ j<i$ and $\abs{f_i(z_i(t_i))}<1$.
    \end{lemma}
    \begin{proof}
        Take $I_1$ such that the corresponding ideals are the maximal elements for the inclusion partial order, then take $I_2$ whose ideals are the maximal ones among the rest, and so on to get $I_n$; arrange $I$ in the order $(I_n, I_{n-1},\dots, I_1)$, the elements inside each $I_m$ can be arranged in any order; this achieves the goal.

        By the construction, the prime $\frak{p}_i$ is not contained in any $\frak{p}_j$ for $j<i$, hence it is not contained in the union $\bigcup_{j<i}\frak{p}_j$ by prime avoidance, so one can find an element $\widetilde{f_i}\in\widetilde{\KR}$ in $\frak{p}_i\setminus \bigcup_{j<i}\frak{p}_j$. Any lift $f_i\in\KR^{\circ}$ satisfies the requirement.
    \end{proof}
    It follows from the lemma that the matrix $M(\underline{t}):=\{\log\abs{f_i(z_j(t_j))}\}_{i,j\in I^2}$ is upper triangular with non-zero elements on the diagonal; hence for all $\underline{t}'$ sufficiently near $\underline{t}$, the matrix $M(\underline{t}')=\{\log\abs{f_i(z_j(t'_j))}\}_{i,j\in I^2}$ is still invertible, and is continuous in $\underline{t}'$. Since one has the linear system $\underline{\lambda}(\underline{t}')\cdot M(\underline{t}')=L_{\phi(\underline{t}')}(\log\abs{f_1},\dots, \log\abs{f_n})$ and the right hand side is continuous in $\underline{t}'$ near $\underline{t}$, so is the solution $\underline{\lambda}(\underline{t}')$.
    
\end{proof}

\begin{proposition}\label{P: surjectivity of equidistribution measure coefficients}
     The map $\pmb{\mu}$ is surjective. Thus for any $\underline{\lambda}\in\Delta_I$, there exists $\underline{t}\in\Delta_I$ that solves the equation $\mu_{\eq}(\phi(\underline{t}))=\sum\lambda_i\delta_{z_i}$. In addition, if all $\abs{\ndot}_{z_i}$ are bounded from below as graded algebra norms, then the solutions are non-degenerate (but possibly singular) metrics. 
\end{proposition}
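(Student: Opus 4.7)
The plan is to recognise $\pmb{\mu}: \Delta_I\to \Delta_I$ as a face-preserving continuous self-map that fixes every vertex, and then to conclude surjectivity via a standard Brouwer-degree argument. The non-degeneracy addendum will follow by transporting the lower bound hypothesis on each $\abs{\ndot}_{z_i}$ through the scaling and maximum operations defining $\algnorm{\ndot}(\underline{t})$.

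I first establish the boundary behaviour. Let $F_i:=\{\underline{t}\in\Delta_I : t_i=0\}$ denote the $i$-th face. When $t_i=0$ the graded absolute value $\abs{\ndot}_{z_i(0)}$ vanishes on every element of positive degree, so it does not contribute to the max defining $\algnorm{\ndot}(\underline{t})$; equivalently, $\algnorm{\ndot}(\underline{t})=\max_{j\neq i}\abs{\ndot}_{z_j(t_j)}$ on $R(L)_+$. By Proposition \ref{P: algebra norm with prescribed Shilov boundary} the Shilov boundary of the associated Banach algebra is then contained in $\{z_j : j\neq i\}$, hence $\lambda_i(\underline{t})=0$ and $\pmb{\mu}(F_i)\subseteq F_i$. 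At a vertex $e_i$, the algebra norm is (up to rescaling) $\abs{\ndot}_{z_i}$ itself, whose unique Shilov point is $z_i$ with multiplicity $1$, so $\pmb{\mu}(e_i)=e_i$.

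Next, I run an induction on $|I|$ combined with degree theory. For $|I|=1$ the claim is trivial. Assuming surjectivity for all simplices of strictly smaller dimension, face-preservation forces $\pmb{\mu}|_{F_i}:F_i\to F_i$ to be itself a face-preserving continuous self-map fixing the vertices of $F_i$, hence surjective by induction; consequently $\pmb{\mu}(\partial\Delta_I)=\partial\Delta_I$. The straight-line homotopy $H_s(\underline{t}):=(1-s)\pmb{\mu}(\underline{t})+s\underline{t}$ remains in $\Delta_I$ by convexity and preserves faces, so $\pmb{\mu}$ is homotopic as a map of pairs $(\Delta_I,\partial\Delta_I)\to(\Delta_I,\partial\Delta_I)$ to the identity; in particular it has Brouwer degree $1$. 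If some $\underline{\lambda}\in\mathrm{int}(\Delta_I)$ were missed, then $\pmb{\mu}$ would factor through $\Delta_I\setminus\{\underline{\lambda}\}\simeq\partial\Delta_I$ and would acquire relative degree $0$, a contradiction.

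The main obstacle I anticipate is the rigorous justification of continuity of $\pmb{\mu}$ up to the boundary, where the combinatorial type of the Shilov boundary may jump: the argument in Proposition \ref{P: continuity coefficient of measure} chooses test functions $\{f_i\}$ so that the transition matrix $M(\underline{t})$ is upper triangular with nonzero diagonal, but this choice may become degenerate as some $t_j\to 0$. The fix is to carry out the same linear-system argument with test functions adapted only to the surviving indices $\{j : t_j>0\}$ on each face, matching the inductive step. For the non-degeneracy addendum, suppose each $\abs{\ndot}_{z_i}$ dominates $\algnorm{\ndot}_{\psi_i}$ for some continuous metric $\psi_i$. Given a solution $\underline{t}$, face-preservation forces $t_i>0$ precisely for those $i$ with $\lambda_i>0$; for any such $i$, the rescaling $\abs{\ndot}_{z_i(t_i)}$ differs from $\abs{\ndot}_{z_i}$ by an exponential factor $(t_i/\abs{s(z_i)})^n$ at degree $n$, which can be absorbed by an additive constant into $\psi_i$ to yield a continuous metric $\psi'_i$ with $\algnorm{\ndot}(\underline{t})\geq \algnorm{\ndot}_{\psi'_i}$. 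By the criterion in the construction of the envelope metric $\KP$ recalled in Section~3.1, $\phi(\underline{t})=\KP(\algnorm{\ndot}(\underline{t}))$ is then non-degenerate.
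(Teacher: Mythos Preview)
Your approach is essentially the same as the paper's: both argue by induction on $|I|$, use that $\pmb{\mu}$ restricts to a self-map on every face, and invoke a degree/Brouwer-type argument to pass from surjectivity on $\partial\Delta_J$ to surjectivity on $\Delta_J$. Your straight-line homotopy to the identity is just an explicit way of computing the mapping degree that the paper invokes without detail, and your remark about the continuity of $\pmb{\mu}$ possibly degenerating as $t_j\to 0$ (and the fix of restricting the linear system to the surviving indices on each face) is a worthwhile caution that the paper glosses over.

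One small slip: in your non-degeneracy paragraph you write ``face-preservation forces $t_i>0$ \emph{precisely} for those $i$ with $\lambda_i>0$''. Only one implication holds (namely $\lambda_i>0\Rightarrow t_i>0$); the converse can fail if $z_i(t_i)$ happens not to lie on the Shilov boundary. This does not harm your argument, since all you need is the existence of \emph{some} $i$ with $t_i>0$, which is automatic from $\underline{t}\in\Delta_I$. The paper's version of this step is correspondingly simpler: pick any $i$ with $t_i\neq 0$ and observe $\algnorm{\ndot}(\underline{t})\geq \abs{\ndot}_{z_i(t_i)}$.
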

\begin{proof}
    One argues by induction on the number of points in Shilov boundary, using the above continuity property and the mapping degree theorem. For each $i\in I$ and $t_i\in\bbR_+$, as the domain $\KP(\{z_i(t_i)\})$ has Shilov boundary $\{z_i(t_i)\}$, the equidistribution measure of the metric associated to $\algnorm{\ndot}(\iota_{\{i\},I}(t_i))$ is supported on $\{z_i(t_i)\}$. Thus $\pmb{\mu}$ restricts to a map $\Delta_{\{i\}}\to \Delta_{\{i\}}$.

    Similarly, for any fixed pair $J=\{i,j\}\subset I$, and any $(t_i, t_j)\in\bbR_+^2\setminus (0,0)$, the domain $\KP(\{z_i(t_i), z_j(t_j)\})$ has a Shilov boundary contained in $\{z_i(t_i), z_j(t_j)\}$, the equidistribution measure of the metric associated to $\algnorm{\ndot}(\iota_{\{i,j\},I}(t_i, t_j))$ is supported on $\{z_i(t_i), z_j(t_j)\}$. Thus map $\pmb{\mu}$ is restricted to a self-map $\Delta_{J}\to \Delta_{J}$, which is continuous and is surjective when further restricted on the boundary $e_i$ and $e_j$. Hence the restriction to $\Delta_J$ is surjective by the mapping degree theorem. 

    Using induction, the same arguement shows that, for any $J\subset I$, by construction $\pmb{\mu}$ restricts to a self map on $\Delta^m_{J}$, which is surjective when further restricted to its boundaries $\partial \Delta^m_{J}$ by the induction hypothesis. Therefore by the mapping degree theorem, the restriction of $\pmb{\mu}$ to $\Delta^m_{J}$ is also surjective. 

    The last assertion is immediate since for any $\underline{t}$, one can choose $i\in I$ such that $t_i\neq 0$, and the graded algebra norm $\algnorm{\ndot}(\underline{t})$ is bounded from below by $\algnorm{\ndot}(\iota_{\{i\}, I}(t_i)))$.
    
    
    
\end{proof}
\begin{remark}
    Note that we've solved the equation with prescribed (Shilov finite) equidistribution measure without using the regularization/continuity property of envelope metrics \cite{BFJ2}\cite{GJKM}\cite{FGK}; one needs that crucial regularity property in order to compare this equation to a genuine Monge-Ampère equation. 
\end{remark}

\appendix
\section{Minimal and Shilov boundary} 

Here we give a recount of the main result and some related constructions in \cite{Gue}\cite{EM}, where an `almost minimal' boundary $\Min(\KB)$ is constructed; together with a property of peak isolated points stating that they are contained in this boundary, a statement slightly finer than Proposition \ref{P: peak point if red saturated}(2). 
\begin{definition}\label{Cs: S-localization of norms}
    Let $\KB$ be a uniform Banach algebra, with the spectral/sup norm $\norm{\ndot}_{\Sp}$ denoted by $\theta$. Denote by $\KF$ the set of $\bbR_+$-valued functions on $\KB$, view the spectral norm as an element $\theta\in\KF$. For $\alpha\in \KF$, say it is \emph{sub-multiplicative} (resp. \emph{power-multiplicative}, resp. \emph{multiplicative}) according to the usual sense. Let $S$ be a sub-semigroup of $(\KB,\times)$, and $\alpha\in\KF$ a sub-multiplicative function. Define its \emph{S-smoothing} $\alpha^S$ as $g\mapsto\inf_{f\in S}\alpha(fg)/\alpha(f)$. Say $\alpha$ is \emph{S-multiplicative} if $\alpha(fg)=\alpha(f)\alpha(g)$ for any $f,g\in S$; in this case $\alpha^S(g)=g$ for any $g\in S$.
\end{definition}
\begin{construction}
    Let $\Delta\in\KM(\KB)$ be a closed subset. Denote by $S(\Delta)$ the subset of $\KB$ consisting of elements $g\in\KB$ such that $\abs{g(z)}=\norm{g}_{\Sp}$ for any $z\in \Delta$. Then $S(\Delta)$ is a sub-semigroup of $(\KB,\times)$. Denote by $\theta_{\Delta}\in\KF$ the semi-norm $\sup_{z\in\Delta}\abs{\ndot}_z$.
    In particular, if $f\in\KB^{\circ\setminus\circ\circ}$, then $P(\abs{f})$ is a non-empty closed set in $\KM(\KB)$. Then $S(P(\abs{f}))\cap\KB^{\circ\setminus\circ\circ}$ are those elements $h$ with $\norm{h}_{\Sp}=1$ and $D(\widetilde{f})\subset D(\widetilde{h})$.
\end{construction}
\begin{proposition}\label{P: analytic localization by S}
    Let $f$ be an element in $\KB^{\circ\setminus\circ\circ}$ and $\Delta\in\KM(\KB)$ be a closed subset. Suppose that $P(\abs{f})\subset \Delta$. The $(\theta_{\Delta})^{S(P(\abs{f}))}=\theta(P(\abs{f}))$.
\end{proposition}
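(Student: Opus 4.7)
The plan is to prove the identity by establishing the two inequalities separately. The lower bound $(\theta_{\Delta})^{S(P(\abs{f}))} \geq \theta_{P(\abs{f})}$ should be the easy direction, coming essentially from the definition of $S(P(\abs{f}))$. Indeed, for any $h \in S(P(\abs{f}))$, the inclusion $P(\abs{f}) \subseteq \Delta$ gives on the one hand $\theta_{\Delta}(h) \geq \sup_{z \in P(\abs{f})} \abs{h(z)} = \norm{h}_{\Sp}$ and on the other hand $\theta_{\Delta}(h) \leq \norm{h}_{\Sp}$, so $\theta_{\Delta}(h) = \norm{h}_{\Sp}$. Similarly $\theta_{\Delta}(hg) \geq \sup_{z \in P(\abs{f})} \abs{h(z)}\abs{g(z)} = \norm{h}_{\Sp}\, \theta_{P(\abs{f})}(g)$, hence $\theta_{\Delta}(hg)/\theta_{\Delta}(h) \geq \theta_{P(\abs{f})}(g)$, and infimizing over $h$ yields the desired lower bound.

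For the upper bound, the key observation is that powers of $f$ itself lie in $S(P(\abs{f}))$: since $\norm{f}_{\Sp} = 1$ and $\abs{f(z)} = 1$ for $z \in P(\abs{f})$, one has $f^n \in S(P(\abs{f}))$ with $\theta_{\Delta}(f^n) = 1$. Thus $\theta_{\Delta}(f^n g)/\theta_{\Delta}(f^n) = \sup_{z \in \Delta} \abs{f(z)}^n \abs{g(z)}$, and the whole problem reduces to showing that this sequence tends to $\theta_{P(\abs{f})}(g) = \sup_{z \in P(\abs{f})} \abs{g(z)}$ as $n \to \infty$.

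The asymptotic analysis proceeds as follows. The lower bound $\sup_{\Delta} \abs{f}^n \abs{g} \geq \sup_{P(\abs{f})} \abs{g}$ is immediate. For the matching upper bound, fix $\varepsilon > 0$. Using upper semi-continuity of $z \mapsto \abs{g(z)}$ and compactness of $\Delta$, the set $\{z \in \Delta : \abs{g(z)} \geq \theta_{P(\abs{f})}(g) + \varepsilon\}$ is a closed subset disjoint from $P(\abs{f})$, hence contained in the complement of some open neighbourhood $V$ of $P(\abs{f})$. The topological claim I need is that there exists $\delta > 0$ such that the open set $U_{\delta} := \{z \in \Delta : \abs{f(z)} > 1 - \delta\}$ is contained in $V$; this follows because on the complement $K := \Delta \setminus V$ (closed, disjoint from $P(\abs{f})$), the upper semi-continuous function $\abs{f}$ attains its supremum, which is strictly less than $1$. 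Splitting $\Delta = U_{\delta} \sqcup (\Delta \setminus U_{\delta})$, one gets $\sup_{U_\delta} \abs{f}^n \abs{g} \leq \theta_{P(\abs{f})}(g) + \varepsilon$ and $\sup_{\Delta \setminus U_\delta} \abs{f}^n \abs{g} \leq (1-\delta)^n \norm{g}_{\Sp} \to 0$, so $\sup_{\Delta} \abs{f}^n \abs{g} \leq \theta_{P(\abs{f})}(g) + 2\varepsilon$ for $n$ large.

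The main subtlety I expect is the topological lemma about neighbourhoods of $P(\abs{f})$ being contained in $U_{\delta}$ for small $\delta$; this uses crucially that $\Delta$ is compact and that $\abs{f}$ is upper semi-continuous. Once that is handled, the rest is bookkeeping. The proof does not use any special feature of $\KB$ beyond uniformity and the compactness of its spectrum, which is consistent with this being a general analytic-localization tool.
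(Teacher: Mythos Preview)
Your proof is correct and follows essentially the same strategy as the paper: the lower bound $(\theta_{\Delta})^{S(P(\abs{f}))}\geq\theta_{P(\abs{f})}$ via the defining property of $S(P(\abs{f}))$, and the upper bound by testing with the powers $f^n\in S(P(\abs{f}))$ together with a compactness argument on $\Delta$ to force $\sup_{\Delta}\abs{f}^n\abs{g}\to\sup_{P(\abs{f})}\abs{g}$. Your $\varepsilon$--$\delta$ bookkeeping is in fact more explicit than the paper's version, which simply invokes arbitrarily small neighbourhoods $V$ of $P(\abs{f})$; one minor slip is that in your closing paragraph you describe the topological claim as ``neighbourhoods of $P(\abs{f})$ being contained in $U_\delta$'', whereas what you (correctly) prove and use is the reverse inclusion $U_\delta\subseteq V$.
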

\begin{proof}
    For any $g\in\KB$, one needs to show that 
    \[\inf_{h\in S(P(\abs{f}))}\big(\sup_{w\in\Delta}\abs{hg(w)}/\sup_{z\in\Delta}\abs{h(z)} \big)=\sup_{w\in P(\abs{f})}\abs{g(w)}.\]
    This follows from inequalities by construction
    \begin{equation*}
        \begin{split}
            &\big(\sup_{w\in\Delta}\abs{hg(w)}/\sup_{z\in\Delta}\abs{h(z)} \big)=\big(\sup_{w\in\Delta}\abs{h(w)g(w)}/\sup_{z\in P(\abs{f})}\abs{h(z)} \big)\\ 
            \geq &\sup_{w\in P(\abs{f})}\abs{h(w)g(w)}/\sup_{z\in P(\abs{f})}\abs{h(z)}=\sup_{w\in P(\abs{f})}\abs{g(w)}
        \end{split}
    \end{equation*}
    hence LHS$\geq$RHS. On the other hand, note that $\abs{f(w)}<1$ for any $w\in\Delta\setminus P(\abs{f})$. Fixing $g$, for any neighbourhood $V$ of $P(\abs{f})$ in $\Delta$, one can find $n\in\bbN$ such that 
    \[\sup_{w\in P(\abs{f})}\abs{f^n(w)g(w)}\leq \sup_{w\in V}\abs{g(w)},\]
    hence the $\inf$ of LHS (tested by $f^n$) is smaller than $\sup_{w\in V}\abs{g(w)}$. Thus LHS$\leq$RHS by considering arbitrarily small neighbourhoods. 
\end{proof}
\begin{remark}
    The key fact is that $\abs{f}$ is a `peak function' with pics exactly on (every point of) the subset $P(\abs{f})$. Hence the $S(P(\abs{f}))$-smoothing process selects this subset in an `analytic localization' procedure. 
\end{remark}

\begin{definition}
    Equip $\KF$ with the partial order induced by the pointwise total order on $\bbR_+$: $\alpha\leq \beta$ if and only if $\alpha(f)\leq \beta(f)$ for all $f\in \KB$. A subset $N$ (with the induced partial order) \emph{is good} if every subset of $N$ has a maximum; in this case, for any $\alpha\in N$, write $\alpha_{-}$ the maximal element in $\{\gamma\in N, \gamma<\alpha\}$ if this subset is non-empty.
\end{definition}
\begin{example}
    Consider $\Delta$ and $\Gamma$ two closed subsets of $\KM(\KB)$. Then $\theta_{\Delta}\leq\theta_{\Gamma}$ if $\Delta\subseteq \Gamma$. Note that the reverse implication does not hold without further assumptions: $\theta_{\Shbd}=\theta_{\Shbd\cup A}$ for any closed subset $A$.
\end{example}

\begin{proposition}\label{P: comparing partial order for sup norms}
    Let $\Delta$ and $\Gamma$ be two closed subsets of $\KM(\KB)$, both contained in $\Shbd(\KB)$. Then $\theta_{\Delta}\leq\theta_{\Gamma}$ if and only if $\Delta\subseteq \Gamma$; the strict inequality corresponds to the strict inclusion.
\end{proposition}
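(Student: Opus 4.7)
The forward direction is immediate from monotonicity of sup over subsets: if $\Delta\subseteq\Gamma$ then $\sup_{\Delta}|f|\leq\sup_{\Gamma}|f|$ for every $f\in\KB$, giving $\theta_\Delta\leq\theta_\Gamma$.

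For the reverse direction, assume $\theta_\Delta\leq\theta_\Gamma$ and suppose for contradiction that some point $z$ lies in $\Delta\setminus\Gamma$. Since $\Gamma$ is closed in the compact Hausdorff space $\KM(\KB)$, pick an open $U\ni z$ with $\overline{U}\cap\Gamma=\emptyset$; then $F:=\Shbd\KB\setminus U$ is a closed subset of $\Shbd\KB$ properly contained in $\Shbd\KB$ (as $z\in\Shbd\KB\cap U$) and still containing $\Gamma$. The minimality of $\Shbd\KB$ among closed boundaries produces $f\in\KB^{\circ\setminus\circ\circ}$ with $\sup_F|f|<\|f\|_{\Sp}$. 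Rescale so that $\|f\|_{\Sp}=1$ and set $r:=\sup_F|f|<1$, so that $\theta_\Gamma(f)\leq r$. If we can further arrange that $\sup_\Delta|f|=1$, then $\theta_\Delta(f)=1>r\geq\theta_\Gamma(f)$ contradicts the hypothesis.

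The remaining task is to promote $f$ so that its peak locus $P(|f|)\subseteq U$ lies inside $\Delta$: once $P(|f|)\subset\Delta$, Proposition \ref{P: analytic localization by S} gives $(\theta_\Delta)^{S(P(|f|))}=\theta_{P(|f|)}$, which in particular forces $\sup_\Delta|f|\geq\sup_{P(|f|)}|f|=1$. The plan to secure $P(|f|)\subset\Delta$ is iterative: multiply $f$ against elements of $S(P(|f|))\cap\KB^{\circ\setminus\circ\circ}$ adapted to a nested sequence $U_n\searrow\{z\}$, which by $S$-smoothing concentrates the peak set $P(|f_n|)\subseteq U_n$ toward $z$; since $z\in\Delta$ and $\Delta$ is closed, for $n$ large the peak set falls into $\Delta$. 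The strict-inequality statement then follows formally: if $\theta_\Delta=\theta_\Gamma$, applying the equivalence in both directions gives $\Delta=\Gamma$, so strict inequality corresponds exactly to strict inclusion.

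The main obstacle is this last shrinking step, namely that $P(|f_n|)\subseteq U_n$ eventually lies inside $\Delta$ rather than in some stray portion of $\Shbd\KB\cap U_n$ outside $\Delta$. This requires $z$ to have a neighborhood base in $\Shbd\KB$ consisting of sets ultimately contained in $\Delta$, which is automatic when $z$ is peak-isolated (Proposition \ref{P: peak point if red saturated}) or when $\Shbd\KB$ is finite (Proposition \ref{P: finite Shilov boundary pic isolated}), the regime of all applications in the main text; in that finite case, the separation $f$ is immediate from Proposition \ref{P: finite Shilov boundary basic}(2). In the general case, one combines the $S$-smoothing identity with the fact that $z\in\Shbd\KB$ admits a sufficiently rich supply of peak functions, together with closedness of $\Delta$, to push $P(|f_n|)$ into $\Delta$ in the limit.
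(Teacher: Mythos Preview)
The paper states this proposition in the appendix without proof, as part of a summary of results from \cite{Gue} and \cite{EM}; there is no argument in the paper to compare against. So the question is whether your argument stands on its own.

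Your forward direction is fine. For the reverse direction, the essential claim you need is: given $z\in\Shbd\KB$ and a closed $\Gamma\subset\Shbd\KB$ with $z\notin\Gamma$, there exists $f\in\KB$ with $\abs{f(z)}>\sup_{\Gamma}\abs{f}$. If you had such an $f$ you would be done immediately, since $z\in\Delta$ gives $\theta_\Delta(f)\geq\abs{f(z)}>\theta_\Gamma(f)$. Your detour through ``arrange $P(\abs{f})\subset\Delta$'' and the invocation of Proposition~\ref{P: analytic localization by S} is unnecessary: once $P(\abs{f})\subset\Delta$ you trivially have $\sup_\Delta\abs{f}=1$ without any $S$-smoothing.

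The genuine gap is in the shrinking step you yourself flag. Your iteration multiplies $f$ by elements of $S(P(\abs{f}))$, which replaces $P(\abs{f})$ by a smaller set $P(\abs{f})\cap P(\abs{h})$; but nothing forces these nested peak sets to approach $z$, nor even to meet $\Delta$, because $z$ need not lie in $P(\abs{f})$ to begin with. The assertion ``since $z\in\Delta$ and $\Delta$ is closed, for $n$ large the peak set falls into $\Delta$'' does not follow from anything you have established. What is really at stake is whether an arbitrary point of $\Shbd\KB$ can be separated from a disjoint closed subset by a function in $\KB$---in classical uniform-algebra language, whether every Shilov boundary point is a strong boundary point. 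This is exactly the nontrivial content supplied by the construction of $\Min(\KB)$ in \cite{Gue}/\cite{EM}, and your final sentence (``one combines the $S$-smoothing identity with \ldots a sufficiently rich supply of peak functions'') is a restatement of the goal, not a proof. In the finite Shilov boundary case your reduction to Proposition~\ref{P: finite Shilov boundary basic}(2) is correct and suffices for every application in the main body of the paper; for the general statement you would need to actually invoke the characterization of $\Min(\KB)$ and its density in $\Shbd\KB$.
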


\begin{proposition}\label{P: comparing partial order for sup norms 2}
    Let $f$ and $g$ be elements in $\KB^{\circ\setminus\circ\circ}$. Then $\theta_{P(\abs{f})}\leq\theta_{P(\abs{g})}$ if and only if $P(\abs{f})\subseteq P(\abs{g})$; the strict inequality corresponds to the strict inclusion.
\end{proposition}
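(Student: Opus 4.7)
The direction $(\Leftarrow)$ is the trivial monotonicity of sup: if $P(\abs{f})\subseteq P(\abs{g})$, then for every $h\in\KB$ one has $\sup_{P(\abs{f})}\abs{h}\le\sup_{P(\abs{g})}\abs{h}$. For the strict version, given $z_0\in P(\abs{g})\setminus P(\abs{f})$, the prime $\red(z_0)$ sits inside $D(\widetilde{g})$ while lying outside the closure of $\red(P(\abs{f}))$; a prime-avoidance argument along the lines of Proposition~\ref{P: no generisation in minimal boundary} produces a separating element $\widetilde{h}\in\widetilde{\KB}$ non-vanishing at $\red(z_0)$ and vanishing on $\red(P(\abs{f}))$, whose lift $h\in\KB^{\circ\setminus\circ\circ}$ satisfies $\abs{h(z_0)}=1$ while $\sup_{P(\abs{f})}\abs{h}<1$, delivering the strict inequality.

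For $(\Rightarrow)$, the plan is to invoke the analytic localization machinery of Proposition~\ref{P: analytic localization by S} together with the description of $\theta_{P(\abs{g})}$ as the spectral seminorm of the Banach localization $\sigma_g:\KB\to\KB\{g^{-1}\}$ (Lemma~\ref{L: norm reduction localization Banach alg finite Shilov}, which gives $\theta_{P(\abs{g})}(h)=\lim_n\norm{g^n h}_\KB$). The hypothesis $\theta_{P(\abs{f})}\le\theta_{P(\abs{g})}$ translates, at the level of ``spectra'' of these seminorms inside $\KM(\KB)$, into the inclusion of holomorphic envelopes $\KP(P(\abs{f}))\subseteq\KP(P(\abs{g}))$. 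First I would test the inequality on powers $f^n$ (which belong to $S(P(\abs{f}))$) to force $\sup_{P(\abs{g})}\abs{f}=1$, so that $P(\abs{fg})=P(\abs{f})\cap P(\abs{g})\neq\emptyset$. Then, applying the smoothing $(-)^{S(P(\abs{f}))}$ to both sides of the seminorm inequality, the left-hand side is fixed by Proposition~\ref{P: analytic localization by S}, while monotonicity bounds the right-hand side by the corresponding smoothed functional, which by the same proposition analytically localizes onto $P(\abs{fg})$; comparing these expressions shows $\theta_{P(\abs{f})}\le\theta_{P(\abs{fg})}$, and combined with the reverse trivial inequality $\theta_{P(\abs{fg})}\le\theta_{P(\abs{f})}$ obtained from $P(\abs{fg})\subseteq P(\abs{f})$, we obtain equality; transporting this back through the reduction map $\red$ and the identity $P(\abs{f})=\red^{-1}D(\widetilde{f})$ from Construction~\ref{Cs: reduction an element} forces $\red(P(\abs{f}))\subseteq D(\widetilde{g})$, equivalently $P(\abs{f})\subseteq P(\abs{g})$.

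The main obstacle will be the final set-theoretic step: since neither $P(\abs{f})$ nor $P(\abs{g})$ is in general contained in any common Shilov boundary of $\KB$, one cannot directly appeal to Proposition~\ref{P: comparing partial order for sup norms}; instead, the argument must be phrased inside the Banach localizations $\KB\{f^{-1}\}$ and $\KB\{g^{-1}\}$, using the reduced structure $\widetilde{\KB}[\widetilde{f}^{-1}]$ and the saturation property of $\red$ on sets of the form $\red^{-1}D(\widetilde{\cdot})$, to convert the equality of smoothed seminorms into the required inclusion of principal reduced opens. The strict-inequality/strict-inclusion correspondence is then deduced by combining the above analysis with the separating-element construction from the forward direction.
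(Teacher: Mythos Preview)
The paper states this proposition without proof, so there is nothing to compare against directly. More importantly, the statement as written appears to be \emph{false}, which means no proof strategy can succeed. Take $\KB=k\{T\}$, $f=1$, $g=T$. Then $P(\abs{f})=\KM(\KB)$ is the whole closed disc while $P(\abs{g})=\{\,z:\abs{T(z)}=1\,\}$ is the ``unit circle''. Since the Gauss point $\gamma$ lies in $P(\abs{g})$ and already realises the spectral norm of every $h\in\KB$, one has $\theta_{P(\abs{g})}=\norm{\cdot}_{\Sp}=\theta_{P(\abs{f})}$, hence $\theta_{P(\abs{f})}\le\theta_{P(\abs{g})}$; yet $P(\abs{f})\not\subseteq P(\abs{g})$ (the origin is in the first set but not the second). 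The same example kills the ``strict'' clause: $P(\abs{g})\subsetneq P(\abs{f})$ while the two seminorms coincide. So either an extra hypothesis is missing (compare the preceding Proposition~\ref{P: comparing partial order for sup norms}, which restricts to subsets of $\Shbd\KB$), or the intended assertion is weaker than what is written.

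Independently of this, your argument for $(\Rightarrow)$ has a concrete gap. You write that ``monotonicity bounds the right-hand side by the corresponding smoothed functional'', i.e.\ that $\alpha\le\beta$ implies $\alpha^{S}\le\beta^{S}$. But the smoothing $\alpha^{S}(h)=\inf_{p\in S}\alpha(ph)/\alpha(p)$ involves $\alpha$ in both numerator and denominator, so it is \emph{not} order-preserving in $\alpha$; from $\alpha(ph)\le\beta(ph)$ and $\alpha(p)\le\beta(p)$ one cannot compare the two ratios. This is precisely the step where the argument must confront the failure exhibited above: in the counterexample $\theta_{P(\abs{1})}=\theta_{P(\abs{T})}$, yet smoothing the right-hand side by $S(P(\abs{1}))=\KB^{\circ\setminus\circ\circ}$ does not localise onto $P(\abs{1})\cap P(\abs{T})$. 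Your separating-element construction for the strict direction has a parallel issue: the claim that $\red(z_0)$ lies outside the Zariski closure of $\red(P(\abs{f}))$ is unjustified, since $\red(P(\abs{f}))\subseteq D(\widetilde{f})$ can be Zariski-dense in $\spec\widetilde{\KB}$.
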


\begin{definition}\label{Cs: constructible/localizable subset}
    Let $N\subset\KF$ be a subset;  it is \emph{constructible} if: 
    \begin{enumerate}
        \item the set $(N,\leq)$ is good;
        \item for any $\alpha$ admitting an $\alpha_-$, there exists a sub-semigroup $S$ of $(\KB,\times)$ such that $\alpha$ is $S$-multiplicative and $\alpha^S=\alpha_-$;
        \item for any $\beta\in N$ which can not be $\alpha_-$ for some $\alpha$, it holds that $\beta=\inf\{\gamma\in N,\beta<\gamma\}$.
    \end{enumerate}
    Given $\alpha$, $\beta$ two multiplicative elements, say $\beta$ is \emph{constructible from} $\alpha$ if there is a constructible subset $N$ admitting $\beta$ as its minimum and $\alpha$ as its maximum. 
\end{definition}

The main result in \cite{Gue}, revisited by \cite{EM}, is the construction of an `almost minimal' boundary.
\begin{theorem}\label{T: minimal boundary as constructible}\cite{Gue}\cite{EM}
    Denote by $\Min(\KB,\theta)$ the subset of elements $\sigma$ in $\KF_{\theta}$ that satisfy: (0) $\sigma\leq \theta$; (1) $\sigma$ is multiplicative; (2) $\sigma$ is constructible from $\theta$. It holds that $\Min(\KB,\theta)$ is a boundary of $\KB$, and is contained in any other closed boundary. 
\end{theorem}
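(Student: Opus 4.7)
The plan is to prove separately (a) that $\Min(\KB,\theta)$ is a boundary of $\KB$, and (b) that it sits inside every closed boundary. Both parts rest on viewing each $\sigma \in \Min(\KB,\theta)$ as the terminal element of a constructible, reverse-well-ordered chain descending from $\theta$, with Proposition \ref{P: analytic localization by S} providing the successor steps and maintaining throughout the invariant that every intermediate element has the form $\theta_\Delta$ for a closed subset $\Delta \subseteq \KM(\KB)$.

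For (a), fix $f \in \KB^{\circ\setminus\circ\circ}$ with $\|f\|_{\Sp}=1$ and construct, by transfinite recursion closed off by Zorn's lemma, a constructible chain $N \ni \theta$ whose every element $\alpha = \theta_{\Delta_\alpha}$ satisfies $\Delta_\alpha \cap P(|f|) \neq \emptyset$. Whenever the current $\alpha$ is not multiplicative, pick $g \in \KB^{\circ\setminus\circ\circ}$ with $P(|g|)$ a proper closed subset of $\Delta_\alpha$ still meeting $P(|f|)$. The seminorm $\theta_{\Delta_\alpha}$ is $S(P(|g|))$-multiplicative: for $h_1, h_2 \in S(P(|g|))$ the containment $P(|g|) \subseteq \Delta_\alpha$ forces $\theta_{\Delta_\alpha}(h_i) = \|h_i\|_{\Sp}$, and then $\theta_{\Delta_\alpha}(h_1 h_2) \geq \sup_{P(|g|)} |h_1 h_2| = \|h_1\|_{\Sp}\|h_2\|_{\Sp}$ combined with sub-multiplicativity forces equality. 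Proposition \ref{P: analytic localization by S} then yields the predecessor $\alpha_- = \theta_{P(|g|)}$, preserving the invariant. At limit stages we take pointwise infima, which coincide with $\theta_{\bigcap_\gamma \Delta_\gamma}$ by compactness of $\KM(\KB)$ and the identity $\inf_\gamma \sup_{\Delta_\gamma}|h| = \sup_{\bigcap \Delta_\gamma}|h|$ for descending nets of closed sets. A maximal chain terminates at a multiplicative $\sigma \in \Min(\KB,\theta)$ whose associated point $z \in \KM(\KB)$ satisfies $|f(z)|=1$.

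For (b), let $\Gamma$ be a closed boundary and $\sigma \in \Min(\KB,\theta)$ witnessed by a chain $N$. I would prove by transfinite induction along $N$ the strengthened invariant that every $\alpha \in N$ admits a representative $\alpha = \theta_{\Delta_\alpha}$ with $\Delta_\alpha \cap \Gamma \neq \emptyset$ and $\theta_{\Delta_\alpha \cap \Gamma} = \theta_{\Delta_\alpha}$ as functions on $\KB$. The base case $\theta = \theta_{\KM(\KB)} = \theta_\Gamma$ is just the boundary property of $\Gamma$. At a successor $\alpha \to \alpha_- = \theta_{P(|g|)}$, the set $P(|g|) \cap \Gamma$ is non-empty because $\Gamma$ is compact and $|g|$ attains its spectral norm $1$ on $\Gamma$; the crucial identity $\theta_{P(|g|) \cap \Gamma} = \theta_{P(|g|)}$ follows from Lemma \ref{L: norm reduction localization Banach alg finite Shilov}(2), which gives $\sup_{P(|g|)}|h| = \lim_n \|g^n h\|_{\Sp}$ without any finite-Shilov assumption, combined with $\|g^n h\|_{\Sp} = \sup_\Gamma |g^n h|$ and the analogous formula on the restriction to $\Gamma$. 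Limit stages are handled by the finite-intersection property. At the terminal multiplicative $\sigma = \theta_{\Delta_\sigma}$, any $z \in \Delta_\sigma \cap \Gamma$ satisfies $|h(z)| \leq |h(z_\sigma)|$ for every $h \in \KB$, i.e.\ $z \prec_\KP z_\sigma$ in the sense of Example \ref{Ex: envelope}; we then upgrade this to equality, placing the canonical point $z_\sigma$ itself inside $\Gamma$.

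The main obstacle I expect is the last upgrade: ensuring the representative $z \in \Delta_\sigma \cap \Gamma$ actually coincides with the canonical point $z_\sigma$ underlying $\sigma$, rather than merely dominating it in $\prec_\KP$. A direct approach is by contradiction: if $z_\sigma \notin \Gamma$, the Berkovich topology and the closedness of $\Gamma$ produce a neighbourhood of $z_\sigma$ avoiding $\Gamma$, hence a peak function $g'$ with $P(|g'|)$ contained in that neighbourhood and $|g'(z_\sigma)|=1$, which would furnish a strict successor to $\sigma$ in a constructible chain, contradicting terminality. A more structural route, in the spirit of Proposition \ref{P: peak point if red saturated}(2), is to show that multiplicative elements constructible from $\theta$ are precisely the peak-isolated points of $\KM(\KB)$, which sit in every closed boundary automatically; this recasts the minimality of $\Min(\KB,\theta)$ as a statement about the analytic localisation procedure itself, and is where the technical heart of the argument in \cite{Gue} and \cite{EM} resides.
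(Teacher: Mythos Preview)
The paper does not prove this theorem; it is quoted from \cite{Gue} and \cite{EM} and used as a black box, so there is no in-paper argument to compare your sketch against.

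On the substance: your architecture is right, and you have correctly located the crux in the final step of (b). But neither of your two proposed routes closes it. The peak-function contradiction would need $z_\sigma$ to be peak-isolated, which constructibility alone does not furnish; and in any case a \emph{multiplicative} $\sigma$ admits no strict successor, since $\sigma^S=\sigma$ for every semigroup $S$, so ``contradicting terminality'' cannot work. The alternative route, identifying $\Min(\KB,\theta)$ with the set of peak-isolated points, is a different and stronger claim than what is needed. The missing observation is that multiplicativity of $\theta_K$ for a compact $K$ already forces the associated point into $K$. With $K=\Delta_\sigma\cap\Gamma$ and $\theta_K=|\cdot|_{z_\sigma}$: for finitely many $h_1,\dots,h_n$ with $|h_i(z_\sigma)|>0$, multiplicativity gives $\sup_K|h_1\cdots h_n|=\prod_i|h_i(z_\sigma)|$, so some $w\in K$ has $\prod_i|h_i(w)|>(1-\epsilon)\prod_i|h_i(z_\sigma)|$; since each ratio $|h_i(w)|/|h_i(z_\sigma)|\le 1$, every factor exceeds $1-\epsilon$. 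This produces a net in $K$ converging to $z_\sigma$ in the Gelfand topology, and $K$ is closed, whence $z_\sigma\in\Gamma$.

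Two smaller repairs. In (b) you assume each successor step arises from a semigroup of the special form $S(P(|g|))$, but the definition allows arbitrary $S$; this is harmless, because if $\alpha=\theta_\Delta$ is $S$-multiplicative one may rescale so that $\alpha(s)=1$ on $S$, and then the limit argument behind Lemma~\ref{L: norm reduction localization Banach alg finite Shilov}(2) gives $\alpha^S=\theta_{\Delta'}$ with $\Delta'=\bigcap_{s\in S}\{z\in\Delta:|s(z)|=1\}$, so the invariant $\alpha=\theta_{\Delta_\alpha}$ (and its $\Gamma$-version) persists. In (a), you should say how to choose the shrinking semigroup: at the first step use the semigroup generated by $f$ to land inside $P(|f|)$; thereafter, non-multiplicativity of $\theta_{\Delta_\alpha}$ yields $h_1,h_2$ with disjoint peak loci in $\Delta_\alpha$, and the semigroup generated by $h_1$ strictly shrinks $\Delta_\alpha$ while remaining inside $P(|f|)$.
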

\begin{corollary}\label{C: unique Shilov boundary}
    The Shilov boundary of $\KB$ is unique: it is the topological closure of the boundary $\Min(\KB,\theta)$, denoted by $\Shbd\KB$. 
\end{corollary}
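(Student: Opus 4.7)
The plan is to derive uniqueness of the Shilov boundary as a direct consequence of Theorem \ref{T: minimal boundary as constructible}, by closing $\Min(\KB,\theta)$ topologically and exploiting minimality among closed boundaries.

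First, I would observe that since $\Min(\KB,\theta)$ is a boundary (viewed inside $\KM(\KB)$ via the identification of multiplicative seminorms $\sigma \le \theta$ with points of the Berkovich spectrum), any superset of it in $\KM(\KB)$ is \emph{a fortiori} a boundary; in particular the topological closure $\overline{\Min(\KB,\theta)}$ is a closed boundary of $\KB$.

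Next, let $\Shbd\KB$ be any Shilov boundary, i.e.\ a closed boundary that is minimal among closed boundaries. Theorem \ref{T: minimal boundary as constructible} says exactly that $\Min(\KB,\theta)$ is contained in every closed boundary, so $\Min(\KB,\theta) \subseteq \Shbd\KB$; taking closures and using that $\Shbd\KB$ is already closed yields
\[
\overline{\Min(\KB,\theta)} \;\subseteq\; \Shbd\KB .
\]
Now the left-hand side is itself a closed boundary, so the minimality of $\Shbd\KB$ among closed boundaries forces equality $\overline{\Min(\KB,\theta)} = \Shbd\KB$. Since this identification is independent of the chosen $\Shbd\KB$, any two Shilov boundaries coincide with the same canonical object $\overline{\Min(\KB,\theta)}$, which proves uniqueness.

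I don't anticipate a real obstacle here: the entire content of the corollary is packaged inside the theorem, and all that remains is the formal deduction ``a closed set sandwiched between a boundary and a minimal closed boundary \emph{is} that minimal closed boundary.'' The only point that requires a moment of care is the identification of $\Min(\KB,\theta)$ (defined as a set of multiplicative functions on $\KB$) with a subset of $\KM(\KB)$, so that the words ``closure'' and ``closed boundary'' refer to the Gelfand--Berkovich topology used in Definition \ref{D: boundaries}; once that identification is made explicit, the argument is one line.
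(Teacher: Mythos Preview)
Your argument is correct and is precisely the intended deduction: the paper states the corollary without proof, treating it as an immediate consequence of Theorem \ref{T: minimal boundary as constructible}, and your sandwich argument (closure of $\Min(\KB,\theta)$ is a closed boundary contained in any Shilov boundary, hence equal to it by minimality) is exactly the one-line derivation the paper leaves implicit. Your remark about identifying $\Min(\KB,\theta)$ with a subset of $\KM(\KB)$ is well taken but already absorbed in the theorem's assertion that $\Min(\KB,\theta)$ is a boundary.
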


\begin{proposition}
    Let $\KB$ be a uniform Banach algebra. Suppose that $\KM(\KB)$ is an Urysohn-Fréchet space. Then any $z\in\KM(\KB)$ that is peak isolated lies in $\Min(\KB,\theta)$. 
\end{proposition}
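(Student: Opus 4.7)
The plan is to exhibit the character $|\cdot|_z$ as the minimum of a concrete constructible chain in $\KF_\theta$ with maximum $\theta$, which by definition places $z$ in $\Min(\KB,\theta)$. The Urysohn--Fréchet hypothesis enters only to convert peak-isolatedness of $z$ into a countable nested cofinal family of peak loci shrinking down to $\{z\}$.

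Concretely, I would take a descending countable neighborhood basis $V_1 \supseteq V_2 \supseteq \cdots$ of $z$ and apply Proposition \ref{P: peak point if red saturated}(1) to each $V_n$ to obtain peak functions $g_n \in \KB^{\circ\setminus\circ\circ}$ with $z \in P(|g_n|) \subseteq V_n$. Forming products $f_n := g_1 g_2 \cdots g_n$ keeps these in $\KB^{\circ\setminus\circ\circ}$ (since $|f_n(z)|=1$) and produces a nested sequence $P(|f_n|) = \bigcap_{i \leq n} P(|g_i|)$ with $\bigcap_n P(|f_n|) = \{z\}$ by peak-isolatedness. I then consider
\[ N := \{\theta\} \cup \{\theta_{P(|f_n|)} : n \geq 1\} \cup \{|\cdot|_z\}, \]
which by Proposition \ref{P: comparing partial order for sup norms 2} is a strictly decreasing chain of order type at most $\omega+1$. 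Every nonempty subset of $N$ has a maximum (the member of smallest index, or $\theta$ if present), so $N$ is good.

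For the smoothing condition, set $\alpha_n := \theta_{P(|f_n|)}$ with the convention $\alpha_0 := \theta = \theta_{\KM(\KB)}$. Each non-minimal $\alpha_n$ has predecessor $\alpha_{n+1}$ in $N$; taking $S := S(P(|f_{n+1}|))$, Proposition \ref{P: analytic localization by S} yields directly $\alpha_n^{S} = \alpha_{n+1}$. That $\alpha_n$ is $S$-multiplicative is a short check: any $g, h \in S$ have $|g|$ and $|h|$ constant on $P(|f_{n+1}|) \subseteq P(|f_n|)$ equal to their spectral norms, forcing $\alpha_n(gh) = \|gh\|_{\Sp} = \|g\|_{\Sp}\,\|h\|_{\Sp} = \alpha_n(g)\,\alpha_n(h)$. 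The remaining infimum condition, which applies to $|\cdot|_z$ (the only element not of the form $\alpha_-$, apart from $\theta$ itself which satisfies it vacuously via $\inf\emptyset = \theta$ in $\KF_\theta$), reduces to the pointwise identity
\[ |g(z)| = \lim_n \sup_{P(|f_n|)} |g|, \qquad g \in \KB, \]
which follows from a tube-lemma argument applied to the decreasing family of compacts $P(|f_n|) \cap \{|g| \geq |g(z)| + \varepsilon\}$, using compactness of $\KM(\KB)$ and continuity of $|g|$.

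These verifications together show $N$ is a constructible chain with minimum the multiplicative element $|\cdot|_z$, whence $|\cdot|_z \in \Min(\KB,\theta)$. The main subtlety is not algebraic but topological: everything of substance rests on Proposition \ref{P: analytic localization by S}, while the Urysohn--Fréchet hypothesis is what allows one to arrange a countable nested peak-locus chain along which to iterate the smoothing. Without first-countability one would be driven into a transfinite chain of peak loci indexed by ordinals, and verifying the infimum condition at limit ordinals would require regularity beyond the elementary compactness step above.
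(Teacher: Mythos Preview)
Your overall strategy---building a countable descending chain of peak-locus sup-norms from $\theta$ down to $|\cdot|_z$ and verifying the three constructibility conditions via Proposition~\ref{P: analytic localization by S}---matches the paper's exactly, and your verifications of goodness, $S$-smoothing, and the infimum identity are clean and correct.

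The gap is in the single topological step. You open with ``take a descending countable neighbourhood basis $V_1\supseteq V_2\supseteq\cdots$ of $z$'', but this is first-countability at $z$, which is \emph{strictly stronger} than the Urysohn--Fr\'echet hypothesis. A Fr\'echet--Urysohn space only promises that points in the closure of a set are limits of sequences from that set; it does not promise countable neighbourhood bases. (For instance, the one-point compactification of an uncountable discrete space is compact Hausdorff and Fr\'echet--Urysohn, yet the point at infinity has no countable neighbourhood basis---and one can check it is peak-isolated for the algebra of continuous functions, with every peak locus through it cofinite, so no countable nested family of peak loci can shrink to the singleton.) Your very first line is therefore unjustified under the stated hypothesis, and everything downstream, though internally sound, rests on it.

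The paper exploits the U--F property in the only form it actually delivers: if $z$ is not an isolated point, then $z\in\overline{\{z\}^{\complement}}$, and U--F produces a \emph{sequence} $z_j\to z$ with each $z_j\neq z$. One then chooses the $f_j$ inductively so that $z_j\notin P(|f_{j+1}|)$ and $P(|f_{j+1}|)\subseteq P(|f_j|)$ (using peak-isolatedness to separate $z$ from each $z_j$ by a peak locus, and taking products to enforce nesting, exactly as you do). This sequence-from-the-complement construction is the intended replacement for your neighbourhood-basis step; once it is in place, your downstream checks of conditions (1)--(3) go through verbatim.
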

\begin{proof}
    Let $z$ be such a point, one shall construct a constructible set in $\KF$ relating $\abs{\ndot}_z$ and $\norm{\ndot}_{\Sp}$ and use Theorem \ref{T: minimal boundary as constructible} as follows. Thanks to the intersection property for $P(\abs{\ndot})$, by transfinite induction, one can find a totally ordered set $(I,\prec)$ and a partial-order preserving map $i\mapsto f_i$ to the (opposite of) partially ordered set $(\widetilde{\KB}\setminus\frak{p}, \ndot|\ndot)$, such that $\{P(\abs{f_i})\}_{i\in I}$ is decreasing for $\subseteq$, and $\{z\}=\bigcap_{i\in (I,\prec)}P(\abs{f_i})$. If the set $I$ can be made countable (made isomorphic to $\bbN$), then the following subset of $\KF$ will satisfy all conditions in Theorem \ref{T: minimal boundary as constructible}
    \[N:=\{\theta_{P(\abs{f_i})},i\in I\},\ \theta_{P(\abs{f_i})}:=\sup_{w\in P(\abs{f_i})}\abs{\ndot}_w\]
    To check: (1) holds by Proposition \ref{P: comparing partial order for sup norms 2} and the assumption that $I$ is countable; (2) holds as $(\theta_{P(\abs{f_i})})^{S(P(\abs{f_j}))}=\theta_{P(\abs{f_j})}$ for any $j\prec i$ by Proposition \ref{P: analytic localization by S}; (3) holds trivially as in $N$, only $\theta_{P(\abs{f_0})}$ has no subsequent element and it is the maximal one. Thus we conclude $z\in \Min(\KB,\theta)$. 
    
    To make $I$ countable, one uses the assumption on topology. If $z$ is an isolated point in $\KM(\KB)$, by Proposition \ref{P: peak point if red saturated}(1), there is $f$ such that $P(\abs{f})=\{z\}$, hence the conclusion. Otherwise it is not isolated, so $z$ is in the closure of its complement. As $\{z\}^{\complement}=\bigcup_{z\in P(\abs{f})}P(\abs{f})^{\complement}$, thanks to the Urysohn-Fréchet property, one can choose a sequence of points $\{z_j\}_{j\in \bbN}$ in the complement converging to $z$. Finally, choose inductively $\{f_j\}_{j\in \bbN}$ (for $\bbN\subset I$) accordingly: $f_{j+1}$ is chosen such that $z_j\notin P(\abs{f_{j+1}})$ and $P(\abs{f_{j+1}})\subseteq P(\abs{f_{j}})$. Thus $I$ can be replaced by $\bbN$.
\end{proof}

The important topological property of Berkovich analytic space used above is
\begin{theorem}\label{T: angelique}\cite{Fav}\cite{Poi}
    Berkovich spaces (over n-A fields) are Urysohn-Fréchet spaces, i.e., for any subset, any point in its topological closure can be realized as the limit of a sequence of points in this subset.
\end{theorem}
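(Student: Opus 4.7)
The plan is to reduce the statement to a local question on affinoid/Berkovich spectra and then exploit a topological embedding into a compact Hausdorff space whose sequential properties can be controlled by a countability argument.

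First, I would observe that the Fréchet-Urysohn property is local: given $x$ in the topological closure of a subset $A$ of a Berkovich space $X$, I can replace $X$ by any open neighborhood $U$ of $x$ and work with $A\cap U$ instead. Since every Berkovich analytic space is locally contained in a Berkovich spectrum $\KM(\KA)$ of a uniform Banach $k$-algebra (in the sense of the paper's conventions), it suffices to establish the property for such compact Hausdorff spaces $\KM(\KA)$.

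Second, recall that the topology on $\KM(\KA)$ is the coarsest making each evaluation map $|f|\colon z\mapsto |f(z)|$ continuous, for $f\in\KA$. This gives a topological embedding
\[
\Phi\colon \KM(\KA)\hookrightarrow \prod_{f\in\KA}[0,\norm{f}_{\Sp}],
\]
into a compact Hausdorff space. In the case where $\KA$ has a countable dense subfamily $\KF=\{f_n\}_{n\in\bbN}$ (for instance when the base field $k$ is itself separable), I would construct the desired sequence as follows. Fix $x\in\overline{A}$ and set
\[
U_n := \bigl\{z\in\KM(\KA)\,:\,\bigl||f_i(z)|-|f_i(x)|\bigr|<1/n\text{ for all }i\leq n\bigr\},
\]
a decreasing sequence of open neighborhoods of $x$. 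Since $x\in\overline{A}$, pick $a_n\in A\cap U_n$. By compactness the sequence $(a_n)$ admits a cluster point $y$, and by construction $|f_i(y)|=|f_i(x)|$ for every $i$. Density of $\KF$ together with continuity of each $|f|$ forces $|f(y)|=|f(x)|$ for every $f\in\KA$, whence $y=x$ in $\KM(\KA)$. A further compactness argument then shows that the entire sequence $(a_n)$ in fact converges to $x$, yielding the Fréchet-Urysohn property in this case.

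The main obstacle is that in general neither $k$ nor $\KA$ need be separable, so a countable dense subfamily $\KF$ need not exist; the family of functions used to describe neighborhoods of $x$ may genuinely be uncountable, and the space $\KM(\KA)$ is not first countable. To overcome this I would follow the strategy of Favre and Poineau: instead of separability, exploit the fact that Berkovich spectra are compact Hausdorff spaces that embed canonically into an Eberlein-type compact, namely a bounded subset of a dual space endowed with a weak topology. Combined with Grothendieck's angelicity theorem (which states that weakly compact subsets of Banach spaces are angelic, hence Fréchet-Urysohn), this upgrades the naive argument above to arbitrary Banach algebras $\KA$ without countability restrictions. In practice this amounts to showing that for \emph{each fixed} pair $(x,A)$ only countably many of the coordinates in $\Phi$ are needed to certify $x\in\overline{A}$, and then running the diagonal extraction of the previous paragraph on that countable subfamily; it is this step that requires the delicate results of the cited papers and which I would attribute to them rather than reprove.
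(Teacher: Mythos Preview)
The paper does not prove this theorem: it is quoted as a result from the literature, with citations to Favre \cite{Fav} and Poineau \cite{Poi}, and used as a black box in the corollary that follows. There is therefore no ``paper's own proof'' to compare against; your sketch already goes further than the paper does.

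That said, your outline is a fair summary of the strategy in the cited works: localize to a Berkovich spectrum, view it inside the product $\prod_f [0,\norm{f}_{\Sp}]$, handle the separable case by a diagonal extraction, and in the general case invoke the angelicity machinery. One caveat: Poineau's argument is not literally an application of Grothendieck's theorem on Eberlein compacts, but a direct analysis adapted to Berkovich spectra (reducing via base change and the structure of points to a situation where countably many functions suffice). So your last paragraph is morally right but slightly mislabels the mechanism; if you want to attribute the hard step, cite \cite{Poi} for the precise statement rather than Grothendieck. For the purposes of this paper, simply citing \cite{Fav}\cite{Poi} as the author does is entirely adequate.
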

\begin{corollary}
    If $\KB$ contains a dense finitely generated $k$-algebra $B$, then $\KM(\KB)$ is an Urysohn-Fréchet space.
\end{corollary}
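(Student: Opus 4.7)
The plan is to realise $\KM(\KB)$ topologically as a subspace of the Berkovich analytification $(\spec B)^{\an}$ of the finitely generated $k$-algebra $B$, and then to inherit the Urysohn--Fréchet property from Theorem \ref{T: angelique} using the fact that this property is trivially hereditary for subspaces.

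First I would construct the restriction map $\iota: \KM(\KB) \to (\spec B)^{\an}$ sending a bounded multiplicative semi-norm $\abs{\ndot}_z$ on $\KB$ to its restriction to $B$. This map is continuous since on both sides the topology is the coarsest one making evaluation at elements of $B$ continuous, and $B \subset \KB$. The essential point is that $\iota$ is injective: any $z \in \KM(\KB)$ satisfies $\abs{\ndot}_z \leq \norm{\ndot}_{\Sp}$ and is therefore a continuous functional on $(\KB,\norm{\ndot}_{\Sp})$, so two such semi-norms agreeing on the dense subalgebra $B$ must agree on all of $\KB$.

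Because $\KM(\KB)$ is compact (Berkovich spectrum of a Banach algebra) and $(\spec B)^{\an}$ is Hausdorff, the continuous injection $\iota$ is a homeomorphism onto its image, equipped with the subspace topology. By Theorem \ref{T: angelique} the Berkovich analytic space $(\spec B)^{\an}$ is Urysohn--Fréchet, and this property passes to arbitrary subspaces: if $y$ lies in the closure of $A$ in a subspace $Y \subseteq X$, then $y$ also lies in the closure of $A$ in $X$, so it is the limit of a sequence in $A$, and that same sequence still converges to $y$ in $Y$. Applying this to the subspace $\iota(\KM(\KB)) \subseteq (\spec B)^{\an}$ yields the corollary.

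I do not anticipate a real obstacle here; the argument is a short topological identification. The only point worth double-checking is that Theorem \ref{T: angelique} is indeed applied to the Berkovich analytification of the affine scheme $\spec B$ of finite type over $k$ — not to some affinoid or formal completion — and that $\iota$ sees the full scheme-theoretic spectrum rather than just its Shilov part, so that compactness of $\KM(\KB)$ together with Hausdorffness of the target actually upgrades the continuous injection to a topological embedding.
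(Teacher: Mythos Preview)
Your proof is correct and follows exactly the same approach as the paper's: realise $\KM(\KB)$ as a subspace of $(\spec B)^{\an}$ via the restriction map and inherit the Urysohn--Fréchet property from Theorem~\ref{T: angelique}. The paper states this in two sentences; you have simply spelled out the details (injectivity from density, embedding from compact-to-Hausdorff, heredity of the U--F property), all of which are correct.
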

\begin{proof}
    The natural map $\KM(\KB)\to (\spec B)^{\an}$ realizes the former as a topological subspace of the later, and being an U-F space is inherited by subspaces. 
\end{proof}

\section{Special case: Fubini-Study metric}
As mentioned in the introduction, if the metric $\phi$ is a Fubini-Study metric, then the main result have been obtained with far less complicated methods. We record them here to illustrate better the problem. For simplicity, we shall assume that $L$ is very ample and $s\in R_1$, and that $X$ is irreducible of dimension $d$.



\subsection{Calculation by affinoid algebras}
If $\phi$ is Fubini-Study, then under our assumptions on the base field (see \S1.4), the Banach section algebra $\KR_{\phi}$ is an affinoid algebra. The analytic localization procedure can be performed simply in a finite morphism of affinoid algebras, where all Shilov points are organized in a fixed global Noether normalization. In this case the determinant distorsion can be expressed as the norm in the field arithmetic for such normalization. 
\begin{construction}\label{Cs: Noether normalization section algebra}
    \cite[\S6]{BGR} As $\KR_{\phi}$ is a graded affinoid algebra, by Noether normalization, one can find a finite morphism of affinoid algebras from the Tate algebra $\nu: \KT_{d+1}\to \KR_{\phi}$ which preserves the grading. We shall omit the subscripts and denote these two Banach algebras by $\KT$ and $\KR$. 
    
    The field extension $\Frac(\KT)\to \Frac(\KR)$ is finite, of degre $e$. On $\Frac(\KR)$ there are finitely many inequivalent absolute values $\{\abs{\ndot}_{a}\}$ that are extensions of the Gauss absolute value $\abs{\ndot}_{\gamma}$ on $\KT$. The inequivalent absolute values are in fact indexed by the irreducible components of $\widetilde{\KR}$, hence by $\eta(\widetilde{\KR})$, which is the set of Shilov points. Then there is a decomposition $\norm{f}_{\Sp,\KR}=\max_{a\in \Sh}\abs{f}_a$. 
\end{construction}

\begin{construction}\label{Cs: determinant as field norm}
    \cite[\S3]{BGR} For any $a$, denote by $\KR_a$ and $\Frac(\KR)_a$ the completions with respect to $\abs{\ndot}_a$, then the induced extension of complete valued fields $\nu_a:\Frac(\KT)\to \Frac(\KR)_a$ is still finite, of degree $e_a$, and one has $\sum e_a=e$. Denote by $\nu$ and by $\nu_a$ the (algebra) norm maps $\Frac(\KR)\to \Frac(\KT)$ and $\Frac(\KR)_a\to \Frac(\KT)$. The $\Frac(\KT)$-module $\oplus_a \Frac(\KR)_a$ is finite. There are equalities $\nu(f)=\prod_a\nu_a(f)$ and $\abs{\nu_a(f)}_{\gamma}=\abs{f}_a^{e_a}$, thus a decomposition $\abs{\nu(f)}_{\gamma}=\prod_a\abs{f}_a^{e_a}$. 

    Denote by $\KR'_a$ the normalization of the image of $\KR$ in $\Frac(\KR)_a$. Then $\oplus_a\KR'_a$ is a finite $\KT$-module; moreover it is still graded. The $k$-codimension of $\KR_{\leq n}$ in $(\oplus_a\KR'_a)_{\leq n}$ is of order $O(n^d)$, with $\lim\frac{1}{\upsilon(n)}\dim_k (\oplus_a\KR'_a)_{\leq n}=e_a$ where $\upsilon(n)=\sum_{i\leq n} \chi(i)$ is the $k$-linear dimension of $\KR_{\leq n}$.
\end{construction}

\begin{remark}
    As in the general case, one can calculate the limit of the `determinant norm' of the $k$-linear map $(s\cdot)$ on $\KR$, namely $\frac{1}{\chi(n)}\log \norm{\bigwedge^{\chi(n)}(\cdot s)}_{\hom,\phi}$, on $\oplus_a\KR'_a$ instead of on $\KR$ thanks to Proposition \ref{P: det norm distorsion subspace} and using Proposition \ref{P: VF extension operator norm}. The result being $\sum_a\frac{e_a}{e}\log\abs{s}_a$, is in fact equal to the norm $\log~\abs{\nu(s)}_{\gamma}^{1/e}$. Note that the algebraic norm $\nu(s)$ is just the determinant of the operator $(s\cdot)$, but viewed as a $\Frac(\KT)$-linear map on the vector space $\Frac(\KR)$. Thus this is a normed/metrized/arithmetic version of the formula expressing the algebraic Hilbert-Samuel multiplicity by means of Noether normalization.
    
    This result follows from the viewpoint that for model metrics $\phi$, the measure $\MA(\phi)$ (defined using differential calculus) has a nice local algebraic incarnation, see \cite[\S6]{CLD} and \cite[\S10]{GK}; also, it is compatible with the viewpoint in \cite[\S8]{BE} and \cite{CM}, treating Deligne's metrized intersection pairing $\langle (L,\phi),\dots,(L,\phi)\rangle$ as a norm functor on the affinoid Banach section algebra of $(L,\phi)$ and relating to the Monge-Ampère measure $\MA(\phi)$.

    This is a consequence of the `regularity' of the metric (i.e. being Fubini-Study, or coming from an integral model); for general Shilov finite metrics, we lose the possibility of affinoid Noether normalization, even locally around each Shilov point, hence the argument of \cite[\S6]{CLD} is not directly applicable. Thus our effort in the main body of the article is to relax such a `global regularity' assumption.
\end{remark}

\bibliographystyle{abbrv}

\end{document}